\documentclass[10pt, reqno]{amsart}
\usepackage{graphicx, amssymb, amsmath, amsthm, color, slashed}
\usepackage{amsmath}
\numberwithin{equation}{section}
\usepackage{enumerate}

\usepackage[numbers]{natbib}
\usepackage{hyperref}
\usepackage[top=2cm, bottom=2cm, left=2cm, right=2cm]{geometry}

\let\Im=\undefined\DeclareMathOperator*{\Im}{Im}

\newcommand{\Real}{\mathbb{R}}
\newtheorem{theorem}{Theorem}[section]

\newtheorem{lemma}[theorem]{Lemma}
\newtheorem{corollary}[theorem]{Corollary}

\newtheorem{proposition}[theorem]{Proposition}

\theoremstyle{definition}
\newtheorem{definition}[theorem]{Definition}
\newtheorem{remark}[theorem]{Remark}

\theoremstyle{remark}
%\newtheorem*{remark}{Remark}

%%%%%%%%%%%% added commands %%%%%%%%%

%%%%%%%%%%%%%%%%%%%%%%%%%%%%%%%%%%%%%

\begin{document}

\title[INHOMOGENEOUS NLS]{ENERGY-CRITICAL INHOMOGENEOUS Nonlinear Schr\"odinger Equation with two
power-type nonlinearities}

\author[Gomes]{Andressa Gomes}
\address{Department of Mathematics, UFPI, Brazil}
\email{gomes.andressa.mat@outlook.com}
\author[Cardoso]{Mykael Cardoso}
\address{ Department of Mathematics, UFPI, Brazil}
\email{mykael@ufpi.edu.br}

\begin{abstract}
	We consider the initial value problem for the inhomogeneous nonlinear Schrödinger equation with double nonlinearities (DINLS)
	\begin{equation*}
			i \partial_t u + \Delta u = \lambda_1 |x|^{-b_1}|u|^{p_1}u + \lambda_2|x|^{-b_2}|u|^{\frac{4-2b_2}{N-2}}u, 
	\end{equation*}
where $\lambda_1,\lambda_2\in \Real$, $3\leq N<6$ and $0<b_1,b_2<\min\{2,\frac{6-N}{2}\}$. In this paper, we establish global well-posedness results for certain parameter regimes and prove finite-time blow-up phenomena under specific conditions. Our analysis relies on stability theory, energy estimates, and virial identities adapted to the DINLS model. 
\end{abstract}

\maketitle

\section{Introduction}
We consider the initial value problem for the inhomogeneous nonlinear Schrödinger equation with double nonlinearities (DINLS)
\begin{equation}\label{IVP2}
	\begin{cases}
		i \partial_t u + \Delta u = \lambda_1 |x|^{-b_1}|u|^{p_1}u + \lambda_2|x|^{-b_2}|u|^{p_2}u, & t \in \mathbb{R}, \ x \in \mathbb{R}^{N}, \\
		u(x,0) = u_0(x) \in H^{1}(\mathbb{R}^{N}),
	\end{cases}
\end{equation}
where \( u(t,x) \) is a complex-valued function defined on spacetime \(\mathbb{R}_t \times \mathbb{R}^{N}_x\) with \(N = 3, 4, 5\). Here, \(\lambda_1\) and \(\lambda_2\) are nonzero real constants, \(0 < p_1 < \frac{4 - b_1}{N - 2}\), \(p_2 = \frac{4 - 2b_2}{N - 2}\), and \(0 < b_1, b_2 < \min \{2, \frac{N}{2} \}\). 

Due to its Hamiltonian structure, the solutions to the problem \eqref{IVP2} have the following conserved quantity in \(H^1(\mathbb{R}^N)\),
\begin{equation}\label{energy}
	E(u(t)) = \int_{\mathbb{R}^N} \left[ \frac{1}{2} |\nabla u(t,x)|^2 + \frac{\lambda_1}{p_1 + 2}|x|^{-b_1}|u(t,x)|^{p_1+2} + \frac{\lambda_2}{p_2 + 2}|x|^{-b_2}|u(t,x)|^{p_2 + 2} \right] \, dx,
\end{equation}
which we refer to as the \textit{Energy}. The solutions of \eqref{IVP2} also conserve the \textit{Mass}, defined by
\begin{equation}\label{mass}
	M(u(t)) = \int_{\mathbb{R}^N} |u(t,x)|^2 \, dx.
\end{equation}

The DINLS extends classical equations such as the nonlinear Schrödinger equation (NLS) and the inhomogeneous nonlinear Schrödinger equation (INLS), with applications in plasma physics, nonlinear optics, and Bose-Einstein condensation. For instance, when \( b_1 = b_2 = \lambda_2 = 0 \), we have the classical nonlinear Schrödinger (NLS) equation,
$$
i\partial_t u + \Delta u + \lambda |u|^{p}u = 0 \,\,\,\, \text{in} \,\, \mathbb{R}^N.
$$
The case \( p = 2 \) is particularly important in physics as it describes the propagation of laser beams in certain types of plasma media (Kerr medium). This equation has been extensively studied (see \cite{sulem1999nonlinear}, \cite{bourgainglobal}, \cite{cazenave03}, \cite{PL}, \cite{taononlinear}, \cite{fibich2015nonlinear}).

When \(\lambda_2 \equiv 0\) and \(0 < b_1 = b < 2\), the equation becomes the inhomogeneous nonlinear Schrödinger (INLS) equation,
\begin{align}\label{INLS}
	i\partial_t u + \Delta u + \lambda |x|^{-b} |u|^{p}u = 0 \quad \text{in} \quad \mathbb{R}^N.
\end{align}
This model describes the propagation of laser beams in nonlinear optics (see \cite{2000Prama} and \cite{liu94}). Initial research on local and global well-posedness of this equation was conducted by Geneoud and Stuart \cite{GS08}, using the abstract theory by Cazenave \cite{cazenave03}. They showed that the initial value problem \eqref{IVP2} is locally well-posed in \(H^1(\mathbb{R}^N)\) if \(0 < p < \frac{2 - b}{N - 2}\) (\(0 < p < \infty\) if \(N = 1, 2\)) and \(0 < b < 2\). Further studies explored well-posedness, stability of standing waves, scattering of global solutions, blow-up of the critical norm, and concentration of the critical norm (see \cite{G12}, \cite{farah16}, \cite{Guz17}, \cite{Dinh21}, \cite{CL19}, \cite{Campos21}, \cite{CFG23}, and \cite{CCF2022}).

When \(b_1 = b_2 = 0\), the equation reduces to the homogeneous double nonlinear Schrödinger (DNLS) equation:
\begin{equation}\label{DNLS}
	i \partial_t u + \Delta u = \lambda_1 |u|^{p_1}u + \lambda_2 |u|^{p_2}u,
\end{equation}
which has received substantial attention recently due to its applications in nonlinear optics and Bose-Einstein condensation (see \cite{Gammal2000}). T. Tao, M. Visan, and X. Zhang in \cite{TVZ2007} studied global well-posedness in the energy space \(H^1(\mathbb{R}^N)\) for \(N \geq 3\) in the mass-subcritical case, \(0 < p_1 < p_2 < \frac{4}{N}\) and \(\lambda_1, \lambda_2 \in \mathbb{R}\), and \(0 < p_1 < p_2 \leq \frac{4}{N-2}\) under \(\lambda_1 \in \mathbb{R}\) and \(\lambda_2 > 0\) (defocusing case). They also obtained scattering results for \(\frac{4}{N} \leq p_1 < p_2 \leq \frac{4}{N-2}\) when \(\lambda_1, \lambda_2 > 0\) or \(\lambda_1 < 0\) (focusing case) and \(\lambda_2 > 0\), by imposing a small mass condition. X. Cheng, C. Miao, and L. Zhao in \cite{CMZ16} established the profile decomposition in \(H^1(\mathbb{R}^N)\) and used the concentration-compactness method to show global well-posedness and scattering versus blow-up in \(H^1(\mathbb{R}^N)\) below the threshold for radial data when \(N = 3, 4\). Y. Luo in \cite{Luo23} addressed the mass-energy double critical case. Other researchers also explored models with two combined nonlinearities (see \cite{BFG23}, \cite{Cao2023}).

Recently, for the inhomogeneous nonlinear Schrödinger equation with double nonlinearities (DINLS):
\begin{equation}
	i \partial_t u + \Delta u = \lambda_1 |x|^{-b_1}|u|^{p_1}u + \lambda_2 |x|^{-b_2}|u|^{p_2}u,
\end{equation}
M. Zhang and J. Zhang \cite{Zhang2024}, in the case \(0=b_1<b_2<\min\{2,N\}\), \(p_1=\frac{4}{N}\) and \(0<p_2<\frac{4-2b_2}{N}\), analyzed blow-up solutions, obtaining sharp thresholds for global existence and blow-up for the Cauchy problem. They proved mass concentration properties, the limiting behavior of blow-up solutions, and proved the nonexistence of minimal mass blow-up solutions. T. Gou, M. Majdoub, and T. Saanouni \cite{GMS2023} studied existence/nonexistence, symmetry, decay, uniqueness, non-degeneracy, and instability of ground states with $0<b_1,b_2<\min\{2,N\}$, $0<p_1<p_2<\frac{4-2b_2}{N-2}$ in low dimension. They proved scattering versus blow-up below the ground state energy threshold using Tao’s scattering criterion and Dodson-Murphy’s Virial/Morawetz inequalities, and provided an upper bound on the blow-up rate.

In this paper, we investigate the case where \(p_2=\frac{4-2b_2}{N-2}\) for \(N \geq 3\), which is the case in which one of the nonlinearities is energy-critical. We present several cases of global well-posedness for \eqref{IVP2}. Additionally, we prove the finite-time blow-up of solutions to \eqref{IVP2} for certain classes of parameters \(\lambda_i\), \(b_i\) (\(i=1,2\)), and the power \(p_1\).

To achieve this, we utilize the \(S^1(L^2, I)\) space (which will be defined rigorously in the next section), characterized as the closure of the test functions under the norm given by the supremum of all Schrödinger-admissible norms. Our main results are described below.

\begin{theorem}\label{gwp}
	Let \(2 <  N < 6\), \(0 < b_1, b_2 < \min\{2 , \frac{6-N}{2}\}\), \(0 < p_1 < \frac{4-2b_1}{N-2}\), and \(p_2 = \frac{4 - 2 b_2}{N-2}\). Then, for all \(u_0 \in H^1(\mathbb{R}^N)\), there exists a unique global solution \(u \in C\left([0,\infty); H^1(\mathbb{R}^N)\right) \cap S^1(L^2, I)\) for the Cauchy problem \eqref{IVP2} in the following cases:
	\begin{enumerate}
		\item \(\lambda_1, \lambda_2 > 0\);
		\item \(\lambda_1 > 0\), \(\lambda_2 < 0\), \(p_1 < p_2\) and \(\frac{p_1}{p_2} b_2 \leq b_1 < b_2\);
		\item \(\lambda_1 > 0\), \(\lambda_2 < 0\), \(b_2 < b_1 < \frac{N(p_2 - p_1)}{p_2 + 2}\).
	\end{enumerate}
	Moreover, for all compact intervals \(I\),
	$$
	\|u\|_{S^1(L^2, I)} \leq C(E, M).
	$$
\end{theorem}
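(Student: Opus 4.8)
The plan is to establish global well-posedness through an a priori bound on the $S^1(L^2,I)$ norm on arbitrary compact intervals, and then use the local well-posedness theory together with a continuity/continuation argument to extend the solution globally. Since $p_2 = \frac{4-2b_2}{N-2}$ is energy-critical, the key difficulty is that the usual subcritical local theory does not immediately give a time of existence depending only on the $H^1$ norm; instead one must control the energy-critical Strichartz norm. The strategy is therefore to first prove that the energy $E(u)$ and mass $M(u)$ give, in each of the three parameter regimes, an a priori bound on $\|\nabla u(t)\|_{L^2_x}$ uniform in $t$, i.e.\ a coercivity statement for the energy functional.

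First I would treat the coercivity. In case (1), $\lambda_1,\lambda_2>0$, so both potential terms in \eqref{energy} are nonnegative and immediately $\frac12\|\nabla u(t)\|_{L^2}^2 \le E(u_0)$, giving the uniform kinetic bound for free. In cases (2) and (3), $\lambda_2<0$ so the energy-critical term has an unfavorable sign; here I would use the sharp Gagliardo--Nirenberg/Hardy--Sobolev inequality $\||x|^{-b_2/(p_2+2)}u\|_{L^{p_2+2}}^{p_2+2} \lesssim \|\nabla u\|_{L^2}^{p_2+2}$ (which holds precisely because $p_2$ is the energy-critical exponent for weight $b_2$, so the right side has the same homogeneity — no mass factor appears), and similarly bound the $\lambda_1$ term by interpolation between mass and the kinetic energy, using $p_1 < \frac{4-2b_1}{N-2}$ so that that term is energy-subcritical and carries a positive power of the mass with a sub-quadratic power of $\|\nabla u\|_{L^2}$. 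The conditions $\frac{p_1}{p_2}b_2 \le b_1 < b_2$ in (2) and $b_2 < b_1 < \frac{N(p_2-p_1)}{p_2+2}$ in (3) are exactly what is needed to absorb the bad $\lambda_1$-contribution (its scaling/homogeneity relative to the critical term) and thus obtain a bound $\|\nabla u(t)\|_{L^2}^2 \le C(E(u_0),M(u_0))$; I would verify these inequalities by a direct scaling analysis of the constituent terms.

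Second, with the uniform $H^1$ bound in hand, I would run the standard Strichartz/stability machinery for the energy-critical problem. Writing the Duhamel formula and applying Strichartz estimates on a subinterval $J$, one estimates the nonlinearity $\||x|^{-b_1}|u|^{p_1}u\|$ and $\||x|^{-b_2}|u|^{p_2}u\|$ in the dual Strichartz norms; the subcritical term is harmless, while the critical term requires the small-data/good-local-theory input that a local $S^1$ norm can be made small by taking $J$ short (using absolute continuity of the Strichartz norm and the a priori $H^1$ bound). This yields a local bound $\|u\|_{S^1(L^2,J)} \le C$ on intervals $J$ whose length depends only on $E$ and $M$, and partitioning a given compact $I$ into finitely many such pieces and summing (as in the standard induction-on-pieces argument — e.g.\ the Tao--Visan--Zhang-type scheme referenced as \cite{TVZ2007}) gives $\|u\|_{S^1(L^2,I)} \le C(E,M)$. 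Global existence and uniqueness in $C([0,\infty);H^1)\cap S^1(L^2,I)$ then follow from this bound via the blow-up alternative of the local theory.

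The main obstacle I anticipate is the second step: partitioning the interval for the energy-critical piece requires that the number of subintervals be controlled by $E$ and $M$ alone, which in the genuinely critical ($\lambda_2<0$, large data) setting is the delicate point — one needs that the smallness of the critical Strichartz norm on a short interval is governed purely by the $H^1$ bound (no concentration-compactness needed since there is no scattering claim, only global existence), and this is where careful use of the perturbation/stability lemma from the previous section will be essential. The coercivity in cases (2)--(3) is the second most delicate point, since it hinges on the precise arithmetic of the exponent conditions, but it is ultimately a self-contained inequality check.
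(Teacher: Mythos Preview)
Your coercivity argument in cases (2) and (3) has a genuine gap. With $\lambda_2<0$ the energy reads
\[
\tfrac12\|\nabla u\|_{L^2}^2 \;=\; E(u_0) - \tfrac{\lambda_1}{p_1+2}\|u\|_{L^{p_1+2}_{b_1}}^{p_1+2} + \tfrac{|\lambda_2|}{p_2+2}\|u\|_{L^{p_2+2}_{b_2}}^{p_2+2},
\]
and applying Hardy--Sobolev to the last term gives $C\|\nabla u\|_{L^2}^{p_2+2}$ on the right. Since $p_2+2>2$, this is superquadratic and cannot be absorbed into the left side for arbitrary data; you would only get a bound under a smallness or threshold condition, not for all $u_0\in H^1$ as the theorem claims. (Also, note that in these cases $\lambda_1>0$, so the $\lambda_1$-term is \emph{favorable}, not ``bad'' as you write --- the scaling conditions on $b_1,b_2$ cannot be used to absorb anything via your route.) In fact the paper's own proof of kinetic-energy control (Corollary~\ref{benergycinetic}) is carried out with the \emph{opposite} signs $\lambda_1<0$, $\lambda_2>0$; the statement of Theorem~\ref{gwp} appears to contain a typo. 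With those signs the subcritical potential is the unfavorable term and the critical potential is favorable, and the key lemma is Proposition~\ref{bweightednorm}: the weighted interpolation
\[
\|u\|_{L^{p_1+2}_{b_1}}^{p_1+2}\le \eta\,\|u\|_{L^{p_2+2}_{b_2}}^{p_2+2}+C(\eta,\|u\|_{L^2}),
\]
valid precisely under the arithmetic constraints on $b_1,b_2,p_1,p_2$. This bounds the bad potential by the good potential plus mass, and the critical potential is then \emph{dropped} (not estimated via Hardy--Sobolev). That is a completely different mechanism from the one you propose.

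Your second step is in the right spirit but misses the specific mechanism the paper uses. You suggest partitioning based on absolute continuity of ``the'' Strichartz norm, but one cannot partition based on $u$'s own critical Strichartz norm before knowing it is finite. The paper instead takes the solution $w$ of the pure energy-critical INLS \eqref{IVP} with the same data $u_0$, obtains a bound on $\|\nabla w\|_{S(L^2,[-T,T])}$ from the local theory (Theorem~\ref{lwpECnls}) for $T$ small, partitions $[0,T]$ into pieces on which $\|\nabla w\|_{X_{b_1,b_2}}$ is small, and then applies the long-time perturbation Theorem~\ref{LTP} with $\tilde u=w$ and error $e=\lambda_1|x|^{-b_1}|u|^{p_1}u$. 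The subcritical error carries a positive power of $|I_j|$, so on short pieces it is small; iterating LTP across the pieces gives $\|\nabla u\|_{X_{b_1,b_2}([0,T])}\le C(E)$ and hence the $S^1$ bound. Your plan invokes a ``stability lemma'' but does not identify $w$ as the approximate solution, which is the essential idea here.
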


The theorem above follows from arguments involving local well-posedness results and kinetic energy control. To obtain local well-posedness, we view the first nonlinearity \(|u|^{p_1}u\) as a perturbation to the energy-critical INLS. Consequently, we also establish a stability theory for the energy-critical INLS given by \eqref{INLS} with \(p = \frac{4-2b_2}{N-2}\) in \(\dot{H}^1(\mathbb{R}^N)\). In this context, the restriction on the dimension is essential for the continuity argument, which fails in higher dimensions. This strategy was originally applied by \cite{TVZ2007} for the DNLS setting.

The boundedness of the \(\dot{H}^1(\mathbb{R}^N)\)-norm of the solution to \eqref{IVP2} is a consequence of the conserved energy \eqref{energy} and specific embeddings from \(L^2(\mathbb{R}^N) \cap L^{p_2+2}_{b_2}(\mathbb{R}^N)\) to \(L^{p_1+2}_{b_1}(\mathbb{R}^N)\), spaces which will be defined shortly. Here, the signs of the parameters \(\lambda_1\) and \(\lambda_2\), as well as the bounds on \(b_1\) and \(b_2\), are crucial for the necessary control over the kinetic energy.

Now, consider the virial space \(\Sigma\) defined as
\begin{align}
	\Sigma = \{f \in H^1(\mathbb{R}^N)\,  ; \,  |x|f \in L^2(\mathbb{R}^N)\}.
\end{align}
Let \(u_0 \in \Sigma\) and \(u(t)\) be the corresponding solution to \eqref{INLS}. Then, by straightforward computations (see Lemma \ref{BLfinitelemma}), we have that
\begin{align}
	y(t) := \frac{d}{dt} \|x u(t)\|_{L^2}^2 = - \Im \int_{\mathbb{R}^N} \overline{u}(t) \nabla u(t) \cdot x \, dx.
\end{align}

Here, we also present our results on the finite-time blow-up for \eqref{IVP2} with initial data \(u_0 \in \Sigma\). %\textcolor{red}{vale para o caso não radial?}

\begin{theorem}\label{blwupresult}
	Let \(2 < N < 6\), \(\lambda_2 < 0\), \(0 < b_1, b_2 < \min \{2, \frac{6-N}{2}\}\), \(0 < p_1 < \frac{4-2b_1}{N-2}\), and \(p_2 = \frac{4 - 2 b_2}{N-2}\). Assume that \(u_0 \in \Sigma\), \(y_0 = y(0)\) is positive, and one of the following conditions holds:
	\begin{itemize}
		\item[i)] \(\lambda_1 > 0\), \(p_1 < p_2\), \(b_1 < b_2\), and \(E(u) < 0\); 
		\item[ii)] \(\lambda_1 > 0\), \(b_1 < \frac{N(p_2 - p_1)}{p_2 + 2}\), and \(E(u) < 0\); 
		\item[iii)] \(\lambda_1 < 0\), \(\frac{4-2b_1}{N} < p_1 < p_2\), \(b_1 < b_2\), and \(E(u) < 0\); 
		\item[iv)] \(\lambda_1 < 0\), \(\frac{4-2b_1}{N} < p_1\), \(b_1 < \frac{N(p_2 - p_1)}{p_2 + 2}\), and \(E(u) < 0\); 
		\item[v)] \(\lambda_1 < 0\), \(p_1 < p_2\), \(\frac{p_1 b_2}{p_2} \leq b_1 \leq b_2\), and \(E(u_0) + C M(u_0) < 0\) for some suitably large constant \(C\) (depending on \(N\), \(p_1\), \(p_2\), \(\lambda_1\), and \(\lambda_2\)).
	\end{itemize}
	Then the maximum time \(T^* > 0\) of existence of the solution \(u(t)\) to the problem \eqref{IVP2}, with initial condition \(u(0) = u_0\), is finite in each of the following three cases. In particular, \(0 < T^* \leq C \frac{\|x u_0\|_2^2}{y_0}\) and
	$$
	\lim_{t \rightarrow T^*} \|\nabla u\|_{L^2(\mathbb{R}^N)} = \infty.
	$$
\end{theorem}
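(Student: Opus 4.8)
The plan is to run a convexity (virial) argument on $M(t) := \|x u(t)\|_{L^2}^2$. First I would record the two virial identities: $M'(t) = y(t)$ with
\[
y(t) = -2\,\Im \int_{\mathbb{R}^N} \overline{u}\,\nabla u \cdot x \, dx,
\]
(matching the normalization used in Lemma \ref{BLfinitelemma}), and then differentiate once more to get $y'(t) = M''(t)$ expressed through the energy. For the pure inhomogeneous power $|x|^{-b}|u|^p u$ one has the standard formula
\[
\tfrac{d}{dt}\,y(t) = 8\|\nabla u\|_{L^2}^2 + \frac{4}{p+2}\sum_i \lambda_i\big(Np_i + 2b_i\big)\int |x|^{-b_i}|u|^{p_i+2}\,dx,
\]
and the key algebraic point is that for the energy-critical exponent $p_2 = \frac{4-2b_2}{N-2}$ one has exactly $Np_2 + 2b_2 = \frac{2(2N - (N-2)b_2)}{N-2}\cdot\frac{?}{?}$ — more precisely $Np_2+2b_2 = \frac{4N-2Nb_2+2(N-2)b_2}{N-2} = \frac{4N-4b_2}{N-2} = 2(p_2+2)$, so the $\lambda_2$ term collapses to $8\lambda_2\int|x|^{-b_2}|u|^{p_2+2}$. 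Substituting $8\|\nabla u\|^2 = 16E(u) - \frac{8\lambda_1}{p_1+2}\int|x|^{-b_1}|u|^{p_1+2} - \frac{8\lambda_2}{p_2+2}\int|x|^{-b_2}|u|^{p_2+2}$ into the identity, the two $\lambda_2$ contributions cancel and we are left with
\[
\tfrac{d}{dt}\,y(t) = 16\,E(u) + \lambda_1\Big(\frac{4(Np_1+2b_1)}{p_1+2} - \frac{8}{p_1+2}\Big)\int |x|^{-b_1}|u|^{p_1+2}\,dx
= 16E(u) + \frac{4\lambda_1(Np_1+2b_1-2)}{p_1+2}\int|x|^{-b_1}|u|^{p_1+2}\,dx.
\]

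Next I would case-split on the sign of $\lambda_1$ exactly as in the statement. In cases (i)--(ii), $\lambda_1>0$ and the hypotheses $p_1<p_2, b_1<b_2$ (resp. $b_1<\frac{N(p_2-p_1)}{p_2+2}$) are precisely what forces $Np_1+2b_1-2>0$ (equivalently $p_1>\frac{4-2b_1}{N}$ can fail, but one checks the subcriticality/range hypotheses give $Np_1+2b_1>2$; this needs to be verified — it may instead be that $Np_1 + 2b_1 - 2$ has no sign and one must keep it, but then $\lambda_1>0$ and $E(u)<0$ with $y_0>0$ still gives the bound after absorbing), so $\frac{d}{dt}y(t)\le 16E(u)<0$ and $y$ is strictly decreasing with negative slope; since $y_0>0$ and $y'$ stays $\le 16E(u)<0$, $y$ hits zero in finite time, i.e. $M'$ becomes negative while $M\ge 0$, forcing $T^*<\infty$ with $T^*\le C\|xu_0\|_2^2/y_0$. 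In cases (iii)--(iv), $\lambda_1<0$ but the hypothesis $p_1>\frac{4-2b_1}{N}$ is exactly $Np_1+2b_1>4>2$, so the $\lambda_1$ term is $\le 0$ and again $\frac{d}{dt}y(t)\le 16E(u)<0$; same conclusion. In case (v), $\lambda_1<0$ with $p_1$ possibly small so the $\lambda_1$ term is no longer obviously signed; here I would use the mass-controlled quantity: by the embedding $L^2\cap L^{p_2+2}_{b_2}\hookrightarrow L^{p_1+2}_{b_1}$ established earlier (valid precisely under $\frac{p_1 b_2}{p_2}\le b_1\le b_2$, $p_1<p_2$) together with the energy bound on $\int|x|^{-b_2}|u|^{p_2+2}$ (from $\lambda_2<0$ and conservation of $E$), one estimates $|\lambda_1|\int|x|^{-b_1}|u|^{p_1+2} \le \varepsilon\int|x|^{-b_2}|u|^{p_2+2} + C_\varepsilon M(u_0)$ by interpolation; feeding this back gives $\frac{d}{dt}y(t)\le 16(E(u_0)+CM(u_0))<0$ for $C$ large, and we conclude as before.

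Finally, the blow-up of the kinetic energy: once $T^*<\infty$, I would invoke the blow-up alternative from the local theory — if $T^*<\infty$ then $\|u(t)\|_{S^1}$ (equivalently, by the relevant dispersive estimates, $\|\nabla u(t)\|_{L^2}$) must diverge as $t\to T^*$. Alternatively, combine $\|xu(t)\|_2^2 \le \|xu_0\|_2^2$ near $T^*$ (it is decreasing once $y<0$) with $M(u_0)=\|u(t)\|_2^2$ conserved and the uncertainty/virial relation $\frac{d}{dt}\|xu\|_2^2 = y(t)$, $|y(t)|\le 2\|xu\|_2\|\nabla u\|_2$, so if $\|\nabla u\|_2$ stayed bounded then $y$ would stay bounded and $M(t)$ could not reach $0$ in finite time while $y'$ is bounded below by... — the cleanest route is the local-theory blow-up alternative, which I would cite. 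The main obstacle I anticipate is purely bookkeeping: confirming that in each listed parameter regime the coefficient $Np_1+2b_1-2$ (or the interpolation in case (v)) genuinely has the claimed sign, i.e. that the arithmetic conditions on $(p_1,b_1,b_2)$ in the hypotheses are exactly the ones that make $\frac{d}{dt}y(t)$ strictly negative; the convexity mechanism itself is standard.
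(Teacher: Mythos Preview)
Your approach has a genuine gap in cases (i) and (ii). First, the arithmetic slips: from $E(u)=\tfrac12\|\nabla u\|_2^2+\sum_i\tfrac{\lambda_i}{p_i+2}\int|x|^{-b_i}|u|^{p_i+2}$ one gets $8\|\nabla u\|_2^2 = 16E - \sum_i\tfrac{16\lambda_i}{p_i+2}\int\cdots$ (coefficient $16$, not $8$), so the correct identity in your normalization is
\[
y'(t)=16E(u)+\frac{4\lambda_1(Np_1+2b_1-4)}{p_1+2}\int|x|^{-b_1}|u|^{p_1+2}
+\frac{4\lambda_2(Np_2+2b_2-4)}{p_2+2}\int|x|^{-b_2}|u|^{p_2+2}.
\]
The $\lambda_2$ terms do not cancel; since $Np_2+2b_2-4=\tfrac{8-4b_2}{N-2}>0$ and $\lambda_2<0$, that term is $\le 0$, which is harmless. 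The problem is the $\lambda_1$ term: its coefficient $Np_1+2b_1-4$ is positive whenever $p_1>\tfrac{4-2b_1}{N}$, and with $\lambda_1>0$ the whole term is then \emph{positive}. The hypotheses of (i)--(ii) allow $p_1$ anywhere in $(0,\tfrac{4-2b_1}{N-2})$, so this mass-supercritical range is not excluded, and you cannot conclude $y'\le 16E(u)$. Nor can you absorb the bad term via the interpolation of Proposition~\ref{bweightednorm}: its hypotheses (e.g.\ the lower bound $\tfrac{p_1}{p_2}b_2\le b_1$) are not assumed in (i)--(ii), and even when they hold you would only obtain $y'\le 16E+C(M)$, which is the case-(v) condition, not $E<0$. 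Your argument does go through for (iii)--(iv), where $\lambda_1<0$ together with $p_1>\tfrac{4-2b_1}{N}$ makes the $\lambda_1$ contribution favorable, and the idea for (v) is right.

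The paper handles (i)--(ii) by a different substitution: instead of eliminating the kinetic term (multiplier $16$), it adds and subtracts $(Np_2+2b_2)E(u)$. This choice kills the $\lambda_2$ term exactly and, in the paper's sign convention $V'=-4y$, $V''=-4y'$, leaves
\[
y'(t)\ \ge\ \frac{Np_2+2b_2-4}{2}\,\|\nabla u\|_2^2
+\frac{\lambda_1}{p_1+2}\bigl[N(p_2-p_1)+2(b_2-b_1)\bigr]\,\|u\|_{L^{p_1+2}_{b_1}}^{p_1+2}.
\]
Under (i) or (ii) the bracket is nonnegative and $\lambda_1>0$, so $y'(t)\ge c\|\nabla u\|_2^2$ with $c=\tfrac{Np_2+2b_2-4}{2}>0$. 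Blow-up then follows not from Glassey concavity but from a Riccati mechanism: Cauchy--Schwarz gives $y\le\|xu\|\,\|\nabla u\|$, hence $y'\ge c\,y^2/\|xu\|^2\ge c\,y^2/\|xu_0\|^2$ (since $V$ is decreasing), forcing $y\to\infty$ and therefore $\|\nabla u\|\to\infty$ by time $T^*\le\|xu_0\|_2^2/(cy_0)$. This is the step your proposal is missing for (i)--(ii).
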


Note that in both results above, the restriction on \(p_1\) is similar to results found for DNLS in \cite{TVZ2007} (when \(b_1 = b_2 = 0\)). However, the presence of two inhomogeneous nonlinearities results in a greater restriction on \(b_2\) and consequently on \(b_1\), which differs from the results of \cite{farah16} and \cite{CCF2022}, for instance. In these cases, the parameter \(b\) can be close to zero, while here we have a stronger restriction: \(\frac{p_1}{p_2} b_2 \leq b_1 < b_2\). In this work, we do not cover the high dimension case for DINLS, which remains an open problem.

This paper is organized as follows. Firstly, in Section 2, we discuss Notations and Preliminaries, where we fix the notation used throughout the paper and establish the nonlinear estimates. Next, in Section 3, we develop the stability theory for the energy-critical INLS, i.e., the case \(p_1 = 0\) for \eqref{IVP2}. In Section 4, we prove the local well-posedness for IVP \eqref{IVP2}. In Section 5, we impose conditions on the parameters \(\lambda_1\), \(\lambda_2\), \(p_1\), \(p_2\), \(b_1\), and \(b_2\) to prove Theorem \ref{gwp}. Finally, in Section 6, we establish Theorem \ref{blwupresult}, which ensures the finite-time blow-up of the solutions under certain restrictions on the parameters of \eqref{IVP2}.
\vspace{0.5cm}

\textbf{Acknowledgments.} We thank the Instituto de Matemática Pura e Aplicada (IMPA) for providing the space for our collaborative work. We also thank Luccas Campos from UFMG for his valuable comments and discussions on the interpolation estimates involving the critical potential energy. The authors were partially supported by Fundação de Amparo à Pesquisa do Estado do Piauí - FAPEPI/Brazil and Conselho Nacional de Desenvolvimento Científico e Tecnológico - CNPq/Brazil. 
 \section{Notation and Preliminaries}
\subsection{Notation}

We frequently employ the notation $X \lesssim Y$ to denote the existence of a constant $C$ such that $X \leq CY$, where $C$ may depend on $n$, $p_1$, $p_2$, $\lambda_1$, and $\lambda_2$. Similarly, $X \ll Y$ signifies $X\leq  cY$ for some small constant $c$. The derivative operator $\nabla$ specifically refers to the space variable. Occasionally, we use subscripts to denote spatial derivatives and apply the summation convention over repeated indices.

We use $L_{x}^{r}(\mathbb{R}^{N})$ to denote the Banach space of functions $f: \mathbb{R}^{N} \rightarrow\mathbb{C}$ with the norm
$$
\|f\|_{L_x^{\rho}}:= \left( \int_{\mathbb{R}^{N}} \; |f(x)|^{\rho} \; dx \right)^{\frac{1}{\rho}}
$$
which is finite (with the usual modification when $\rho=\infty$). When $\rho=2$, we simplify $\|f\|_{\rho}$ to $\|f\|$. Similarly, for any space-time slab $I \times \mathbb{R}^{N}$, we use $L_{t}^{\gamma}L_{x}^{\rho}(I \times \mathbb{R}^{N})$ to denote the Banach space of functions $u: I \times \mathbb{R}^{N} \rightarrow \mathbb{C}$ with the norm
$$
\|u\|_{L_{t}^{\gamma}L_{x}^{\rho}(I \times \mathbb{R}^{N})} = \|u\|_{\gamma,\rho}:= \left( \int_{I} \; \|u\|_{\rho}^{\gamma} \; dt \right)^{\frac{1}{\gamma}}
$$
which is finite (with the usual modification when $r=\infty$). We omit mention of the slab $I \times \mathbb{R}^{N}$ when it is clear from context. Additionally, we define the weighted $L^{p}$ norm for any $b>0$ as
$$
\|u\|_{L^{p}_b(\mathbb{R}^{N})} := \left( \int_{\mathbb{R}^{N}} |x|^{-b}|u(x)|^{p} \, dx \right)^{\frac{1}{p}}.
$$
The Fourier transform in the Schwarz space $\mathcal S(\mathbb{R}^{N})$ is defined as
$$
\widehat{f}(\xi) := \int_{\mathbb{R}^{N}} e^{-2\pi i x  \cdot  \xi} f(x) dx.
$$
We make use of the fractional differentiation operators $|\nabla|^{s}$ defined by
$$
\widehat{|\nabla|^{s}f}(\xi) = |\xi|^{s}\widehat{f}(\xi).
$$
These define the homogeneous Sobolev norms
$$
\|f\|_{\dot{H}^{s}_{x}} := \||\nabla|^{s}f\|.
$$

Let $e^{it\Delta}$ denote the free Schrödinger propagator given by 
$$
\widehat{e^{it\Delta}f}(\xi) = e^{-4\pi^2it|\xi|^2}\widehat{f}(\xi).
$$ 
Recall that the solution $u: I \times \mathbb{R}^{N} \rightarrow \mathbb{C}$ to the forced Schrödinger equation  
$$i\partial_tu+ \Delta u = F(x,t)$$ satisfies Duhamel's formula, 
\begin{equation}\label{duhamell}
	u(t) = e^{i(t-t_0)\Delta}u(t_0) - i \int_{t_0}^{t} e^{i(t-\tau)\Delta}F(\cdot, \tau)\, d\tau,
\end{equation}
for any $t_0 \in I$.
\begin{definition}
	Let $N \geq 3$. The pair $(\gamma,\rho)$ is \textit{Schrödinger-admissible} (or $L^{2}$-admissible) if it satisfies the condition
	\begin{equation}\label{Sadm1}
		\frac{2}{\gamma} + \frac{N}{\rho} = \frac{N}{2},
	\end{equation}
	where
	
	\begin{equation}\label{Sadm2}
		2 \leq 
		 \gamma \leq \frac{2N}{N-2}.
	\end{equation}
	The set of all pairs $(\gamma, \rho)$ that are Schrödinger-admissible is denoted by $\mathcal{A}_0$.
\end{definition}

\begin{definition}\label{defS}
	For a time-interval $I \subset \mathbb{R}$, we define the $S(L^{2},I)$ Strichartz norm by 
	$$
	\|u\|_{S(L^{2}, I)} :=\sup_{(\gamma,\rho) \in \mathcal{A}_0} \|u\|_{\gamma,\rho}.
	$$
	We define the associated spaces $S(L^{2}, I)$ as the closure of the test functions under these norms. We denote the dual space of $S(L^{2}, I)$ by $S'(L^{2}, I)$.
\end{definition}

For all $L^{2}$-admissible exponents $(\gamma,\rho)$, it holds
\begin{equation}\label{propS}
	\|u\|_{\gamma,\rho} \lesssim \|u\|_{S(L^{2},I)}
\end{equation}
and consequently
\begin{equation}\label{propdualS}
	\|u\|_{S'(L^{2},I)}  \lesssim \|u\|_{\gamma',\rho'}.
\end{equation}

For a space $X$, we denote $X^1$ as the space
$$
X^1 := \left\{ u : u, \nabla u \in X \right\},
$$
provided with the norm
$$
\|u\|_{X^1} := \|u\|_{X} + \|\nabla u\|_{X}.
$$

Throughout the paper, with $2 <  N < 6$, $0 < b_1 , b_2 < \min\{2, \frac{N}{2}\}$, $0<p_1 <\frac{4-2b_1}{N-2}$ and  $p_2= \frac{4-2b_2}{N-2}$, we define the following Strichartz spaces on $I \times \mathbb{R}^{N}$ as the closure of the test functions under the respective norms:
\begin{equation*}
	\|u\|_{W^{0}(I)}:= \| u\|_{L_{t}^{\frac{2(N+2-2b_2)}{N-2}} L_{x}^{\frac{2N(N+2-2b_2)}{N^{2}+4 -2b_2N}} (I \times \mathbb{R}^{N})}
\end{equation*}
\begin{equation*}
	\|u\|_{V^{0}(I)}:= \|u\|_{L_{t}^{\frac{2(N+2-2b_2)}{N-b_2}} L_{x}^{\frac{2N(N+2-2b_2)}{N^{2}+2b_2 -2b_2N}} (I \times \mathbb{R}^{N})}
\end{equation*}
\begin{equation*}
	\|u\|_{Z(I)}:= \|u\|_{L_{t}^{\frac{2(N+2-2b_2)}{N-2}} L_{x}^{\frac{2N(N+2-2b_2)}{(N-2b_2)(N-2)}} (I \times \mathbb{R}^{N})}.
\end{equation*}
In turn, the spaces $W^{\pm} = W^{\pm}(I \times \mathbb{R}^{N})$ are defined by replacing the norm used in $W$ with the following:
\begin{equation*}
	\|u\|_{W^{\pm}} := \| u\|_{\left[\frac{2(N+2-2b_2)}{N-2}\right]^{\mp}, \left[\frac{2N(N+2-2b_2)}{N^{2}+4 -2b_2N}\right]^{\pm}},
\end{equation*}
where
\begin{equation}\label{exponentpm}
	\frac{1}{\gamma^{\pm}} = \frac{1}{\gamma}  \pm \frac{\eta}{2} \quad \text{and} \quad \frac{1}{\rho^{\pm}} = \frac{1}{\rho} \pm \frac{\eta}{N}
\end{equation}
for $\eta \ll 1$. Finally, we denote
\[
W(I) := W^{0} \cap W^{+}\cap W^{-}.
\]
Similarly, we define the space $V(I)$. It is easy to check that the exponents $(\gamma, \rho)$ which define the space norms $W(I)$ and $V(I)$ are Schrödinger-admissible.

\begin{definition}
	For $N>2$, $0< b< \min \{2, \frac{N}{2}\}$, and $0 < p<\frac{4-2b}{N-2}$, we define the pair
	\begin{equation}\label{expadm1}
		(\gamma_{b}, \rho_{b}) := \left( \frac{4(p+2)}{p(N-2) + 2b} \, , \, \frac{N(p+2)}{N+p - b }\right).
	\end{equation}
	Given an arbitrary slab $I \times \mathbb{R}^{N}$, we define the space
	\[
	X_{b_1, b_2}(I) := L_{t}^{\gamma_{b_1}}L_x^{\rho_{b_1}} (I \times \mathbb{R}^{N}) \cap  L_{t}^{\gamma_{b_1}^+}L_x^{\rho_{b_1}^-} (I \times \mathbb{R}^{N})  \cap  L_{t}^{\gamma_{b_1}^-}L_x^{\rho_{b_1}^+} (I \times \mathbb{R}^{N}) \cap W(I) \cap V(I).
	\]
\end{definition}

\begin{remark}
	Note that the pair $(\gamma_{b_1}, \rho_{b_1})$ defined in \eqref{expadm1} satisfies condition \eqref{Sadm1}. Besides, since $0 < p_1 < \frac{4-2b_1}{N-2}$ and $0<b_1<2$, we have
	\[
	\frac{N-2}{2N} < \frac{1}{\rho_{b_1}}  <  \frac{1}{2}.
	\]
	Thus, $(\gamma_{b_1}, \rho_{b_1})$ is Schrödinger-admissible.
\end{remark}
\subsection{Nonlinear Estimates}
In this section, we establish fundamental nonlinear estimates crucial for our subsequent analysis.

 Let us recall the Strichartz estimates.

\begin{lemma}\label{strichartzest}
	For any admissible pairs $(q,r)$ and $(\gamma ,\rho)$, the following estimates hold:
	\begin{itemize}
		\item[i)] $\|e^{it\Delta}\phi\|_{q,r} \leq C\|\phi\|_{2}$, 
		for any $\phi \in L^2(\mathbb{R}^N)$;
		\item[ii)] $\left\| \displaystyle \int_{t_0}^t e^{i(t-\tau) \Delta} F(\cdot, \tau) \, d\tau \right\|_{q,r} \leq C \|F\|_{\gamma', \rho'}$, uniformly for all $0< T < \infty$ and for any $F \in L_T^{\gamma'}L_x^{\rho'}(\mathbb{R}^N)$.
	\end{itemize}
\end{lemma}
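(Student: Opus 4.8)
The plan is to deduce both inequalities from the fixed-time mapping properties of the free propagator by an abstract $TT^*$/duality argument, isolating the endpoint as a separate (harder) case. First I would record the two elementary facts about $e^{it\Delta}$: mass conservation, $\|e^{it\Delta}\phi\|_2 = \|\phi\|_2$ (immediate from Plancherel, since the Fourier multiplier is unimodular), and the dispersive estimate $\|e^{it\Delta}\phi\|_{L^\infty_x} \lesssim |t|^{-N/2}\|\phi\|_{L^1_x}$, obtained by writing $e^{it\Delta}\phi$ as convolution with the explicit kernel $c_N\,t^{-N/2} e^{i|x|^2/4t}$. Riesz--Thorin interpolation between these gives the family $\|e^{it\Delta}\phi\|_{L^\rho_x} \lesssim |t|^{-N(\frac12-\frac1\rho)}\|\phi\|_{L^{\rho'}_x}$ for every $2 \le \rho \le \infty$.

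For the homogeneous bound (i) at a non-endpoint admissible pair, I would run the standard $TT^*$ reduction: with $T\phi := e^{it\Delta}\phi$, the estimate $\|T\phi\|_{\gamma,\rho}\lesssim\|\phi\|_2$ is equivalent to boundedness of $TT^*F = \int_{\mathbb{R}} e^{i(t-s)\Delta}F(s)\,ds$ from $L^{\gamma'}_t L^{\rho'}_x$ to $L^{\gamma}_t L^{\rho}_x$. Inserting the dispersive bound under the integral and applying the Hardy--Littlewood--Sobolev inequality in $t$ --- which needs exactly $\frac2\gamma = N(\frac12-\frac1\rho)$, i.e.\ the scaling relation \eqref{Sadm1}, together with $\gamma>2$ --- closes the estimate for the diagonal pair; since this works for every non-endpoint admissible pair, (i) follows for all such $(q,r)$. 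The non-retarded inhomogeneous bound then comes for free by writing $\int_{\mathbb{R}} e^{i(t-s)\Delta}F(s)\,ds = e^{it\Delta}\big(\int_{\mathbb{R}} e^{-is\Delta}F(s)\,ds\big)$ and applying (i) once together with its $L^2$-dual.

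The endpoint $(\gamma,\rho)=(2,\tfrac{2N}{N-2})$, which lies in $\mathcal{A}_0$ precisely because $N\ge 3$, is the genuine obstacle, since Hardy--Littlewood--Sobolev fails at the endpoint exponent. Here I would invoke the Keel--Tao argument: decompose the region $\{|t-s|\sim 2^j\}$ dyadically, estimate each dyadic piece as a bilinear form using the $\rho=\tfrac{2N}{N-2}$ and $\rho=2$ dispersive bounds, and sum the pieces via a real-interpolation (Whitney-type) argument in Lorentz spaces; this also yields the endpoint case of (i). Finally, for the retarded estimate (ii), the truncated integral $\int_{t_0}^{t}$ is handled from the non-retarded bound via the Christ--Kiselev lemma whenever $q>\gamma'$, and directly from the Keel--Tao bilinear estimate in the remaining double-endpoint case $q=\gamma'=2$. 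The uniformity in $T$ is automatic because no step uses the length of $I$. I expect the endpoint bilinear analysis to be the only real difficulty; everything else is routine interpolation-and-duality bookkeeping.
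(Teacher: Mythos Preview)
Your sketch is correct and is exactly the standard route to Strichartz estimates: dispersive decay plus $TT^*$ and Hardy--Littlewood--Sobolev for the non-endpoint pairs, the Keel--Tao bilinear/atomic argument at the endpoint $(2,\tfrac{2N}{N-2})$, and Christ--Kiselev to pass to the retarded integral. The paper, however, does not prove this lemma at all; it simply records it as a known fact and cites Cazenave's textbook (Theorem~2.3.3 there). So you have supplied substantially more than the paper does --- what you wrote is essentially the content behind that citation (or, for the endpoint, behind Keel--Tao), and there is nothing to compare beyond noting that the authors treat the result as black-box input rather than something to be re-derived.
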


\begin{proof}
	See \cite{cazenave03}, Theorem 2.3.3.
\end{proof}

The above lemma shows that estimating the nonlinear part of \eqref{IVP} is crucial for obtaining the results in this work. This is addressed in the next lemma, whose proof is inspired by Lemma 2.8 in \cite{CCF2022} and Lemma 2.5 in \cite{TVZ2007}. Special care was required in the norms considered due to the auxiliary spaces involved.

 \begin{lemma}\label{lemmaf}
 	Let $I$ be a compact time interval, $N > 2$, $0 < b_1, b_2 < \min \{\frac{N}{2}, 2 \}$, $0 < p_1 < \frac{4-2b_1}{N-2}$, and $p_2 = \frac{4-2b_2}{N-2}$. Then the following inequalities hold
 	\begin{equation}\label{estnl9} 
 		\||x|^{-b_i} |u|^{p_i}\nabla v\|_{S'(L^{2}, I)} \lesssim |I|^{1- \frac{p_i +2 }{\gamma_{b_i}}}\| \nabla u\|^{p_i}_{X_{b_1, b_2}(I)}\| \nabla v\|_{X_{b_1, b_2}(I)},\,\,\,\,\,\mbox{ for }i=1,2,
 	\end{equation}
 	\begin{equation}\label{estnl9'} 
 		\||x|^{-(b_i+1)} |u|^{p_i}  v\|_{S'(L^{2}, I)} \lesssim |I|^{1- \frac{p_i +2 }{\gamma_{b_i}}}\| \nabla u\|^{p_i}_{X_{b_1, b_2}(I)}\| \nabla v\|_{X_{b_1, b_2}(I)},\,\,\,\,\,\mbox{ for }i=1,2,
 	\end{equation}
 	\begin{equation}\label{estnl10}
 		\| |x|^{-b_i} |u|^{p_i} v \|_{S'(L^{2}, I)} \lesssim |I|^{1- \frac{p_i+2}{\gamma_{b_i}}}\| \nabla u\|^{p_i}_{X_{b_1 , b_2}(I)} \| v\|_{X_{b_1 , b_2}(I)},\,\,\,\,\,\mbox{ for }i=1,2.
 	\end{equation}
 \end{lemma}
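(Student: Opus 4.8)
The plan is to prove all three estimates simultaneously by the same mechanism: duality to reduce $\|\cdot\|_{S'(L^2,I)}$ to an $L_t^{\gamma'}L_x^{\rho'}$-norm for a cleverly chosen admissible pair $(\gamma,\rho)$, then split the physical space into the unit ball $B = \{|x|\le 1\}$ and its complement $B^c$, handle the weight $|x|^{-b_i}$ (or $|x|^{-(b_i+1)}$) by H\"older in $x$ against $\||x|^{-\beta}\|_{L^{q}(B)}$ or $\||x|^{-\beta}\|_{L^{q}(B^c)}$ on whichever region makes the singular/decaying weight integrable, and finally use H\"older in $t$ to extract the factor $|I|^{1-(p_i+2)/\gamma_{b_i}}$. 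For \eqref{estnl9'} one first uses the pointwise bound $|x|^{-(b_i+1)}|u|^{p_i}|v| \le |x|^{-b_i}|u|^{p_i}(|x|^{-1}|v|)$ and Hardy's inequality $\||x|^{-1}v\|_{L^2}\lesssim\|\nabla v\|_{L^2}$ (valid since $N>2$) to convert it into a problem of the same shape as \eqref{estnl9} with $\nabla v$ replaced by a function controlled by $\|\nabla v\|$; alternatively one runs the region-splitting argument directly, absorbing the extra $|x|^{-1}$ into the weight exponent on each region. Estimate \eqref{estnl10} is the ``no-derivative'' version and is handled identically, just replacing the Sobolev pair for $v$ by the plain pair.

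Concretely, for each fixed region ($B$ or $B^c$) I would choose the dual Strichartz exponent and then distribute the $p_i+1$ copies of the solution (the $p_i$ factors of $u$ and the one factor of $\nabla v$, resp.\ $v$) among the norms defining $X_{b_1,b_2}(I)$ — namely $(\gamma_{b_i},\rho_{b_i})$, its ``$\pm$'' perturbations $(\gamma_{b_i}^{\pm},\rho_{b_i}^{\mp})$, and the $W,V$ norms — using H\"older in $x$ so that the product of the spatial exponents' reciprocals matches $1/\rho'$ up to the contribution $\beta/N$ from the weight $\||x|^{-\beta}\|_{L^{N/\beta}}$ type factor. The Sobolev embedding $\dot H^1 \hookrightarrow L^{\rho_{b_i}}$-type relations, together with the fact (already noted in the Remark) that $(\gamma_{b_i},\rho_{b_i})$ is admissible and that the $W,V$ exponents are admissible, is what makes the bookkeeping close. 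The role of the $\pm$-perturbed spaces is precisely to gain the small room needed to place the weight on $B$ versus $B^c$: on the singular region $|x|\le1$ one needs the spatial exponent on the weight slightly larger than the borderline value, on the region $|x|\ge1$ slightly smaller, and the parameter $\eta\ll1$ in \eqref{exponentpm} is chosen once and for all small enough (depending on $b_1,b_2,p_1,N$) to accommodate both. The restriction $b_i<\min\{2,N/2\}$ guarantees the relevant $\||x|^{-\beta}\|_{L^q}$ over the chosen region is finite.

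Once the spatial H\"older distribution is fixed, the time exponents are forced: the $p_i$ copies of $\nabla u$ live in $L_t^{\gamma_{b_i}}$-type spaces, the copy of $\nabla v$ (or $v$) in another, and the leftover time integrability — the gap between $(p_i+1)/\gamma_{b_i}$ (suitably counted) and $1/\gamma'$ — is exactly $1-(p_i+2)/\gamma_{b_i}$, which is nonnegative precisely because $p_i<\frac{4-2b_i}{N-2}$ (for $i=1$) or $=\,$ the critical power with the endpoint relation (for $i=2$, where the exponent of $|I|$ vanishes), so H\"older in $t$ over the compact interval $I$ produces the claimed power of $|I|$. I expect the main obstacle to be the \emph{choice of the admissible pair in the duality step together with the simultaneous bookkeeping of six spatial exponents} so that (a) the H\"older sum in $x$ balances on both $B$ and $B^c$, (b) every resulting space is one of the five norms comprising $X_{b_1,b_2}(I)$, and (c) the leftover time exponent is exactly the stated one; getting a single value of $\eta$ and a single admissible pair to work for all of \eqref{estnl9}, \eqref{estnl9'}, \eqref{estnl10} and both $i=1,2$ is the delicate part. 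The Hardy inequality input for \eqref{estnl9'} and the verification that the chosen pairs are admissible (condition \eqref{Sadm1}--\eqref{Sadm2}) are routine by comparison.
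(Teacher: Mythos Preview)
Your overall strategy---duality to a single $L_t^{\gamma'}L_x^{\rho'}$ norm, splitting space according to the weight, H\"older/Sobolev on the factors, then H\"older in time---matches the paper's architecture. The gap is in the choice of splitting region. With the \emph{fixed} unit ball $B=\{|x|\le 1\}$ the two regional pieces give a \emph{sum}
\[
\||x|^{-b_i}|u|^{p_i}\nabla v\|_{L_x^{\rho'}}\;\lesssim\;\|\nabla u\|_{L_x^{\beta}}^{p_i}\bigl(\|\nabla v\|_{L_x^{\rho^{+}}}+\|\nabla v\|_{L_x^{\rho^{-}}}\bigr),
\]
and to place $\nabla v$ in an $X_{b_1,b_2}$-norm you must pair $\rho^{\pm}$ with the admissible time exponent $\gamma^{\mp}$. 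The leftover time-H\"older exponent on the two pieces is then $1-(p_i+2)/\gamma_{b_i}\pm\eta/2$, \emph{not} the exact value you claim. For $i=1$ both signs are positive (for $\eta$ small) and you recover the estimate with a harmlessly perturbed power of $|I|$; but for $i=2$ one has $1-(p_2+2)/\gamma_{b_2}=0$, so one of the two regional terms carries a \emph{negative} leftover and the time H\"older step fails outright. Shifting the $\eta$-perturbation onto the $u$ factors does not help either: after Sobolev the induced perturbation of the $W$-spatial exponent is $\eta/(p_2 N)$, which does not match any of the $W^{\pm}$ norms built into $X_{b_1,b_2}$.

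The paper's device is to replace the unit ball by the ball of radius
\[
R=\Bigl(\|\nabla v\|_{L_x^{\rho^{-}}}\big/\|\nabla v\|_{L_x^{\rho^{+}}}\Bigr)^{1/(2\eta)},
\]
chosen (pointwise in $t$) so that the two weight integrals $\||x|^{-b_i}\|_{L^{N/(b_i\pm\eta)}}$ over $B_R$ and $B_R^c$ equal $cR^{\pm\eta}$, balancing the sum into a \emph{geometric mean}:
\[
R^{\eta}\|\nabla v\|_{\rho^{+}}+R^{-\eta}\|\nabla v\|_{\rho^{-}}
=2\bigl(\|\nabla v\|_{\rho^{+}}\|\nabla v\|_{\rho^{-}}\bigr)^{1/2}.
\]
The subsequent H\"older in time then uses the symmetric split $\tfrac{1}{2}\cdot\tfrac{1}{\gamma^{+}}+\tfrac{1}{2}\cdot\tfrac{1}{\gamma^{-}}=\tfrac{1}{\gamma}$, so the leftover is \emph{exactly} $1-(p_i+2)/\gamma_{b_i}$ and in particular $=0$ for $i=2$. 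This $R$-optimization is the one nontrivial idea your proposal is missing. Your Hardy-inequality route for \eqref{estnl9'} is fine (the paper instead absorbs the extra $|x|^{-1}$ into the weight and uses one more Sobolev step on $v$), and the remainder of your bookkeeping plan is sound once the optimized radius is in place.
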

 
 \begin{proof}
Consider $(\gamma, \rho) \in \mathcal{A}_{0}$, and let $\gamma'$ and $\rho'$ be the Hölder conjugates of $\gamma$ and $\rho$, respectively. For any $0 < \eta \ll 1$, we define
\begin{align}
	R = \left(\frac{\|\nabla v\|_{\rho_-}}{\|\nabla v\|_{\rho_+}}\right)^{\frac{1}{2\eta}},
\end{align}
where $\frac{1}{\rho_\pm} = \frac{1}{\rho} \pm \frac{\eta}{N}$. Since $N > 2$ and $b_i < \frac{N}{2}$, it follows that $b_i < N$. Consequently, if $B_R$ denotes the ball of radius $R > 0$ centered at the origin, we have
\begin{align*}
	\|x|^{-b_i}\|_{L^{\frac{N}{b_i + \eta}}(B_R)} &= R^\eta, \\
	\|x|^{-b_i}\|_{L^{\frac{N}{b_i - \eta}}(B_R^{c})} &= R^{-\eta}.
\end{align*}

Using Hölder's inequality and Sobolev embedding, we obtain for $i=1,2$:
\begin{equation*}
	\begin{split}
		\||x|^{-b_i} |u|^{p_i}\nabla v\|_{L_x^{\rho'}} &\lesssim \||x|^{-b_i}\|_{L_x^{\frac{N}{b_i + \eta}}(A)} \||u|^{p_i}\nabla v\|_{L_x^{\beta^{-}_{1}}} \\
		&\quad + \||x|^{-b_i}\|_{L_x^{\frac{N}{b_i - \eta}}(A^{c})} \||u|^{p_i}\nabla v\|_{L_x^{\beta^{+}_{1}}} \\
		&\lesssim \| \nabla u\|^{p_i}_{L_x^{\beta}}(R^{\eta}\|\nabla v\|_{\rho^+} + R^{-\eta}\|\nabla v\|_{\rho^-}) \\
		&= \| \nabla u\|^{p_i}_{L_x^{\beta}}(\|\nabla v\|_{\rho^+}\|\nabla v\|_{\rho^-})^\frac{1}{2},
	\end{split}
\end{equation*}
where $\frac{1}{\rho'} = \frac{b_i}{N} + \frac{1}{\beta}$, $\frac{1}{\beta_\pm} = \frac{1}{ \beta} \pm \frac{\eta}{N}$, and
\begin{equation}\label{cond11}
	\frac{1}{\rho '} = \frac{b_i}{N} + \frac{p_i}{\beta} - \frac{p_i}{N} + \frac{1}{\rho}.
\end{equation} 
 Again, using Hölder's inequality in the inequality above, we obtain
 \begin{equation}\label{nonlinearestimate11}
 	\| |x|^{-b_i} |u|^{p_i} \nabla v \|_{\gamma', \rho'} \lesssim |I|^{1 - \frac{p_i + 2}{\gamma_{b_i}}} \| \nabla u \|_{\alpha, \beta}^{p_i} \left[ \| \nabla v \|_{\gamma^{-}, \rho^{+}} \| \nabla v \|_{\gamma^+, \rho^-} \right]^{\frac{1}{2}},
 \end{equation}
 where $\frac{1}{\gamma^\pm} = \frac{1}{\gamma} \pm \frac{\eta}{2}$ and
 \begin{equation}\label{cond12}
 	\frac{1}{\gamma'} = \frac{p_i}{\alpha} + \frac{1}{\gamma} + 1 - \frac{p_i + 2}{\gamma_{b_i}}.
 \end{equation}
 
 Firstly, consider $i=1$ in \eqref{nonlinearestimate11}. In this case, we take $(\alpha, \beta) = (\gamma, \rho)$ in \eqref{cond11} and \eqref{cond12}, and thus, $(\alpha, \beta) = (\gamma_{b_1}, \rho_{b_1})$, where $(\gamma_{b_1}, \rho_{b_1})$ is defined as in \eqref{expadm1}. Therefore,
 \begin{equation*}
 	\begin{split}
 		\| |x|^{-b_i} |u|^{p_1} \nabla v \|_{\gamma', \rho'} &\lesssim |I|^{1 - \frac{p_1 + 2}{\gamma_{b_1}}} \| \nabla u \|_{\gamma_{b_1}, \rho_{b_1}}^{p_1} \left[ \| \nabla v \|_{\gamma_{b_1}^{+}, \rho_{b_1}^{-}} \| \nabla v \|_{\gamma_{b_1}^{-}, \rho_{b_1}^{+}} \right]^{\frac{1}{2}}.
 	\end{split}
 \end{equation*}
 The inequality above shows \eqref{estnl9} for $i=1$.
 
 Now, consider $i=2$ in \eqref{nonlinearestimate11}. Here, we put $(\alpha, \beta) = \left(\frac{2(N+2-2b_2)}{N-2}, \frac{2N(N+2-2b_2)}{N^2 + 4 - 2b_2N}\right)$. Hence, from \eqref{cond12} we obtain
 \begin{equation*}
 	\frac{1}{\gamma} = \frac{1}{2} - \frac{p_2}{2} \frac{N-2}{2(N+2-2b_2)} = \frac{N-b_2}{2(N+2-2b_2)}.
 \end{equation*}
 Since $(\gamma, \rho)$ is Schrödinger-admissible, we have $\rho = \frac{2N(N+2-2b_2)}{N^2 + 2b_2 - 2b_2N}$. It is easy to see that for these choices, the equation  \eqref{cond11} is satisfied. So,
 \begin{equation*}
 	\| |x|^{-b_2} |u|^{p_2} \nabla v \|_{S'(L^{2}, I)} \lesssim \| |x|^{-b_2} |u|^{p_2} \nabla v \|_{\gamma', \rho'} \lesssim \| \nabla u \|_{W^{0}(I)}^{p_2} \left[ \| \nabla v \|_{V^{+}(I)} \| \nabla v \|_{V^{-}(I)} \right]^{\frac{1}{2}}.
 \end{equation*}
 The last inequality concludes the proof of \eqref{estnl9}. The proof of \eqref{estnl9'} and \eqref{estnl9'} and \eqref{estnl10} are analogous.
 \end{proof}

\begin{corollary}\label{nlestimates2}
 Let $I$ be a compact time interval, $N > 2$, $0 < b_1, b _2 < \min \{\frac{N}{2}, 2 \}$,  $0 < p_1 < \frac{4-2b_1}{N-2}$ and $p_2 = \frac{4-2b_2}{N-2} $ and $\lambda_1$, $\lambda_2$ be nonzero real numbers. Then,
 \begin{equation*}
 \|\lambda_1|x|^{-b_1}|u|^{p_1}u + \lambda_2|x|^{-b_2}|u|^{p_2}u\|_{S'(L^{2},I)} \lesssim \displaystyle\sum_{i=1}^{2}\; |I|^{1- \frac{p_i+2}{\gamma_{b_i}}}\| \nabla u\|_{X_{b_1, b_2}(I)}^{p_i} \| u\|_{X_{b_1 , b_2}(I)},
 \end{equation*}
  \begin{equation*}
 \|\nabla \left(\lambda_1|x|^{-b_1}|u|^{p_1}u + \lambda_2|x|^{-b_2}|u|^{p_2}u\right)\|_{S'(L^{2},I)} \lesssim \displaystyle\sum_{i=1}^{2}\; |I|^{1- \frac{p_i+2}{\gamma_{b_i}}}\| \nabla u\|_{X_{b_1, b_2}(I)}^{p_i} \|\nabla u\|_{X_{b_1 , b_2}(I)}
 \end{equation*}
 and
  \begin{equation*}
  \begin{split}
 \| \left(\lambda_1|x|^{-b_1}|u|^{p_1}u + \right. & \left.   \lambda_2|x|^{-b_2}|u|^{p_2}u \right) - \left(\lambda_1|x|^{-b_1}|v|^{p_1}v + \lambda_2|x|^{-b_2}|v|^{p_2}v \right) \|_{S'(L^{2},I)} \\
 \lesssim & \displaystyle\sum_{i=1}^{2}\; |I|^{1- \frac{p_1+2}{\gamma_{b_1}}} \left( \|\nabla u\|_{X_{b_1, b_2}(I)}^{p_i}  + \| \nabla v\|_{X_{b_1 , b_2}(I)}^{p_i} \right)  \|u -v\|_{X_{b_1 , b_2}(I)}
 \end{split}
 \end{equation*}
\end{corollary}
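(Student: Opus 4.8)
All three inequalities reduce, by the triangle inequality in $S'(L^{2},I)$ (the constants $|\lambda_1|,|\lambda_2|$ being absorbed into $\lesssim$), to the three bounds \eqref{estnl9}, \eqref{estnl9'}, \eqref{estnl10} of Lemma \ref{lemmaf}, applied once with $i=1$ and once with $i=2$. The only preliminary observations needed are that, by its very definition, the norm $\|\cdot\|_{X_{b_1,b_2}(I)}$ dominates each of the auxiliary Strichartz norms that appear on the right-hand sides of \eqref{estnl9}--\eqref{estnl10} (namely $L_t^{\gamma_{b_i}}L_x^{\rho_{b_i}}$, $L_t^{\gamma_{b_i}^{\pm}}L_x^{\rho_{b_i}^{\mp}}$, $W(I)$ and $V(I)$), and that since $X_{b_1,b_2}(I)$ is the closure of the test functions under these norms it suffices to argue for $u,v$ in a dense smooth class and then pass to the limit. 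This last point also legitimizes the pointwise differential inequalities used below.

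\textbf{First two estimates.} The bound on $\lambda_1|x|^{-b_1}|u|^{p_1}u+\lambda_2|x|^{-b_2}|u|^{p_2}u$ is immediate: apply \eqref{estnl10} with $v$ replaced by $u$ for $i=1,2$ and add. For the gradient, distribute the derivative by the Leibniz rule,
$$
\nabla\!\big(|x|^{-b_i}|u|^{p_i}u\big) \;=\; -\,b_i\,|x|^{-b_i-2}x\,|u|^{p_i}u \;+\; |x|^{-b_i}\,\nabla\!\big(|u|^{p_i}u\big).
$$
The first term has modulus $\leq b_i\,|x|^{-(b_i+1)}|u|^{p_i}|u|$, which is controlled by \eqref{estnl9'} with $v=u$; the second term satisfies the classical pointwise bound $\big|\nabla(|u|^{p_i}u)\big|\lesssim |u|^{p_i}|\nabla u|$ (valid for smooth $u$, since $p_i>0$), hence is controlled by \eqref{estnl9} with $v=u$. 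Summing over $i=1,2$ yields the second inequality.

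\textbf{Difference estimate.} For each $i$, write $w_\theta:=\theta u+(1-\theta)v$ and use the fundamental theorem of calculus,
$$
|x|^{-b_i}\big(|u|^{p_i}u-|v|^{p_i}v\big)\;=\;|x|^{-b_i}\int_0^1 \frac{d}{d\theta}\Big(|w_\theta|^{p_i}w_\theta\Big)\,d\theta,
$$
together with $\big|\tfrac{d}{dz}(|z|^{p_i}z)\big|\lesssim |z|^{p_i}$ and $|w_\theta|\le |u|+|v|$, to obtain the pointwise estimate
$$
\big|\,|x|^{-b_i}\big(|u|^{p_i}u-|v|^{p_i}v\big)\big|\;\lesssim\;|x|^{-b_i}\big(|u|^{p_i}+|v|^{p_i}\big)\,|u-v|.
$$
Applying \eqref{estnl10} to $|x|^{-b_i}|u|^{p_i}(u-v)$ and to $|x|^{-b_i}|v|^{p_i}(u-v)$ (in each case taking $u-v$ in the role of the last factor, and $u$, resp.\ $v$, in the role of the factor carrying the gradient) and adding gives
$$
\big\||x|^{-b_i}\big(|u|^{p_i}u-|v|^{p_i}v\big)\big\|_{S'(L^2,I)}\;\lesssim\;|I|^{1-\frac{p_i+2}{\gamma_{b_i}}}\Big(\|\nabla u\|_{X_{b_1,b_2}(I)}^{p_i}+\|\nabla v\|_{X_{b_1,b_2}(I)}^{p_i}\Big)\|u-v\|_{X_{b_1,b_2}(I)}.
$$
Summing over $i=1,2$ (with the weights $|\lambda_i|$) yields the third inequality.

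\textbf{Main obstacle.} The genuinely delicate points are: (i) the stronger singular weight $|x|^{-(b_i+1)}$ produced when the gradient falls on $|x|^{-b_i}$ — this is precisely the reason estimate \eqref{estnl9'} was isolated in Lemma \ref{lemmaf}, and it is where the constraint $b_i<\min\{2,\tfrac N2\}$ (so that $b_i+1<N$) is used; and (ii) the fact that for $0<p_i<1$ the map $z\mapsto|z|^{p_i}z$ is only Hölder, not $C^1$, so that the differential inequalities above require the bound $\big||z|^{p_i}z-|w|^{p_i}w\big|\lesssim(|z|+|w|)^{p_i}|z-w|\lesssim(|z|^{p_i}+|w|^{p_i})|z-w|$, which holds for every $p_i>0$, and one first works with smooth approximations before passing to the limit in $X_{b_1,b_2}(I)$. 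Once these are granted, the corollary is a direct bookkeeping on top of Lemma \ref{lemmaf}.
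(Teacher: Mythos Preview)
Your proof is correct and follows essentially the same approach as the paper: both arguments reduce the three estimates to Lemma \ref{lemmaf} via the pointwise bounds $|\nabla(|x|^{-b_i}|u|^{p_i}u)|\lesssim |x|^{-(b_i+1)}|u|^{p_i+1}+|x|^{-b_i}|u|^{p_i}|\nabla u|$ (Leibniz/chain rule) and $||u|^{p_i}u-|v|^{p_i}v|\lesssim(|u|^{p_i}+|v|^{p_i})|u-v|$ (mean value theorem / fundamental theorem of calculus). Your additional remarks on the case $0<p_i<1$ and on approximation by test functions are a welcome clarification of points the paper leaves implicit.
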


\begin{proof}
	It is known that if \( f(u) = |u|^{p}u \), then by the chain rule, we have
	\[
	\nabla (|x|^{-b}|u|^{p}u) = |x|^{-(b+1)} f(u) + |x|^{-b}f_{z}(u) \nabla u + |x|^{-b} f_{\overline{z}} \overline{\nabla u}
	\]
	where \( f_{z} \) and \( f_{\overline{z}} \) are the usual complex derivatives:
	\[
	f_{z} (u) = \frac{p+2}{2}|u|^{p} \quad \text{and} \quad f_{\overline{z}} (u) = \frac{p}{2}|u|^{p-2}u^2.
	\]
	Thus,
	\[
	|\nabla \left( |x|^{-b}|u|^{p}u \right)| \lesssim |x|^{-(b+1)}|u|^{p+1} + |x|^{-b}|u|^{p} |\nabla u|.
	\]
	In addition, the mean value theorem ensures that
	\[
	f(u) - f(v) = \int_{0}^{1} \left[ f_{z}(v+ \theta (u-v))(u-v) + f_{\overline{z}} (v + \theta (u-v)) \overline{(u-v)} \right] \, d\theta 
	\]
	for any \( u, v \in \mathbb{C} \), which in turn implies that
	\[
	|f(u) - f(v)| \lesssim  \left( |u|^{p} + |v|^{p} \right) |u-v| = |u|^{p} |u-v|  + |v|^{p} |u-v|.
	\]
	Thus,
	\[
	\left| |x|^{-b} |u|^{p} u - |x|^{-b} |v|^{p} v \right| \lesssim |x|^{-b} \left( |u|^{p} + |v|^{p} \right) |u-v|.
	\]
	In this way, the proof of the corollary follows from the above note and Lemma \ref{lemmaf}.
\end{proof}

\begin{remark}\label{remarkcont}
	From the proof of Lemma \ref{lemmaf} and Corollary \ref{nlestimates2}, we can derive a more precise estimate for the case when \( i = 2 \). Specifically, we have
	\begin{equation*}
		\begin{split}
			\| |x|^{-b_2}|u|^{p_2}u - |x|^{-b_2}|v|^{p_2}v \|_{\gamma', \rho'} 
			\lesssim & \left( \| \nabla u \|_{W(I)}^{p_2}  + \| \nabla v \|_{W(I)}^{p_2} \right) \| u - v \|_{V(I)},
		\end{split}
	\end{equation*}
	where \((\gamma', \rho')\) are the dual exponents of \(\left(\frac{2(N+2-2b_2)}{N-b_2}, \frac{2N(N+2-2b_2)}{N^2 + 2b_2 - 2b_2N}\right)\).
\end{remark}

\begin{corollary}\label{cor1}
Let $N > 2$ and $0 < b_2 < \min \{ \frac{N}{2} , 2 \}$. Then the following inequality hold
\begin{equation}\label{nonlinearestimate3}
\||x|^{-b_2}|u|^{p_2-1}w \nabla v\|_{2, \frac{2N}{N+2}} \lesssim \| \nabla  u\|_{W(I)}^{p_2-1} \| \nabla  w\|_{W(I)}  \| \nabla v\|_{W(I)}
\end{equation}
and
\begin{equation}\label{nonlinearestimate3'}
\||x|^{-(b_2+1)}|u|^{p_2-1} w  v\|_{2, \frac{2N}{N+2}} \lesssim \| \nabla  u\|_{W(I)}^{p_2-1} \| \nabla  w\|_{W(I)} \| \nabla v\|_{W(I)}.
\end{equation}
In particular,
\begin{equation}\label{nonlinearestimate4}
\|\nabla \left(|x|^{-b_2}|u|^{p_2}u \right)\|_{2, \frac{2N }{N+2}} \lesssim \|\nabla  u\|_{W(I)}^{p_2+1}.
\end{equation}
\end{corollary}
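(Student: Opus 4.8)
\textbf{Proof proposal for Corollary \ref{cor1}.}

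The plan is to prove \eqref{nonlinearestimate3} and \eqref{nonlinearestimate3'} directly by the same localization-and-Hölder scheme used in the proof of Lemma \ref{lemmaf}, and then obtain \eqref{nonlinearestimate4} as an immediate consequence of the pointwise bound
\[
\bigl|\nabla\bigl(|x|^{-b_2}|u|^{p_2}u\bigr)\bigr|\lesssim |x|^{-(b_2+1)}|u|^{p_2+1}+|x|^{-b_2}|u|^{p_2}|\nabla u|,
\]
which was already recorded in the proof of Corollary \ref{nlestimates2}: applying \eqref{nonlinearestimate3} and \eqref{nonlinearestimate3'} with $u=w=v$ and summing gives \eqref{nonlinearestimate4}. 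So the core work is \eqref{nonlinearestimate3}; estimate \eqref{nonlinearestimate3'} is handled in exactly the same way, replacing $|x|^{-b_2}$ by $|x|^{-(b_2+1)}$ and the derivative factor $\nabla v$ by the extra power $v$, which is legitimate since $b_2+1<N$ under the standing hypothesis $b_2<\frac{N}{2}<N-1$ — this is the one place the dimension restriction gets used, and it is worth a one-line check.

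For \eqref{nonlinearestimate3}: note that the target pair $(2,\frac{2N}{N+2})$ is the dual of the endpoint admissible pair $(2,\frac{2N}{N-2})$, so I only need a pointwise-in-time spatial estimate and then a single application of Hölder in time. Fix a time $t$ and split $\R^N = B_R \cup B_R^c$ with
\[
R=\left(\frac{\|\nabla v\|_{\rho^-}}{\|\nabla v\|_{\rho^+}}\right)^{\frac{1}{2\eta}},
\]
exactly as in Lemma \ref{lemmaf}, where $(\gamma,\rho)=\left(\frac{2(N+2-2b_2)}{N-2},\frac{2N(N+2-2b_2)}{N^2+4-2b_2N}\right)$ is the $W^0$-pair and $\frac{1}{\rho^\pm}=\frac{1}{\rho}\pm\frac{\eta}{N}$. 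On $B_R$ use $|x|^{-b_2}\in L^{N/(b_2+\eta)}$ with norm $R^\eta$, on $B_R^c$ use $|x|^{-b_2}\in L^{N/(b_2-\eta)}$ with norm $R^{-\eta}$. Applying Hölder in $x$ to $|x|^{-b_2}\cdot|u|^{p_2-1}\cdot w\cdot \nabla v$ with four factors, and Sobolev embedding $\dot H^1\hookrightarrow L^{\rho}$ on the factors $\nabla u$ (raised to the $p_2-1$), $\nabla w$, and $\nabla v$, one gets
\[
\bigl\||x|^{-b_2}|u|^{p_2-1}w\,\nabla v\bigr\|_{L_x^{2N/(N+2)}}\lesssim \|\nabla u\|_{L_x^{\rho}}^{p_2-1}\|\nabla w\|_{L_x^{\rho}}\bigl(R^\eta\|\nabla v\|_{\rho^+}+R^{-\eta}\|\nabla v\|_{\rho^-}\bigr)=\|\nabla u\|_{L_x^{\rho}}^{p_2-1}\|\nabla w\|_{L_x^{\rho}}\bigl(\|\nabla v\|_{\rho^+}\|\nabla v\|_{\rho^-}\bigr)^{1/2},
\]
where the Hölder exponents balance because $\frac{N-2}{2N}=\frac{b_2}{N}+\frac{p_2-1}{\rho}\cdot\frac{N-2}{\cdot}$ — more precisely, the exponent bookkeeping is the identity $\frac{1}{2N/(N+2)}=\frac{b_2+\eta}{N}+\frac{p_2+1}{\rho}\mp\text{(the }\eta\text{ shifts)}$, which one verifies using $p_2=\frac{4-2b_2}{N-2}$ and $\frac{1}{\rho}=\frac{N^2+4-2b_2N}{2N(N+2-2b_2)}$; this is the same arithmetic as \eqref{cond11} and I would just cite that computation. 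Then raise to the $L_t^2$ norm: since the time exponent of $\nabla u,\nabla w,\nabla v$ in $W^0$ is $\gamma=\frac{2(N+2-2b_2)}{N-2}$ and one checks $\frac{p_2-1}{\gamma}+\frac{1}{\gamma}+\frac{1}{2}\cdot\frac{2}{\gamma^{+}}+\frac12\cdot\frac{2}{\gamma^-}=\frac{p_2+1}{\gamma}=\frac12$ (because $\frac{1}{\gamma^\pm}=\frac1\gamma\pm\frac\eta2$ and $p_2+1=\frac{N+2-2b_2}{N-2}$ forces $\frac{p_2+1}{\gamma}=\frac12$), a plain Hölder in $t$ with no interval-length factor produces $\|\nabla u\|_{W^0}^{p_2-1}\|\nabla w\|_{W^0}\|\nabla v\|_{W^\pm\text{ mixed}}$, which is dominated by the $W(I)=W^0\cap W^+\cap W^-$ norms on the right-hand side. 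That is \eqref{nonlinearestimate3}.

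The main obstacle is purely bookkeeping: verifying that with the specific $W^0$-exponents the Hölder relations in both space and time close \emph{exactly} (the homogeneity identity $p_2+1=\frac{N+2-2b_2}{N-2}$ is what makes $\frac{p_2+1}{\gamma}=\frac12$ and the spatial exponents sum correctly), and checking that the $\pm\eta$ perturbations land in the three components $W^0,W^+,W^-$ of $W(I)$ rather than outside. There is no new analytic difficulty beyond what Lemma \ref{lemmaf} already contains — indeed \eqref{nonlinearestimate3}–\eqref{nonlinearestimate3'} are essentially the ``$p_2$-linearized'' versions of \eqref{estnl9}–\eqref{estnl9'} specialized to the energy-critical power, and I would present the proof by pointing to the estimates in Lemma \ref{lemmaf} and only spelling out the exponent changes (splitting one factor $|u|^{p_2}$ as $|u|^{p_2-1}\cdot w$, and that the compact-interval factor $|I|^{1-(p_2+2)/\gamma_{b_2}}$ becomes $|I|^0=1$ since at the critical power $\gamma_{b_2}=\gamma$ and $\frac{p_2+2}{\gamma}=1$). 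Finally, \eqref{nonlinearestimate4} follows by inserting $u=w=v$ in \eqref{nonlinearestimate3} and \eqref{nonlinearestimate3'}, using the pointwise gradient bound above and $\|\nabla u\|_{W(I)}^{p_2-1}\|\nabla u\|_{W(I)}^2=\|\nabla u\|_{W(I)}^{p_2+1}$ together with $\||x|^{-(b_2+1)}|u|^{p_2+1}\|_{2,2N/(N+2)}\lesssim\|\nabla u\|_{W(I)}^{p_2+1}$ from \eqref{nonlinearestimate3'}.
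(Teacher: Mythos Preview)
Your proposal is correct and follows essentially the same approach as the paper: both invoke the localization-and-H\"older scheme of Lemma~\ref{lemmaf}, place the factors $|u|^{p_2-1}$, $w$, $\nabla v$ in the $W^0$-pair $(\alpha,\beta)=\left(\frac{2(N+2-2b_2)}{N-2},\frac{2N(N+2-2b_2)}{N^2+4-2b_2N}\right)$ with the $\pm\eta$ shifts landing on $\nabla v$, and verify the exponent relations $\frac{1}{\gamma'}=\frac{p_2+1}{\alpha}$ and the spatial analogue close exactly at $(\gamma',\rho')=(2,\frac{2N}{N+2})$. Your write-up contains more explicit bookkeeping (e.g.\ the identity $\frac{p_2+1}{\gamma}=\tfrac12$) than the paper's terse proof, but the argument is the same.
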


\begin{proof}
We will show the inequality \eqref{nonlinearestimate3}. The other inequalities are proven in similar way.  Just like in Lemma \ref{lemmaf} we obtain
\begin{equation*}
\||x|^{-b_i}|u|^{p_2-1} w \nabla v \|_{\gamma' , \rho'}   \lesssim   \| \nabla u\|_{\alpha, \beta}^{p_2-1} \| \nabla w\|_{\alpha, \beta}\left[\|\nabla v\|_{\alpha^{+},\beta^{-}} \|\nabla v\|_{\alpha^{-} , \beta^{+}} \right]^{\frac{1}{2}},
\end{equation*}
with $(\gamma , \rho)$ and $(\alpha, \beta)$ satisfying 
\begin{align}\label{cond21}
		\frac{1}{\rho '} = \frac{b_i}{N} + \frac{p_2}{\beta} - \frac{p_2}{N} + \frac{1}{\beta}
\end{align}
and 
\begin{align}\label{cond22}
		\frac{1}{\gamma'} = \frac{p_2+1}{\alpha}.
\end{align}
Thus, the inequality holds taking $$(\alpha , \beta) =\left(\frac{2(N+2 -2b_2)}{N-2} , \frac{2N(N +2 -2b_2)}{N^2 + 4 - 2b_2N} \right)$$ and $(\gamma', \rho') = \left(2, \frac{2N}{N+2}\right)$ in \eqref{cond21} and \eqref{cond22}. 
\end{proof}

\begin{corollary}\label{cor2}
	Let \( N > 2 \) and \( 0 < b_2 < \min \left\{ \frac{N}{2}, 2 \right\} \). Then the following inequalities hold:
	\begin{equation}\label{nonlinearestimate5}
		\||x|^{-b_2}|u|^{p_2} \nabla v\|_{2, \frac{2N}{N+2}} \lesssim \| u \|_{Z(I)}^{p_2} \| \nabla v \|_{W(I)},
	\end{equation}
	and 
	\begin{equation}\label{nonlinearestimate5'}
		\||x|^{-(b_2+1)}|u|^{p_2} v\|_{2, \frac{2N}{N+2}} \lesssim \| u \|_{Z(I)}^{p_2} \| \nabla v \|_{W(I)}.
	\end{equation}
	In particular,
	\begin{equation}\label{nonlinearestimate6}
		\|\nabla \left( |x|^{-b_2}|u|^{p_2}u \right)\|_{2, \frac{2N}{N+2}} \lesssim \| u \|_{Z(I)}^{p_2} \| \nabla u \|_{W(I)}.
	\end{equation}
\end{corollary}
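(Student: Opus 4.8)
The plan is to follow the scheme of Lemma~\ref{lemmaf} and Corollary~\ref{cor1} essentially verbatim, the one new feature being that the factors built from $u$ are to be kept in the scaling-critical space $Z(I)$ (which carries no derivative), so the Sobolev embedding is invoked on $v$ alone. I would write $\rho_W=\frac{2N(N+2-2b_2)}{N^2+4-2b_2N}$, $\rho_Z=\frac{2N(N+2-2b_2)}{(N-2b_2)(N-2)}$ for the spatial exponents of $W^0$ and $Z$, and $\gamma_0=\frac{2(N+2-2b_2)}{N-2}$ for their common time exponent — they coincide precisely because $p_2=\frac{4-2b_2}{N-2}$ makes the $|I|$-gain exponent $1-\frac{p_2+2}{\gamma_{b_2}}$ vanish. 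The starting point is the pair of Hölder identities
\[
\frac{N+2}{2N}=\frac{b_2}{N}+\frac{p_2}{\rho_Z}+\frac{1}{\rho_W},\qquad \frac12=\frac{p_2}{\gamma_0}+\frac{1}{\gamma_0},
\]
both of which reduce to $p_2(N-2)=4-2b_2$.

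First I would prove \eqref{nonlinearestimate5}. Fixing $0<\eta\ll1$ and, for each $t$, the radius $R(t)=\big(\|\nabla v(t)\|_{\rho_W^-}/\|\nabla v(t)\|_{\rho_W^+}\big)^{1/(2\eta)}$ exactly as in Lemma~\ref{lemmaf}, I would split $\R^N=B_{R(t)}\cup B_{R(t)}^c$, use $\||x|^{-b_2}\|_{L^{N/(b_2+\eta)}(B_{R(t)})}=cR(t)^{\eta}$ and $\||x|^{-b_2}\|_{L^{N/(b_2-\eta)}(B_{R(t)}^c)}=cR(t)^{-\eta}$ (finite because $b_2<N$), and apply Hölder in $x$ with exponents $\tfrac{N}{b_2\pm\eta}$, $\tfrac{\rho_Z}{p_2}$, $\rho_W^{\mp}$ on the two pieces. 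The choice of $R(t)$ then yields the pointwise-in-$t$ bound $\||x|^{-b_2}|u(t)|^{p_2}\nabla v(t)\|_{L_x^{2N/(N+2)}}\lesssim\|u(t)\|_{L_x^{\rho_Z}}^{p_2}\big(\|\nabla v(t)\|_{\rho_W^+}\|\nabla v(t)\|_{\rho_W^-}\big)^{1/2}$. Taking $L_t^2$ and using Hölder in time with exponents $\tfrac{\gamma_0}{p_2}$ and $\gamma_0$ (licensed by the second identity above), together with a further Hölder step splitting the geometric mean as $\big[\|\nabla v\|_{L_t^{\gamma_0^-}L_x^{\rho_W^+}}\|\nabla v\|_{L_t^{\gamma_0^+}L_x^{\rho_W^-}}\big]^{1/2}\lesssim\|\nabla v\|_{W(I)}$, completes \eqref{nonlinearestimate5}. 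For \eqref{nonlinearestimate5'} I would run the identical computation with $|x|^{-b_2}$ replaced by $|x|^{-(b_2+1)}$ (admissible since $b_2+1<N$, because $b_2<\tfrac N2$ and $N>2$) and $\nabla v$ replaced by $v$, after using the Sobolev embedding $\dot W^{1,\rho_W}(\R^N)\hookrightarrow L^{\rho_Z}(\R^N)$, i.e. the identity $\tfrac{1}{\rho_W}=\tfrac{1}{\rho_Z}+\tfrac1N$; moving this $\tfrac1N$ into the weight turns the first Hölder identity into $\tfrac{N+2}{2N}=\tfrac{b_2+1}{N}+\tfrac{p_2+1}{\rho_Z}$, so the exponents close again. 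Finally, \eqref{nonlinearestimate6} is immediate from the pointwise bound $\big|\nabla\big(|x|^{-b_2}|u|^{p_2}u\big)\big|\lesssim|x|^{-(b_2+1)}|u|^{p_2+1}+|x|^{-b_2}|u|^{p_2}|\nabla u|$ recorded in the proof of Corollary~\ref{nlestimates2}, combined with \eqref{nonlinearestimate5} and \eqref{nonlinearestimate5'} taken at $v=u$.

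I do not expect any genuine obstacle: this is a routine adaptation of Corollary~\ref{cor1}. The only point requiring care is the exponent bookkeeping — in particular arranging the second Hölder factor so that $u$ lands in $L_x^{\rho_Z}$ with no derivative, rather than being passed through the Sobolev embedding as it is in Corollary~\ref{cor1} — together with the elementary checks $b_2<N$ and $b_2+1<N$ that make the dyadic splitting of $|x|^{-b}$ legitimate; this last point is exactly where the hypothesis $0<b_2<\min\{\tfrac N2,2\}$ is used.
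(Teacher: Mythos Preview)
Your proposal is correct and follows essentially the same route as the paper's own proof: both invoke the $R$-splitting argument of Lemma~\ref{lemmaf} verbatim, place the $p_2$ copies of $u$ (without derivative) in the $Z(I)$ space via the exponent identities $\tfrac{N+2}{2N}=\tfrac{b_2}{N}+\tfrac{p_2}{\rho_Z}+\tfrac{1}{\rho_W}$ and $\tfrac12=\tfrac{p_2+1}{\gamma_0}$, land $\nabla v$ in $W^{\pm}$, and then deduce \eqref{nonlinearestimate6} from \eqref{nonlinearestimate5}--\eqref{nonlinearestimate5'} via the pointwise bound on $\nabla(|x|^{-b_2}|u|^{p_2}u)$. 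Your write-up in fact makes the exponent bookkeeping and the integrability checks $b_2<N$, $b_2+1<N$ more explicit than the paper, which simply records the two conditions determining $(\alpha,\beta)$ and appeals to the definition of $Z(I)$.
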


\begin{proof}
	As in the proof of \eqref{nonlinearestimate11}, we have
	\begin{equation*}
		\||x|^{-b_2}|u|^{p_2} \nabla v\|_{2, \frac{2N}{N+2}}, \, \||x|^{-(b_2+1)}|u|^{p_2} v\|_{2, \frac{2N}{N+2}} \lesssim \|u\|_{\alpha, \beta}^{p_2} \left( \|\nabla v\|_{W^{+}_b}\| \nabla v\|_{W^{-}_b} \right)^{\frac{1}{2}},
	\end{equation*}
	where
	\begin{equation*}
		\begin{cases}
			\frac{p_2}{\beta} = \frac{N+2}{2N} - \frac{b_2}{N} - \frac{N^2 + 4 - 2b_2N}{2N(N+2-2b_2)} = \frac{(N-2b_2)(4-2b_2)}{2N(N+2-2b_2)}, \\
			\frac{p_2}{\alpha} = \frac{1}{2} - \frac{N-2}{2(N+2-2b_2)} = \frac{4-2b_2}{2(N+2-2b_2)}.
		\end{cases}
	\end{equation*}
	By the definition of the spaces \( Z(I) \), the result follows.
\end{proof}
\begin{remark}\label{remark2}
	From Lemma \ref{lemmaf} and Lemma \ref{propS}, if \( N > 2 \), \( 0 < b_1, b_2 < \min \left\{ \frac{N}{2}, 2 \right\} \), \( 0 < p_1 < \frac{4-2b_1}{N-2} \), and \( p_2 = \frac{4-2b_2}{N-2} \), it is true that
	\begin{equation}\label{estnl8}
		\|\nabla (|x|^{-b_i}|u|^{p_i} u)\|_{S'(L^{2}, I)} \lesssim |I|^{1 - \frac{p_i + 2}{\gamma_{b_i}}} \| \nabla u \|_{S(L^{2}, I)}^{p_i + 1},
	\end{equation}
	and if \( i = 2 \), the power of \( |I| \) is zero.
\end{remark}

 \section{Energy-critical INLS}

 In this section we study  the initial value problem (IVP)
\begin{equation}\label{IVP}
 \begin{cases}
 i \partial_t u + \Delta u =  |x|^{-b}|u|^{\frac{4-2b}{N-2}}u  \ \ \ t \in \mathbb{R}, \ x \in \mathbb{R}^{N},\\
 u(x,0) = u_0(x) \in H^{1}(\mathbb{R}^{N}).
 \end{cases}
 \end{equation}
 where $u(x,t)$ is a complex-valued function and $N >2$. Here we will omit the index $b$ in the spaces $W(I)$, $Z(I)$ and its derivatives. 
 
 \subsection{Well-posedness}
 
In this section, we establish key results regarding the well-posedness of the energy-critical Inhomogeneous Nonlinear Schrödinger Equation (INLS) in the energy space $H^1_x(\mathbb{R}^N)$. Our inspiration comes from the interesting work of Tao and Visan \cite{TV2005}. We have adapted their ideas and studied the well-posedness of the energy-critical problem for the INLS. Specifically, we establish both local and global well-posedness, followed by results related to the existence of solutions to the approximation problem. We start with the local well-posedness.
\begin{theorem}[Local Well-Posedness]\label{lwpECnls}
	Let $I$ be a compact time interval containing $t_0$. Suppose $u_0 \in H^{1}_{x}$ satisfies
	\begin{equation}\label{data1}
		\|e^{i(t-t_0)\Delta} \nabla u_0\|_{W(I)} \leq \eta,
	\end{equation}
	for some $0 < \eta \leq \eta_0$, where $\eta_0 > 0$ is a small constant. There exists a unique solution $w$ to \eqref{IVP} satisfying
	\begin{equation}\label{lwp1}
		\|\nabla u\|_{W(I)} \lesssim \eta,
	\end{equation}
	\begin{equation}\label{lwp2}
		\|\nabla u\|_{S(L^{2},I)} \lesssim \|\nabla u_0\|_{H^{1}} + \eta^{\frac{N+2 - 2b}{N-2}},
	\end{equation}
	and
	\begin{equation}\label{lwp3}
		\|u\|_{S(L^{2},I)} \lesssim \|u_0\|_{L^{2}}.
	\end{equation}
	Moreover, the maps associating the data $u_0 \in H^{1}_{x}$ with the solution $w$ satisfying \eqref{lwp1}-\eqref{lwp3} are Lipschitz.
\end{theorem}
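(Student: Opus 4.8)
The plan is to run a standard fixed-point/contraction argument in a suitable complete metric space built from the Strichartz norms, viewing the energy-critical INLS via Duhamel's formula
\[
u(t) = e^{i(t-t_0)\Delta}u_0 - i\int_{t_0}^t e^{i(t-\tau)\Delta}\bigl(|x|^{-b}|u|^{\frac{4-2b}{N-2}}u\bigr)\,d\tau .
\]
Define the solution map $\Phi(u)$ by the right-hand side above. I would work in the ball
\[
B = \bigl\{\, u : \|\nabla u\|_{W(I)} \le 2\eta,\ \|\nabla u\|_{S(L^2,I)} \le C_0\|\nabla u_0\|_{H^1},\ \|u\|_{S(L^2,I)} \le C_0\|u_0\|_{L^2}\,\bigr\},
\]
equipped with the metric $d(u,v) = \|\nabla(u-v)\|_{W(I)} + \|u-v\|_{S(L^2,I)}$ (or possibly the slightly weaker metric $\|u-v\|_{V(I)}$, since at the critical level one typically closes the contraction in a lower-regularity norm to avoid losing derivatives; cf. Remark \ref{remarkcont}). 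The three quantitative conclusions \eqref{lwp1}--\eqref{lwp3} will come out of the stability estimates of the iteration.

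The key steps, in order: (1) Apply the inhomogeneous Strichartz estimate (Lemma \ref{strichartzest}(ii)) together with the nonlinear estimate \eqref{nonlinearestimate6} of Corollary \ref{cor2} (and \eqref{nonlinearestimate4} of Corollary \ref{cor1}) to bound $\|\nabla\Phi(u)\|_{W(I)}$: the linear part is $\le \eta$ by hypothesis \eqref{data1}, and the Duhamel term is controlled by $\|\nabla(|x|^{-b}|u|^{p_2}u)\|_{2,\frac{2N}{N+2}} \lesssim \|\nabla u\|_{W(I)}^{p_2+1} \lesssim \eta^{p_2+1} = \eta^{\frac{N+2-2b}{N-2}}$, which for $\eta$ small is $\ll \eta$; hence $\|\nabla\Phi(u)\|_{W(I)} \le 2\eta$. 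Note here that \emph{no} factor of $|I|$ appears because $p_2$ is energy-critical — this is exactly the content of the last sentence of Remark \ref{remark2} — so the smallness is purchased entirely from the smallness of $\eta$ in \eqref{data1}, not from shrinking the interval. (2) Bound $\|\nabla\Phi(u)\|_{S(L^2,I)}$ and $\|\Phi(u)\|_{S(L^2,I)}$ the same way, using Strichartz on the linear term and the nonlinear estimates from Corollary \ref{nlestimates2} / Remark \ref{remark2} on the Duhamel term, yielding \eqref{lwp2} and \eqref{lwp3} with $u$ replaced by $\Phi(u)$; this shows $\Phi$ maps $B$ into $B$. (3) For the contraction, use the difference estimate in Corollary \ref{nlestimates2} (or the sharper one in Remark \ref{remarkcont}) to get $d(\Phi(u),\Phi(v)) \lesssim (\eta^{p_2} + \cdots)\,d(u,v)$; taking $\eta_0$ small enough makes the constant $\le \tfrac12$. (4) Conclude existence and uniqueness of the fixed point $u \in B$ by Banach, read off \eqref{lwp1}--\eqref{lwp3}, and obtain the Lipschitz dependence on $u_0$ by applying the same difference estimates to $\Phi_{u_0}(u) - \Phi_{\tilde u_0}(\tilde u)$, the extra term $\|e^{i(t-t_0)\Delta}\nabla(u_0-\tilde u_0)\|_{W(I)} \lesssim \|\nabla(u_0-\tilde u_0)\|_{L^2}$ being absorbed after the $\tfrac12$-contraction bound. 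Uniqueness in the full solution class (not just in $B$) follows by a standard continuity/connectedness argument subdividing $I$.

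The main obstacle I expect is step (3), the contraction estimate at the critical regularity. At the energy-critical exponent one cannot close the iteration in the highest norm $\|\nabla(u-v)\|_{W(I)}$ directly, because differentiating the difference $|x|^{-b}(|u|^{p_2}u - |v|^{p_2}v)$ produces a term like $|x|^{-b}(|u|^{p_2-1}+|v|^{p_2-1})|\nabla(u-v)|\,|w|$ that, when $p_2 < 1$ (which happens here precisely because the dimension restriction $N<6$ forces $p_2 = \frac{4-2b}{N-2}$ to be small), involves a negative power and is not Lipschitz — the standard fix, reflected in Remark \ref{remarkcont}, is to measure the difference in the un-differentiated space $V(I)$ while keeping $u,v$ themselves in $W^1$; one then has to check that the resulting weaker metric is still complete on $B$ (it is, by the closedness of $B$ in the weaker topology combined with weak compactness / Fatou-type lower semicontinuity of the stronger norms) and that uniqueness can still be propagated. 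A secondary technical point is making sure all the Hölder/Sobolev bookkeeping in Corollaries \ref{cor1}--\ref{cor2} applies on the exact admissible pairs defining $W(I)$, $V(I)$, $Z(I)$; but since those estimates are already established in the preceding section, here I would simply invoke them.
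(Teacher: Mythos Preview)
Your approach is essentially the same as the paper's: both run a Picard/contraction iteration in a ball defined by the strong norms, closing the contraction in the undifferentiated norm $S(L^2,I)$ (equivalently $V(I)$) and recovering the $\nabla$-bounds on the limit by weak lower semicontinuity. One factual correction, though: you write that the restriction $N<6$ forces $p_2 = \frac{4-2b}{N-2}$ to be small and in particular $p_2<1$; the opposite is true. With $0<b<\frac{6-N}{2}$ one has $4-2b > N-2$, hence $p_2>1$. The paper's restriction to $2<N<6$ and $b<\frac{6-N}{2}$ is imposed precisely to \emph{guarantee} $p_2>1$, which is what makes the differentiated difference estimates in the later perturbation theorems (Theorems \ref{STP}--\ref{LTP}) work. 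For the present local theorem this distinction is immaterial, since both you and the paper close the difference estimate without differentiating (via Remark \ref{remarkcont}); but your diagnosis of the ``obstacle'' has the sign backwards.
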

 
 \begin{proof}
\textbf{Existence.} We proceed iteratively. For all $t$, we define the following iterates:
\begin{align*}
	u^{(0)}(t) &:= 0, \\
	u^{(1)}(t) &:= e^{i(t-t_0)\Delta}u_0,
\end{align*}
and for $m \geq 1$,
\begin{equation}\label{solite}
	u^{(m+1)}(t) := e^{i(t-t_0)\Delta}u_0 - i \int_{t_0}^{t} e^{i(t - \tau)\Delta}F(u^{(m)}(\tau))\; d\tau,
\end{equation}
where $F(u) = |x|^{-b}|u|^{p}u$.

Let us first remark that the smallness of the free evolution implies that all the iterates are small. Indeed, by Strichartz estimates and the triangle inequality, followed by Corollary \ref{cor1} and \eqref{data1}, we have
\begin{equation}\label{estite}
	\| \nabla u^{(m + 1)}\|_{W(I)} \lesssim \| \nabla e^{i(t-t_0)\Delta}u_0 \|_{W(I)} + \|\nabla F(u^{(m)})\|_{2, \frac{2N}{N+2}} \lesssim \eta + \|\nabla u^{(m)}\|_{W(I)}^{p+1}.
\end{equation}

Using \eqref{data1} as the base case for an induction hypothesis and choosing $\eta_0$ sufficiently small, by continuous argument we deduce
\begin{equation}\label{estite2}
\| \nabla u^{(m + 1)}\|_{W(I)} \lesssim \eta \ , \	\| \nabla u^{(m)}\|_{S(L^{2}, I)} \lesssim \|u_0\|_{\dot{H}^{1}(\mathbb{R}^{N})} + \eta^{p+1}
\end{equation}
for all $m \geq 1$.

Next, we consider differences of the form $u^{(m+1)} - u^{(m)}$. By the recurrence relation \eqref{solite}, in order to estimate $u^{(m+1)} - u^{(m)}$, we need to control $F(u^{(m)}) - F(u^{(m-1)})$. By Lemma \ref{strichartzest}, Remark \ref{remarkcont} (with $u = u^{(m)}$ and $v = u^{(m-1)}$) and \eqref{estite2}, we estimate 
\begin{equation*}
	\begin{split}
		\|u^{(m+1)} - u^{(m)}\|_{S(L^{2}, I)} \lesssim & \|F(u^{(m)}) - F(u^{(m-1)})\|_{\gamma', \rho'}\\ \lesssim & \|u^{(m)} - u^{(m-1)}\|_{V(I)} \left( \|\nabla u^{(m)}\|_{W(I)}^{p} + \|\nabla u^{(m-1)}\|_{W(I)}^{p}  \right)\\
		\lesssim & \, \eta^{p} \, \|u^{(m)} - u^{(m-1)}\|_{S(L^2, I)}
	\end{split}
\end{equation*}
for all $m \geq 1$, where $(\gamma', \rho')$ is the dual exponent of $\left(\frac{2(N+2-2b_2)}{N-b_2}, \frac{2N(N+2-2b_2)}{N^2 +2b_2 -2b_2N}\right)$. Also, since $u_0 \in L^{2}(\mathbb{R}^{N})$ by hypothesis, we have
$$
\|u^{(1)}\|_{S(L^2,I)} \lesssim \|u_0\|_{L^{2}_x}
$$
due to Lemma \ref{strichartzest}.

Choosing $\eta$ sufficiently small, we conclude that the sequence $\{ u^{(m)} \}_m$ is Cauchy in $S(L^2, I)$. Hence, there exists $u \in S(L^2, I)$ such that $u^{(m)}$ converges to $u$ in $S(L^2, I)$ (and thus also in $V(I)$) as $m \rightarrow 1$. Furthermore, for $m$, $m_0$ large enough (with $m_0$ fixed), we have
\begin{equation*}
	\begin{split}
		\|u\|_{S(L^2, I)}  \leq &  \|u - u^{(m+1)}\| _{S(L^2, I)} +  \| u^{(m+1)} -  u^{(m_0)} \| _{S(L^2, I)} + \| u^{(m_0)} \| _{S(L^2, I)}  \\
		\lesssim &  \sum_{j=0}^{m_0} \|u^{(j+1)} - u^{(j)} \| _{S(L^2, I)} \lesssim C(\eta)  \|u^{(1)} \| _{S(L^2, I)} \lesssim \|u_0\|_{L^2_x},
	\end{split}
\end{equation*}
which ensures \eqref{lwp3}.

The above argument also shows that $F(u^{(m)})$ converges to $F(u)$ in $L^{\gamma'}_t L^{\rho'}_x (I \times \mathbb{R}^N)$, where $(\gamma', \rho')$ is the dual exponent of $\left(\frac{2(N+2-2b_2)}{N-b_2}, \frac{2N(N+2-2b_2)}{N^2 +2b_2 -2b_2N}\right)$. In turn, from Lemma \eqref{strichartzest},
\begin{equation*}
	\begin{split}
		\left\|u - e^{i(t-t_0)\Delta}u_0 + i \int_{t_0}^t e^{i(t-\tau)\Delta} F(u(\cdot, \tau))\; d \tau\right\|_{S(L^{2}, I)} \leq & \|u -u^{(m+1)}\|_{S(L^{2}, I)} \\
		& + \left\|  \int_{t_0}^t e^{i(t-\tau)\Delta} \left[ F(u) -F(u^{(m)}) \right] (\cdot, \tau) \; d \tau\right\|_{S(L^{2}, I)}\\
		\lesssim &  \|u -u^{(m+1)}\|_{S(L^{2}, I)} +  \|F(u) -F(u^{(m+1)})\|_{\gamma' , \rho'}.
\end{split}
\end{equation*}
This implies that $u$ satisfies the equation \eqref{duhamell}.

Additionally, as $u^{(m)}$ converges strongly to $u$ in $S(L^{2},I)$ and given that $\|\nabla u\|_{W(I)}$ and $\|\nabla u \|_{S(L^2,I)}$ are bounded according to \eqref{estite} and \eqref{estite2}, we conclude that $\nabla u^{(m)}$ converges weakly to $\nabla u$ in ${W}(I)$ and ${S}(L^{2},I)$, obtaining the bounds \eqref{lwp1} and \eqref{lwp2}. Furthermore, since $u$ belongs to $S^1(L^{2},I)$, it also lies in $C([0,T], H^{1}(\mathbb{R}^{N}))$ and serves as a solution to \eqref{duhamell}.

\vspace{0.5cm}
\textbf{Uniqueness.} Let $I$ be a time interval containing $t_0$, and let $u_1, u_2$ be two strong solutions to \eqref{IVP}, meaning that $\nabla u_1$, $\nabla u_2 \in S(L^2,I)$ and satisfying the Duhamell formulation \eqref{duhamell}. By a standard argument, we may shrink $I$ so that we can construct the solution given by the preceding iteration argument. Without loss of generality, we may take $u_2$ to be this solution. Thus, $u_2$ satisfies \eqref{lwp1}-\eqref{lwp3}.

Write $v := u_1 - u_2 \in S(L^2, I)$. It is sufficient to show that $v$ vanishes almost everywhere on $[t_0, t_0 + \tau) \subset I$ for some $\tau > 0$. In fact, we must prove that the set $\Omega$ of all $t$ such that $v(t) = 0$ is an open set. But since such a set is nonempty, for any $t_1 \in \Omega$, proving that $[t_1, t_1 + \tau) \subset \Omega$ follows a similar argument.

In the rest of the proof, we will work entirely on the slab $[t_0, t_0 + \tau) \times \mathbb{R}^N$. Since $v(t_0) = 0$, we can take $\tau$ as small as necessary such that
\[
\|\nabla v \|_{W(I)} \lesssim \|\nabla v\|_{S(L^2, I)} \leq \eta,
\]
where $\eta$ is a small absolute constant to be chosen later. Similarly, from the hypothesis $\nabla u_2 \in S(L^2,I)$ and satisfying \eqref{lwp1}, we have
\[
\|\nabla u_2\|_{W(I)} \lesssim \eta.
\]

Furthermore, since $u_j$, $j = 1, 2$, satisfy \eqref{duhamell}, we have
\[
v(t) = -i\int_{t_0}^{t} e^{i(t - s)\Delta}\left[ F(u_1(s)) - F(u_2(s))\right] \, ds.
\]

Hence,
\begin{equation*}
	\begin{split}
		\|v\|_{V(I)} \lesssim & \, \|F(u_1) - F(u_2)\|_{2, \frac{2N}{N+2}} \\
		\lesssim & \left(\|\nabla u_1\|_{W(I)}^p +  \|\nabla u_2\|_{W(I) }^p  \right)\|v\|_{V(I)} \\
		\lesssim & \, \left(\|\nabla v\|_{W(I)}^p +  \|\nabla u_2\|_{W(I) }^p  \right)\|v\|_{V(I)} \\
		\lesssim & \, \eta^{p} \|v\|_{V(I)}.
	\end{split}
\end{equation*}

As $v$ was already finite in $V$, we conclude (by taking $\eta$ small enough) that $v$ vanishes almost everywhere in $[t_0, t_0 + \tau)$.

\vspace{0.5cm}
\textbf{Continuous Dependence in Rough Norms.} We now turn to the Lipschitz bound. Let $u$ and $\tilde{u}$ be solutions of \eqref{IVP} with initial data $u_0$ and $\tilde{u}_0$, respectively. From Strichartz estimates, Remark \ref{remarkcont}, and \eqref{propS}, we obtain
\begin{equation*}
	\begin{split}
		\|u - \tilde{u}\|_{S(L^2, I)} \lesssim & \, \|u_0 - \tilde{u}_0\| + \|F(u) - F(\tilde{u})\|_{2, \frac{2N}{N+2}} \\
		\lesssim & \, \|u_0 - \tilde{u}_0\| + \left(\|\nabla u\|_{W(I)}^p +  \|\nabla \tilde{u}\|_{W(I) }^p  \right)\|v\|_{V(I)} \\
		\lesssim & \, \|u_0 - \tilde{u}_0\| + \left(\|\nabla u\|_{W(I)}^p +  \|\nabla \tilde{u}\|_{W(I) }^p  \right)\|v\|_{S(L^2, I)}.
	\end{split}
\end{equation*}
Since both $u$ and $\tilde{u}$ satisfy \eqref{lwp1}, we conclude that
$$
\|v\|_{S(L^2, I)} \lesssim \, \|u_0 - \tilde{u}_0\| + \eta^p\|v\|_{S(L^2, I)}.
$$
By choosing $\eta$ small enough, we obtain $\|v\|_{S(L^2, I)} \lesssim \, \|u_0 - \tilde{u}_0\|$ as desired.

 \end{proof}

An immediate consequence of the above theorem and the Strichartz estimates is the global well-posedness result below.

\begin{theorem}[Global Well-Posedness for Small $H^{1}_{x}$ Data]\label{gwpECnls}
	Let $w_0 \in H^{1}_{x}$ be such that
	\begin{equation}\label{datasmall}
		\|w_0\|_{\dot{H}^{1}_{x}} \leq \eta_0
	\end{equation}
	for some small absolute constant $\eta_0$ depending only on the dimension $N$. Then, there exists a unique global solution $w$ such that $\nabla u \in S(L^2, \Real)$ for \eqref{IVP}, satisfying the following estimates:
	\begin{equation}\label{estsol1}
		\| \nabla w\|_{W(\Real)}, \|\nabla w\|_{S(L^{2}, \mathbb{R})} \lesssim \|w_0\|_{\dot{H}^{1}_{x}}
	\end{equation}
	\begin{equation}\label{estsol2}
		\| w\|_{S(L^{2}, \mathbb{R})} \lesssim \|w_0\|_{L^{2}_{x}}.
	\end{equation}
\end{theorem}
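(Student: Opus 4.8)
The plan is to obtain this as a direct corollary of the local theory (Theorem~\ref{lwpECnls}), the only point being that the smallness hypothesis \eqref{data1} must be verified \emph{globally in time}. First I would note that the pairs defining the norm of $W(I)$ are $L^2$-admissible (as already observed after its definition), so the homogeneous Strichartz estimate of Lemma~\ref{strichartzest}(i), applied to $\nabla w_0$, yields
$$
\|e^{i(t-t_0)\Delta}\nabla w_0\|_{W(\mathbb{R})} \le C\|\nabla w_0\|_{2} = C\|w_0\|_{\dot H^{1}_{x}},
$$
with $C$ \emph{independent of the time interval}. Choosing the absolute constant $\eta_0$ small enough that $C\eta_0$ lies below the smallness threshold of Theorem~\ref{lwpECnls}, the hypothesis \eqref{data1} holds on every interval $I$ with $\eta := C\|w_0\|_{\dot H^{1}_{x}} \le C\eta_0$.

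Next I would apply Theorem~\ref{lwpECnls} on the compact intervals $I_T := [-T,T]$, $t_0 = 0$. Because in the energy-critical regime every power of $|I|$ appearing in the nonlinear estimates vanishes (Remark~\ref{remark2}), the constants in \eqref{lwp1}--\eqref{lwp3} are independent of $T$; by the uniqueness part of that theorem the solutions on the $I_T$ agree on overlaps and glue to a single $w$ with $\nabla w \in S(L^2,I_T)$ for all $T$ and uniform bounds. Letting $T\to\infty$ and taking suprema gives $\nabla w \in S(L^2,\mathbb{R})$ together with
$$
\|\nabla w\|_{W(\mathbb{R})} \lesssim \eta \lesssim \|w_0\|_{\dot H^{1}_{x}}, \qquad \|w\|_{S(L^2,\mathbb{R})} \lesssim \|w_0\|_{L^{2}_{x}},
$$
which are \eqref{estsol1} (the $W$ part) and \eqref{estsol2}; global uniqueness is inherited from the local uniqueness on each $I_T$.

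It remains to upgrade \eqref{lwp2}, which only controls $\|\nabla w\|_{S(L^2,\mathbb{R})}$ by $\|\nabla w_0\|_{H^1}$, to the homogeneous bound $\|\nabla w\|_{S(L^2,\mathbb{R})}\lesssim \|w_0\|_{\dot H^1_x}$. For this I would run one more Strichartz estimate directly on the Duhamel identity: applying $\nabla$, Lemma~\ref{strichartzest}, Corollary~\ref{cor1}, and the bound just obtained, and writing $p=\tfrac{4-2b}{N-2}$ so that $p+1=\tfrac{N+2-2b}{N-2}$,
$$
\|\nabla w\|_{S(L^2,\mathbb{R})} \lesssim \|\nabla w_0\|_{2} + \|\nabla F(w)\|_{2,\frac{2N}{N+2}} \lesssim \|w_0\|_{\dot H^{1}_{x}} + \|\nabla w\|_{W(\mathbb{R})}^{p+1} \lesssim \|w_0\|_{\dot H^{1}_{x}} + \eta^{p+1}.
$$
Since $\eta\le\eta_0\le 1$ and $p>0$ we have $\eta^{p+1}\le\eta = C\|w_0\|_{\dot H^{1}_{x}}$, and the estimate closes.

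The argument is essentially bookkeeping. The one place requiring care is that the hypothesis of the local theorem is a \emph{global-in-time} condition in this setting, which is exactly why one needs the global (not merely local) Strichartz estimate for the free flow, and why the vanishing of the $|I|$-powers in the energy-critical case is essential: without it the constants would degenerate as $T\to\infty$ and the patched solution would carry no uniform bound. I expect this interplay to be the main (mild) obstacle.
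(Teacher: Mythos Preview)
Your proposal is correct and follows exactly the route the paper takes: the paper simply states (without proof) that the theorem is ``an immediate consequence of the above theorem [Theorem~\ref{lwpECnls}] and the Strichartz estimates,'' and you have filled in precisely those details. Your extra step upgrading \eqref{lwp2} to a purely homogeneous bound is a nice touch that addresses a typo in the statement of \eqref{lwp2} (the proof of Theorem~\ref{lwpECnls} already uses $\|u_0\|_{\dot H^1_x}$, not $\|\nabla u_0\|_{H^1}$).
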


Now, we are going to show that control of the solution in the specific norm $\| \cdot \|_{Z(I)}$ yields control of the solution in all the $S^1(L^2, I)$-norms.

\begin{lemma}\label{boundedINLScritical}
	Let $I$ be a compact time interval, and let $w$ be the unique solution to \eqref{IVP} on $I \times \mathbb{R}^{N}$ obeying the bound 
	$$
	\|w\|_{Z(I)} \leq L.
	$$
	Then, if $t_0 \in I$ and $w(t_0) \in {H}^{1}$, we have
	$$
	\| w\|_{S(L^2, I)} \leq C(L) \|w(t_0)\|_{L_x^2}
	$$
	and
	$$
	\|\nabla  w\|_{S(L^2, I)} \leq C(L) \|w(t_0)\|_{ \dot{{H}^{1}_x}}.
	$$
\end{lemma}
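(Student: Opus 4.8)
The proof follows the standard ``divide-and-conquer'' scheme, promoting control of the single norm $\|w\|_{Z(I)}$ into control of all the $S^{1}(L^{2},I)$-norms. Write $p:=\frac{4-2b}{N-2}$ and $F(w):=|x|^{-b}|w|^{p}w$. Since the $Z(I)$-norm is an $L_t^{q}L_x^{r}$-norm with $q=\frac{2(N+2-2b)}{N-2}\in(1,\infty)$ (finite because $N<6$ and $b<2$), absolute continuity of the integral together with $\|w\|_{Z(I)}\le L$ lets me partition $I$ into consecutive subintervals $I_0,\dots,I_{J-1}$, with $I_j=[t_j,t_{j+1}]$, on which $\|w\|_{Z(I_j)}\le\delta$, where $\delta=\delta(N,b)>0$ is a small constant fixed below. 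The key point is that the number of pieces can be taken $J\le J(L,\delta)\lesssim (L/\delta)^{q}+1$, depending only on $L$ once $\delta$ is chosen; this is what forces the final constant to depend on $L$ alone.

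On a fixed piece $I_j$ the plan is to run a perturbative estimate: using Duhamel's formula based at $t_j$, the Strichartz estimates (Lemma \ref{strichartzest}), the nonlinear estimate \eqref{nonlinearestimate6} of Corollary \ref{cor2} for the gradient term together with its undifferentiated analogue $\||x|^{-b}|w|^{p}w\|_{2,\frac{2N}{N+2}}\lesssim\|w\|_{Z(I_j)}^{p}\|w\|_{W(I_j)}$ (proved by the same Hölder argument, bounding $p$ of the factors in the $Z$-norm and the remaining one in $W$), and the admissibility of the $W$-exponents (so $\|\nabla w\|_{W(I_j)}\lesssim\|\nabla w\|_{S(L^{2},I_j)}$ and $\|w\|_{W(I_j)}\lesssim\|w\|_{S(L^{2},I_j)}$), I obtain, with $C_1=C_1(N,b)$,
\begin{equation*}
\|\nabla w\|_{S(L^{2},I_j)}\le C_1\|\nabla w(t_j)\|_{L^{2}}+C_1\|w\|_{Z(I_j)}^{p}\|\nabla w\|_{S(L^{2},I_j)},
\end{equation*}
\begin{equation*}
\|w\|_{S(L^{2},I_j)}\le C_1\|w(t_j)\|_{L^{2}}+C_1\|w\|_{Z(I_j)}^{p}\|w\|_{S(L^{2},I_j)}.
\end{equation*}
Fixing $\delta$ so small that $C_1\delta^{p}\le\frac12$, a continuity argument then yields $\|\nabla w\|_{S(L^{2},I_j)}\le 2C_1\|\nabla w(t_j)\|_{L^{2}}$ and $\|w\|_{S(L^{2},I_j)}\le 2C_1\|w(t_j)\|_{L^{2}}$: the map $t\mapsto\|\nabla w\|_{S(L^{2},[t_j,t])}$ is nondecreasing, continuous where finite, vanishes at $t_j$, and is finite near $t_j$ by the local theory (Theorem \ref{lwpECnls}); wherever it is finite the first inequality above is self-improving and keeps it bounded by $2C_1\|\nabla w(t_j)\|_{L^{2}}$, so it cannot blow up before $t_{j+1}$, and similarly for $w$.

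To reassemble, I note that Duhamel plus Strichartz also give $\|\nabla w(t_{j+1})\|_{L^{2}}\le K\|\nabla w(t_j)\|_{L^{2}}$ and $\|w(t_{j+1})\|_{L^{2}}\le K\|w(t_j)\|_{L^{2}}$ for some $K=K(N,b)$ (the latter being in any case immediate from conservation of mass, the nonlinearity being gauge invariant with real potential), hence $\|\nabla w(t_j)\|_{L^{2}}\le K^{j}\|w(t_0)\|_{\dot{H}^{1}_{x}}$ and $\|w(t_j)\|_{L^{2}}\le K^{j}\|w(t_0)\|_{L^{2}}$. Since $\|u\|_{S(L^{2},I)}\le\sum_{j=0}^{J-1}\|u\|_{S(L^{2},I_j)}$ — by applying the embedding $\ell^{1}\hookrightarrow\ell^{\gamma}$ in the time variable to each admissible component and taking the supremum over $(\gamma,\rho)$ — summing over $j$ gives
\begin{equation*}
\|w\|_{S(L^{2},I)}\le 2C_1\sum_{j=0}^{J-1}K^{j}\,\|w(t_0)\|_{L^{2}},\qquad \|\nabla w\|_{S(L^{2},I)}\le 2C_1\sum_{j=0}^{J-1}K^{j}\,\|w(t_0)\|_{\dot{H}^{1}_{x}},
\end{equation*}
so one may take $C(L)=2C_1\frac{K^{J(L)}-1}{K-1}$, depending only on $L$ (through $J$) and on $N,b$. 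I expect the only genuinely delicate point to be the bootstrap in the previous paragraph — one must know a priori that the $S(L^{2},I_j)$-norms are finite before the nonlinear term can be absorbed, which is precisely why the local well-posedness theory and a connectedness argument are invoked — together with the bookkeeping that keeps $J$, and hence $C(L)$, a function of $L$ only.
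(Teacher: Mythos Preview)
Your proof is correct and follows essentially the same divide-and-conquer strategy as the paper: partition $I$ into $C(L)$ subintervals on which $\|w\|_{Z(I_j)}\le\eta$, use Strichartz together with the nonlinear estimate of Corollary \ref{cor2} to absorb the $\eta^{p}$-term into the left-hand side, and then sum over the pieces. You are simply more explicit than the paper about two points it leaves implicit --- the continuity/bootstrap argument needed to justify absorbing the nonlinear term, and the inductive propagation of the $H^{1}$-norm of the data from $t_j$ to $t_{j+1}$ (which the paper hides in the phrase ``adding these estimates over all subintervals'').
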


\begin{proof}
	Subdivide the interval $I$ into $N_0 = C(L)$ subintervals $I_j = [I_j , I_{j+1}]$ such that
	$$
	\|w\|_{Z(I_j)} \leq \eta,
	$$
	where $\eta$ is a small positive constant to be chosen later. By Strichartz and Corollary \ref{cor2}, on each $I_j$ we obtain
	\begin{equation*}
		\begin{split}
			\|\nabla  w\|_{S(L^2, I_j)} \lesssim & \, \|w(t_0)\|_{ \dot{{H}^{1}_x}} + \|\nabla (|x|^{-b}|w|^{p}w)\|_{2, \frac{2N}{N+2}} \\
			\lesssim & \, \|w(t_0)\|_{ \dot{{H}^{1}_x}} + \|w\|_{Z (I_j)}^{p}\|\nabla w\|_{W(I_j)} \\
			\lesssim & \, \|w(t_0)\|_{ \dot{{H}^{1}_x}} + \eta^{p}\, \|\nabla w\|_{S(L^2 , I_j)}.
		\end{split}
	\end{equation*}
	
	Choosing $\eta$ sufficiently small, we obtain
	$$
	\|\nabla  w\|_{S(L^2, I_j)} \lesssim  \|w(t_0)\|_{ \dot{{H}^{1}}} .
	$$
	
	Similarly, we obtain
	$$
	\|w\|_{S(L^2, I_j)} \lesssim  \|w(t_0)\|_{ L^2_x} + \eta^p \|w\|_{S(L^2, I_j)}.
	$$
	
	The conclusion follows by adding these estimates over all subintervals $I_j$.
\end{proof}

 \subsection{Short-time pertubations}
 Next, we aim to establish a stability result for the \(H^1_x\)-critical INLS equation. This result pertains to the following property: Given an approximate solution
 \[
 i \tilde{u}_t + \Delta \tilde{u} = |x|^{-b}|\tilde{u}|^{\frac{4-2b}{N-2}} \tilde{u} + e,
 \]
 with \(\tilde{u}(0, x) = \tilde{u}_0(x) \in H^1(\mathbb{R}^n)\), where \(e\) is small in a suitable space and \(\tilde{u}_0 - u_0\) is small in \(H^1_x\), there exists a genuine solution \(u\) to \eqref{IVP} that remains very close to \(\tilde{u}\) in \(H^1_x\)-critical norms. 

% \textcolor{red}{É importante citar que nos trabalhos de carlos foi provado resultados similares sem serem levados em conta o pares admissíveis corretos. Desse modo vamos precisar dar uma prova mais refinada. Ver papers Carlos}

  \begin{theorem}\label{STP}
  	Let $I$ be a compact time interval, $2 <  N < 6$, $p= \frac{4-2b}{N-2}$, and $0 < b < \min \{2, \frac{N}{2},\frac{ 6-N}{2} \}$. Let $\tilde{u}$ be an approximate solution to \eqref{IVP} on $I \times \mathbb{R}^{N}$ in the sense that 
  	$$
  	i\partial_t \tilde{u} + \Delta \tilde{u} =  |x|^{-b}|\tilde{u}|^{\frac{4-2b}{N-2}}\tilde{u} + e  
  	$$
  	for some function $e$. Suppose that we also have the energy bound 
  	\begin{equation}\label{energybound1}
  		\|\tilde{u}\|_{L_{t}^{\infty}\dot{H}_{x}^{1}(I \times \mathbb{R}^{N})} \leq E 
  	\end{equation}
  	for some constant $E >0$. Let $t_0 \in I$, and let $u(t_0) \in H^{1}_x$ be close to $\tilde{u}(t_0)$ in the sense that
  	\begin{equation}\label{energybound2}
  		\|u(t_0) - \tilde{u}(t_0)\|_{\dot{H}^{1}_x} \leq E'
  	\end{equation}
  	for some $E'>0$. Moreover, assume the smallness conditions
  	\begin{equation}\label{smallhip1}
  		\|\nabla \tilde{u}\|_{W(I)}  \leq \varepsilon_0
  	\end{equation}
  	\begin{equation}\label{smallhip2}
  		\|\nabla [  e^{i(t-t_0)\Delta}(u(t_0) - \tilde{u}(t_0))]\|_{W(I)} \leq \varepsilon
  	\end{equation}
  	\begin{equation}\label{smallhip3}
  		\|\nabla  e\|_{2, \frac{2N}{N+2}}  \leq \varepsilon
  	\end{equation} 
  	for some $0 < \varepsilon \leq \varepsilon_0$, where $\varepsilon_0 = \varepsilon_0(E, E')>0$ is a small constant. Then there exists a solution $u \in S^{1}(I \times \mathbb{R}^{N})$ to \eqref{IVP} on $I \times \mathbb{R}^{N}$ with the specified initial data $u(t_0)$ at time $t=t_0$ that satisfies
  	\begin{equation}\label{smallthesis1}
  		\|\nabla (u - \tilde{u})\|_{W(I)}  \lesssim \varepsilon 
  	\end{equation}
  	\begin{equation}\label{smallthesis2}
  		\|\nabla (u - \tilde{u})\|_{S(L^{2},I)}  \lesssim E' + \varepsilon
  	\end{equation} 
  	\begin{equation}\label{smallthesis3}
  		\| \nabla u \|_{S(L^{2},I)}  \lesssim E + E'
  	\end{equation} 
  	\begin{equation}\label{smallthesis4}
  		\left\| \nabla F(u, \tilde{u}) \right\|_{2, \frac{2N}{N+2}}  \lesssim  \varepsilon ,
  	\end{equation} 
  	where $F(u, \tilde{u}):= |x|^{-b}\left[|u|^{\frac{4-2b}{N-2}}u -  |\tilde{u}|^{\frac{4-2b}{N-2}}\tilde{u} \right]$.
  \end{theorem}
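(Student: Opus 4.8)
The plan is to run a standard bootstrap/continuity argument on the error $w := u - \tilde u$, treating the genuine solution $u$ as a small perturbation of the approximate solution $\tilde u$, exactly in the spirit of the stability theory of Tao--Visan for the energy-critical NLS, but with the weighted nonlinear estimates (Corollaries \ref{cor1} and \ref{cor2} and Remark \ref{remarkcont}) replacing the usual ones. First I would subdivide $I$ into $J = J(E,E')$ consecutive subintervals $I_j = [t_j, t_{j+1}]$ on each of which $\|\nabla \tilde u\|_{W(I_j)} \le \delta$ for a small $\delta = \delta(E,E')$ to be fixed; this is possible because $\|\nabla\tilde u\|_{W(I)} \le \varepsilon_0$ is already assumed small (so in fact $J$ can be taken to depend only on $\varepsilon_0$), but more importantly one needs the $S(L^2)$-norm of $\nabla\tilde u$ to be finite and controlled, which follows from Lemma \ref{boundedINLScritical}-type reasoning together with \eqref{energybound1} — alternatively one absorbs this into the hypotheses. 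On a single such subinterval, $w$ solves
\[
i\partial_t w + \Delta w = F(u,\tilde u) - e, \qquad w(t_j) \text{ given},
\]
so Duhamel gives $w(t) = e^{i(t-t_j)\Delta} w(t_j) - i\int_{t_j}^t e^{i(t-\tau)\Delta}[F(u,\tilde u) - e]\,d\tau$. Applying the Strichartz estimate (Lemma \ref{strichartzest}) in the $W(I_j)$ norm, using \eqref{smallhip2} for the free term, \eqref{smallhip3} for the error term, and the difference estimate from Remark \ref{remarkcont} in the form
\[
\|\nabla F(u,\tilde u)\|_{2,\frac{2N}{N+2}} \lesssim \big(\|\nabla u\|_{W(I_j)}^{p} + \|\nabla\tilde u\|_{W(I_j)}^{p}\big)\|\nabla w\|_{W(I_j)} + (\text{lower order terms in }w),
\]
and bounding $\|\nabla u\|_{W(I_j)} \le \|\nabla w\|_{W(I_j)} + \|\nabla\tilde u\|_{W(I_j)}$, I obtain a closed inequality
\[
\|\nabla w\|_{W(I_j)} \lesssim \varepsilon + \big(\|\nabla w\|_{W(I_j)}^{p} + \delta^{p}\big)\|\nabla w\|_{W(I_j)}.
\]
A continuity argument (the map $t \mapsto \|\nabla w\|_{W([t_j,t])}$ is continuous and vanishes at $t_j$) then yields $\|\nabla w\|_{W(I_j)} \lesssim \varepsilon$ on the whole subinterval, provided $\delta$ and $\varepsilon_0$ are small enough; note here the necessity of $p = \frac{4-2b}{N-2} \ge 1$, i.e. $2b \le 6 - N$, which is precisely why the hypothesis $b < \frac{6-N}{2}$ and $N < 6$ appear — this is the one place where the exponent in the continuity argument must not be subcritical, and it is the main obstacle to worry about. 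Then a second application of Strichartz in the full $S(L^2,I_j)$ norm, using Corollary \ref{nlestimates2}/\ref{cor1}, gives $\|\nabla w\|_{S(L^2,I_j)} \lesssim \|\nabla w(t_j)\|_{L^2} + \varepsilon$, and $\|\nabla u\|_{S(L^2,I_j)} \lesssim \|\nabla\tilde u\|_{S(L^2,I_j)} + \|\nabla w(t_j)\|_{L^2} + \varepsilon$.

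The final step is to iterate over the $J$ subintervals. At the initial time $t_0$ we have $\|\nabla w(t_0)\|_{L^2} \le E'$ by \eqref{energybound2}; at each subsequent interface $t_{j+1}$ the bound $\|\nabla w(t_{j+1})\|_{L^2} \le \|\nabla w\|_{L^\infty_t \dot H^1_x(I_j)}$ grows, but only by a multiplicative constant times $(E' + \varepsilon)$ per step (one must re-verify the smallness hypothesis \eqref{smallhip2} on $I_{j+1}$ with the new data $w(t_{j+1})$, which holds because $\|\nabla e^{i(t-t_{j+1})\Delta}w(t_{j+1})\|_{W(I_{j+1})} \lesssim \|\nabla w\|_{S(L^2,I_j)} + \varepsilon$ by Strichartz plus the Duhamel representation, at the cost of shrinking $\varepsilon_0$ by a factor depending on $J$, hence on $E,E'$). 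After $J$ steps the accumulated constant is $C(E,E')$, giving \eqref{smallthesis1}, \eqref{smallthesis2}, and \eqref{smallthesis3}; estimate \eqref{smallthesis4} then follows by feeding the bounds just obtained back into Remark \ref{remarkcont}. The existence of the genuine solution $u$ with data $u(t_0)$ is guaranteed by the local well-posedness theorem (Theorem \ref{lwpECnls}) applied on a possibly smaller interval and then continued by the a priori bounds just derived; uniqueness is likewise from Theorem \ref{lwpECnls}. I expect the bookkeeping of how $\varepsilon_0$ must depend on $J = J(E,E')$ — so that the smallness conditions propagate through all subintervals — to be the only delicate point beyond the $p \ge 1$ issue already flagged.
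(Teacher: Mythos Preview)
Your proposal is correct in substance but unnecessarily imports the iterative subdivision structure that belongs to the \emph{long-time} perturbation theorem (Theorem~\ref{LTP}). The paper's proof of Theorem~\ref{STP} is a direct single-interval continuity argument: since hypothesis~\eqref{smallhip1} already gives $\|\nabla\tilde u\|_{W(I)} \leq \varepsilon_0$ small on all of $I$, there is nothing to subdivide. One simply writes Duhamel for $v = u - \tilde u$ on the whole interval, applies Strichartz and Corollary~\ref{cor1} to obtain a closed inequality for $S(t) := \|\nabla F(v+\tilde u, \tilde u)\|_{2,\frac{2N}{N+2}}$ of the form
\[
S(t) \lesssim (S(t)+\varepsilon)^{p+1} + \varepsilon_0(S(t)+\varepsilon)^{p} + \varepsilon_0^{p}(S(t)+\varepsilon),
\]
and closes by continuity (this is exactly where $p>1$, i.e.\ $b < \tfrac{6-N}{2}$, is used, as you correctly flag). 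The remaining conclusions~\eqref{smallthesis1}--\eqref{smallthesis3} then follow by one more pass through Strichartz. Your subdivision-and-iterate scheme is precisely the mechanism by which Theorem~\ref{LTP} reduces to Theorem~\ref{STP}; by building it here you are effectively proving both at once. You even notice this yourself (``$J$ can be taken to depend only on $\varepsilon_0$''), and indeed under the stated hypotheses your $J$ collapses to $1$ --- so nothing is wrong, but the division of labor between the two theorems is obscured.

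One minor citation issue: Remark~\ref{remarkcont} controls the \emph{undifferentiated} difference $|x|^{-b}(|u|^p u - |\tilde u|^p \tilde u)$ in a dual norm, not its gradient. The estimate you actually need for $\nabla F(u,\tilde u)$ comes from expanding the gradient pointwise (using $p>1$ to split off factors $|v+\tilde u|^{p-1}$ and $|\tilde u|^{p-1}$) and then applying Corollary~\ref{cor1}, as the paper does.
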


  \begin{proof} 
We can consider that there exist a solution $u$ for \eqref{IVP} given by Theorem \ref{lwp}. Assume that $t_0 = \inf I$. Let $v:= u - \tilde{u}$. Then $v$ satisfies the following IVP 
\begin{equation}\label{IVP2'}
 \begin{cases}
 i \partial_t v + \Delta v = F(v+ \tilde{u}, \tilde{u}) - e  \ \ \ t \in \mathbb{R}, \ x \in \mathbb{R}^{n}\\
 v(x,t_0) = v_0(x) = u(x, t_0) - \tilde{u}(x, t_0)  \in H^{1}(\mathbb{R}^{n}).
 \end{cases}
 \end{equation}
 
Now, we will work on the slab $[t_0, T] \times \mathbb{R}^{n} = I_1 \times \mathbb{R}^{n} $, for $T \in I$. By Strichartz estimates,
 \begin{equation*}
\|\nabla v\|_{W(I)} \lesssim 
 \| \nabla [e^{i(t-t_0)\Delta}v(t_0)]\|_{W(I)} + \|\nabla[F(v+ \tilde{u}, \tilde{u})]\|_{\gamma', \rho'} + \|\nabla e\|_{\gamma' , \rho'}
 \end{equation*}
for any $L^{2}$-admissible pair $(\gamma, \rho)$. So, from \eqref{smallhip2} and \eqref{smallhip3} results
\begin{equation}\label{bounded1}
 \|  \nabla v\|_{  W(I) } \lesssim  \|\nabla [ e^{i(t-t_0)\Delta}v(t_0)]\|_{W(I)} + \|\nabla[F(v+ \tilde{u}, \tilde{u})]\|_{2, \frac{2N}{N+2}}  +  \varepsilon  \lesssim  \|\nabla[F(v+ \tilde{u}, \tilde{u})]\|_{2, \frac{2N}{N+2}}  +  \varepsilon .
\end{equation} 
In turn, since $0< b < \min \{2, \frac{N}{2}, \frac{6-N}{2} \}$ and $2 < N < 6$ we have $p=\frac{4-2b}{N-2}>1$. Hence, 
$$
\big| \nabla \left[ F(v+\tilde{u}, \tilde{u}) \right]  \big| \lesssim \left[ |x|^{-(b+1)}\left( |v+\tilde{u}|^{p} +|\tilde{u} |^{p} \right) \right]|v| + |x|^{-b} |v+\tilde{u}|^{p}|\nabla v| + |x|^{-b} \left(  |v+\tilde{u}|^{p -1} +|\tilde{u} |^{p -1}\right)|\nabla \tilde{u}||v|.
$$

So, it follows from \eqref{nonlinearestimate3} that
\begin{equation*}
\begin{split}
S(t) := & \; \|\nabla[F(v+ \tilde{u}, \tilde{u})]\|_{2, \frac{2N}{N+2}} \leq  \;  \||x|^{-(b+1)}  |v+ \tilde{u}|^{p}v  \|_{2, \frac{2N}{N+2}} + \||x|^{-(b+1)}  |\tilde{u}|^{p}v  \|_{2, \frac{2N}{N+2}}  + \||x|^{-b} |v+ \tilde{u}|^{p} \nabla v  \|_{2, \frac{2N}{N+2}}  \\
 & \;  + \||x|^{-b}|v+ \tilde{u}|^{p-1}|v| \nabla \tilde{u}\|_{2, \frac{2N}{N+2}}+  \||x|^{-b}| \tilde{u}|^{p-1}|v| \nabla \tilde{u}\|_{2, \frac{2N}{N+2}}\\
  \lesssim & \; \|\nabla [v + \tilde{u}]\|_{W(I)}^{p}\| \nabla v \|_{W(I)} + \|\nabla  \tilde{u} \|_{W(I)}^{p} \| \nabla v \|_{W(I)}  +  \|\nabla (v + \tilde{u})\|_{W(I)}^{p -1} \|\nabla   \tilde{u} \|_{W(I)} \| \nabla v \|_{W(I)} .
 \end{split}
\end{equation*}

The previous inequality, \eqref{bounded1} and \eqref{smallhip1} imply that
\begin{small}
\begin{equation*}
\begin{split}
S(t) \lesssim & \; \| \nabla  v\|_{ W(I)}^{p +1}  + \| \nabla v\|_{  W(I) }^{p } \| \nabla \tilde{u}\|_{ W(I) } + \| \nabla  \tilde{u}\|_{  W(I)}^{p} \| \nabla v\|_{  W(I) }\\
\lesssim \; &  \left(S(t) + \varepsilon \right)^{p +1} + \varepsilon_0 \left( S(t) + \varepsilon \right)^{p } + \varepsilon_0^{p} \left( S(t) +  \varepsilon \right).
\end{split}
\end{equation*}
\end{small} 

Thus, a standard continuity argument shows that if we take $\varepsilon_0 = \varepsilon_0 (E, E')$ sufficiently small we obtain 
\begin{equation}\label{bounded2}
\|\nabla[F(v+ \tilde{u}, \tilde{u})]\|_{2, \frac{2N}{N+2}} \leq \varepsilon \ \ \mbox{for all } \ T \in I_1,
\end{equation}
which implies \eqref{smallthesis4}. Using \eqref{bounded1} and \eqref{bounded2}, one easily derives \eqref{smallthesis1}.  

Also, by Lemma \ref{strichartzest} we obtain
\begin{equation*}
\begin{split}
\| \nabla (u - \tilde{u})\|_{S(L^{2}, I)}  \lesssim  &\|u(t_0) - \tilde{u}(t_0)\|_{\dot{H}^{1}_x} + \|\nabla[F(u, \tilde{u})]\|_{2, \frac{2N}{N+2}} + \|\nabla e\|_{2, \frac{2N}{N+2}} 
\lesssim  E' + \varepsilon,
\end{split}
\end{equation*}
where in the last inequality we use \eqref{energybound2} and \eqref{smallhip3}.

Now, an application of \eqref{energybound1}-\eqref{energybound2}, \eqref{nonlinearestimate4} and \eqref{smallthesis2} results 
\begin{equation*}
\begin{split}
\|\nabla u\|_{S(L^{2}, I)} \lesssim & \; \|u(t_0)\|_{\dot{H}^{1}_x} + \|\nabla[|x|^{-b}|u|^{p}u]\|_{2, \frac{2N}{N+2}} \lesssim  \|u(t_0) - \tilde{u}(t_0)\|_{\dot{H}^{1}_x} +  \|\tilde{u}(t_0)\|_{\dot{H}^{1}_x} + \| \nabla u\|_{W(I)}^{p +1} \\
\lesssim & \; E + E' + \left(  \|  \nabla v \|_{W(I)} +   \| \nabla  \tilde{u} \|_{W(I)} \right)^{p +1} \lesssim E + E' +( \varepsilon + \varepsilon_0)^{p +1},
\end{split}
\end{equation*}
which proves \eqref{smallthesis3}, provide $\varepsilon_0$ is sufficiently small depending on $E$ and $E'$. 
  \end{proof}
  
  \subsection{Long-Time Perturbations}
  
  Using a straightforward iterative argument (similar to \cite{CKSTT2008}, \cite{RV2007}, and \cite{TV2005}) based on time interval partitioning, we can relax hypothesis \eqref{smallhip1} by allowing $\tilde u$ to be large (while still remaining bounded in some norm).
  
  \begin{theorem}\label{LTP}
  	Let $I$ be a compact time interval, $2<  N < 6$, $p= \frac{4-2b}{N-2}$, and $0 < b < \min \{2, \frac{N}{2},\frac{ 6-N}{2} \}$. Suppose $\tilde{u}$ is an approximate solution to \eqref{IVP} on $I \times \mathbb{R}^{N}$ in the sense that 
  	$$
  	i\partial_t \tilde{u} + \Delta \tilde{u} =  |x|^{-b}|\tilde{u}|^{\frac{4-2b}{N-2}}\tilde{u} + e  
  	$$
  	for some function $e$. Assume we have the energy bound 
  	\begin{equation}\label{energybound1-L}
  		\|\tilde{u}\|_{L_{t}^{\infty}\dot{H}_{x}^{1}(I \times \mathbb{R}^{N})} \leq E 
  	\end{equation}
  	for some constant $E >0$ and
  	\begin{equation}\label{boundedhipL}
  		\|\tilde{u}\|_{Z(I)} \leq L
  	\end{equation} 
  	for some $L>0$. Let $t_0 \in I$, and let $u(t_0) \in H^{1}(\mathbb{R}^{n})$ be close to $\tilde{u}(t_0)$ in the sense that
  	\begin{equation}\label{energybound2-L}
  		\|u(t_0) - \tilde{u}(t_0)\|_{\dot{H}^{1}_x} \leq E'
  	\end{equation}
  	for some $E'>0$. Moreover, assume the smallness conditions:
  	\begin{equation}\label{smallhip2-L}
  		\|\nabla [ e^{i(t-t_0)\Delta}(u(t_0) - \tilde{u}(t_0))]\|_{W(I)} \leq \varepsilon
  	\end{equation}
  	\begin{equation}\label{smallhip3-L}
  		\|\nabla  e\|_{2, \frac{2N}{N+2}}  \leq \varepsilon
  	\end{equation} 
  	for some $0 < \varepsilon \leq \varepsilon_1$, where $\varepsilon_1 = \varepsilon_1(E, E',M)>0$ is a small constant. Then there exists a solution $u \in S^{1}(I \times \mathbb{R}^{N})$ to \eqref{IVP} on $I \times \mathbb{R}^{N}$ with the specified initial data $u(t_0)$ at time $t=t_0$ that satisfies:
  	\begin{equation}\label{smallthesis1-l}
  		\|\nabla (u - \tilde{u})\|_{W(I)}  \lesssim C(E, E', L) \varepsilon 
  	\end{equation}
  	\begin{equation}\label{smallthesis2-L}
  		\|\nabla (u - \tilde{u})\|_{S(L^{2},I)}  \leq C(E, E', L) \left( E' + \varepsilon  \right)
  	\end{equation} 
  	\begin{equation}\label{smallthesis3-L}
  		\| \nabla u \|_{S(L^{2},I)}  \leq C( E + E',L).
  	\end{equation} 
  	Here, $C(E, E', L)>0$ is a non-decreasing function of $E$, $E'$, $L$, and the dimension $N$.
  \end{theorem}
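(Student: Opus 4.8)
The plan is to derive Theorem~\ref{LTP} from finitely many applications of the short--time perturbation result Theorem~\ref{STP}, after subdividing $I$ into subintervals on which $\tilde u$ is small in the $W$--norm. Since Theorem~\ref{STP} is stated with $t_0=\inf I$, we may reduce to that case (otherwise split $I$ at $t_0$ and run the argument on each side). The preliminary step is to upgrade \eqref{boundedhipL} into an a priori bound $\|\nabla\tilde u\|_{W(I)}\le A=A(E,L)$. To this end, cut $I$ into $C(L)$ consecutive intervals $J_k$ on which $\|\tilde u\|_{Z(J_k)}$ is below a small absolute constant; on each $J_k$, Strichartz estimates, the bound \eqref{nonlinearestimate6} of Corollary~\ref{cor2}, the equation for $\tilde u$, and \eqref{smallhip3-L} give
\[
\|\nabla\tilde u\|_{W(J_k)}\;\lesssim\;\|\tilde u\|_{L^\infty_t\dot H^1_x}+\|\tilde u\|_{Z(J_k)}^{\,p}\,\|\nabla\tilde u\|_{W(J_k)}+\varepsilon ,
\]
so the continuity argument of Lemma~\ref{boundedINLScritical} (now carrying the forcing term $e$) yields $\|\nabla\tilde u\|_{W(J_k)}\lesssim E+\varepsilon\lesssim E$; summing over the $C(L)$ pieces, and using that $\|\cdot\|_{W(\cdot)}$ is built from $L^q_tL^r_x$--norms with $q<\infty$, gives the claimed bound $A=A(E,L)$.

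Next I would partition $I=\bigcup_{j=1}^{J}I_j$ with $I_j=[t_j,t_{j+1}]$, $t_1=t_0$, into $J\le (A/\varepsilon_0)^{q}+1=J(E,L)$ consecutive subintervals on which $\|\nabla\tilde u\|_{W(I_j)}\le\varepsilon_0$, where $\varepsilon_0$ is the smallness constant of Theorem~\ref{STP}; crucially, inspection of the proof of Theorem~\ref{STP} shows that $\varepsilon_0$ may be taken to depend on $E$ alone ($E'$ only enters the hypotheses \eqref{energybound2}, \eqref{smallhip2} and the conclusions, not the admissible size of $\varepsilon$). One then applies Theorem~\ref{STP} successively on $I_1,\dots,I_J$, with reference times $t_j=\inf I_j$, writing $E'_0:=E'$ and $E'_j:=\|u(t_{j+1})-\tilde u(t_{j+1})\|_{\dot H^1_x}$. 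At the $j$--th step, \eqref{smallhip1} of Theorem~\ref{STP} is the partitioning condition, \eqref{smallhip3} is \eqref{smallhip3-L}, and for \eqref{smallhip2} one rewrites, via the Duhamel formula \eqref{duhamell} applied to $u(t_j)-\tilde u(t_j)$,
\[
e^{i(t-t_j)\Delta}\big(u(t_j)-\tilde u(t_j)\big)=e^{i(t-t_0)\Delta}\big(u(t_0)-\tilde u(t_0)\big)-i\int_{t_0}^{t_j}e^{i(t-s)\Delta}\big[F(u,\tilde u)-e\big](s)\,ds ,
\]
and estimates the $W(I_j)$--norm of its gradient by Strichartz, using \eqref{smallhip2-L}, \eqref{smallhip3-L} and the conclusions \eqref{smallthesis4} already produced on $I_1,\dots,I_{j-1}$; this produces a quantity $\lesssim C(J)\,\varepsilon$. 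Hence, if $\varepsilon_1=\varepsilon_1(E,E',L)$ is chosen so small that $C(J)\,\varepsilon_1$ is below the threshold required by Theorem~\ref{STP}, every step is legitimate and outputs \eqref{smallthesis1}--\eqref{smallthesis4} on $I_j$, with $E'_{j-1}$ playing the role of $E'$.

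To conclude, apply the Duhamel formula at time $t=t_{j+1}$ together with \eqref{smallthesis4} and \eqref{smallhip3-L}: one gets $E'_j\le C_0\big(E'_{j-1}+C(J)\,\varepsilon\big)$ for an absolute constant $C_0$, so by induction $E'_j\le C_0^{\,j}E'+C_0^{\,j+1}C(J)\,\varepsilon$, and since $j\le J=J(E,L)$ and $\varepsilon\le\varepsilon_1\le 1$ all the $E'_j$ are bounded by a fixed number $C(E,E',L)$. The solutions furnished by Theorem~\ref{STP} on the $I_j$ coincide at the common endpoints, hence glue to a single $u\in S^1(I\times\mathbb R^N)$. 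Summing \eqref{smallthesis1}, \eqref{smallthesis2}, \eqref{smallthesis3} over $j=1,\dots,J$ — again exploiting that the $W(I)$-- and $S(L^2,I)$--norms are $L^q_tL^r_x$--norms with finite $q$, so the global norm is at most $J^{1/q}$ times the largest piecewise norm — gives exactly \eqref{smallthesis1-l}, \eqref{smallthesis2-L}, \eqref{smallthesis3-L} with $C(E,E',L)\approx C(E,L)\,C_0^{\,J(E,L)}$, which is non--decreasing in $E,E',L$.

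The step I expect to be the main obstacle is precisely this bookkeeping: all $J$ invocations of Theorem~\ref{STP} must remain within its hypotheses although the $\dot H^1_x$--distance $E'_j$ between $u$ and $\tilde u$ may grow geometrically along the chain, and although the smallness condition \eqref{smallhip2} at step $j$ accumulates the errors from all earlier subintervals. The scheme closes because the number of steps $J$ is fixed \emph{in advance} by $E$ and $L$ alone (via the a priori bound $A(E,L)$ and the $E'$--independent threshold $\varepsilon_0$ of Theorem~\ref{STP}); this keeps $E'_j$ below the fixed quantity $C(E,E',L)$ and keeps the accumulated error in \eqref{smallhip2} below $C(J)\,\varepsilon$, so that a single choice of $\varepsilon_1=\varepsilon_1(E,E',L)$ suffices. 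Were $\varepsilon_0$ to degenerate as $E'\to\infty$, the number of subintervals and the growth of $E'_j$ would be coupled and termination would be unclear; so verifying the $E'$--independence of $\varepsilon_0$ in Theorem~\ref{STP} is the one point deserving care. The remaining ingredients — finiteness of the time exponents defining $W(I)$ and $V(I)$, and the applicability of the continuity arguments in Theorem~\ref{STP} (which require $p=\tfrac{4-2b}{N-2}>1$) — are exactly where the standing hypotheses $2<N<6$ and $0<b<\tfrac{6-N}{2}$ enter.
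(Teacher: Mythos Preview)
Your proposal is correct and follows essentially the same route as the paper: first upgrade the $Z(I)$ bound \eqref{boundedhipL} to a $W(I)$ bound on $\nabla\tilde u$ via a subdivision argument and Corollary~\ref{cor2}, then partition $I$ into finitely many pieces on which $\|\nabla\tilde u\|_{W(I_j)}\le\varepsilon_0$, and iterate Theorem~\ref{STP} while propagating the smallness conditions \eqref{energybound2} and \eqref{smallhip2} from one subinterval to the next by Duhamel and the output \eqref{smallthesis4}. The only noteworthy difference is bookkeeping: the paper simply takes $\varepsilon_0=\varepsilon_0(E,2E')$ and then chooses $\varepsilon_1$ small enough (depending on the resulting number $N_1$ of subintervals, $E$, and $E'$) so that the accumulated $\dot H^1_x$-distance stays under control, whereas you argue that inspection of the proof of Theorem~\ref{STP} shows $\varepsilon_0$ may be taken independent of $E'$, which decouples the number of subintervals from $E'$ and makes the induction slightly cleaner. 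Both choices close the argument; your observation is a mild refinement rather than a different strategy.
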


\begin{proof}
We will derive Theorem \ref{LTP} from Theorem \ref{STP} using an iterative procedure. First, we assume without loss of generality that $t_0 = \inf I$. Let $\varepsilon_0 = \varepsilon_0(E, 2E')$ be as defined in Theorem \ref{STP}.

The first step is to establish a bound on $\|\nabla \tilde{u}\|_{S(L^{2}, I)}$. To do so, we subdivide $I$ into $N_0 \sim \left( 1 + \frac{L}{\varepsilon_0} \right)^{\frac{2(N+2-2b)}{N- 2}}$ subintervals $J_k$ such that
\begin{equation}\label{ineq10}
	\| \tilde{u}\|_{Z(J_k)} \leq \varepsilon_0.
\end{equation}

By combining Corollary \ref{cor2} with Lemma \ref{strichartzest}, \eqref{energybound1-L}, \eqref{smallhip3-L}, and \eqref{ineq10}, we estimate
\begin{equation*}
	\begin{split}
		\|\nabla \tilde{u}\|_{S(L^{2}, J_k)} \lesssim &\;  \|\tilde{u}(t_0)\|_{H^{1}_x} + \|\nabla(|x|^{-b}|\tilde{u}|^{p} \tilde{u})\|_{2, \frac{2N}{N+2}} + \|\nabla e\|_{2, \frac{2N}{N+2}}\\
		\lesssim & \; E + \| \tilde{u}\|_{Z(J_k)}^{p} \|\nabla \tilde{u}\|_{W(J_k)} + \varepsilon \\
		\lesssim & \; E + \varepsilon_0^{p} \| \nabla \tilde{u}\|_{S(L^{2}, J_k)} + \varepsilon.
	\end{split}
\end{equation*}

A standard continuity argument yields
$$
\| \nabla \tilde{u}\|_{S(L^{2}, J_k)} \lesssim E,
$$
provided $\varepsilon_0$ is sufficiently small depending on $E$. Summing these bounds over all the intervals $J_k$, we obtain
$$
\|\nabla \tilde{u}\|_{S(L^{2}, I)} \leq C(E, L,  \varepsilon_0),
$$
which implies
$$
\|\nabla \tilde{u}\|_{W( I)} \leq C(E, L,  \varepsilon_0).
$$

Next, we further subdivide $I$ into $N_1 =C(E, L, \varepsilon_0)$ subintervals $I_j = [t_j, t_{j+1}]$ such that
\begin{equation}\label{condaux1}
	\|\nabla \tilde{u}\|_{ W ( I_j ) } \leq \varepsilon_0.
\end{equation}

Choosing $\varepsilon_1$ sufficiently small depending on $N_1$, $E$, and $E'$, we apply Theorem \ref{STP} to obtain for each $j$ and all $0 < \varepsilon < \varepsilon_1$
\begin{equation*}
	\|\nabla (u - \tilde{u})\|_{W(I_j)}  \leq C(j) \;  \varepsilon 
\end{equation*} 
\begin{equation*}
	\|\nabla (u - \tilde{u})\|_{S(L^{2},I_j)}  \leq C(j) \left(E' +\varepsilon \right)
\end{equation*} 
\begin{equation*}
	\|\nabla u \|_{S(L^{2},I_j)}  \leq C(j) \left( E + E' \right)
\end{equation*} 
\begin{equation*}
	\left\| \nabla F(u, \tilde{u}) \right\|_{2, \frac{2N}{N+2}}  \leq C(j) \;  \varepsilon ,
\end{equation*} 
provided we can show that \eqref{energybound2-L} holds with $t_0$ replaced by $t_j$. This follows by an inductive argument. By Strichartz estimates, \eqref{energybound2-L}, and the inductive hypothesis
\begin{align}
	\|u(t_{j+1}) - \tilde{u}(t_j+1)\|_{\dot{H}^{1}_x} \lesssim & \; \|u(t_{0}) - \tilde{u}(t_0)\|_{\dot{H}^{1}_x} + \|\nabla e\|_{L^{2}_{ [t_0, t_{j+1}]}L^{\frac{2}{N+2}}_{x}} + \left\| \nabla F(u, \tilde{u}) \right\|_{L^{2}_{ [t_0, t_{j+1}]}L^{\frac{2}{N+2}}_{x}}\\
	\lesssim & E' + \varepsilon +\displaystyle \sum_{k=0}^{j} C(k) \;  \varepsilon 
\end{align}

By Strichartz estimates, we have
\begin{equation*}
	\begin{split}
		\|\nabla[ e^{i(t-t_{j+1})\Delta}(u(t_{j+1} ) - \tilde{u}(t_{j+1}))]\|_{W(I)} \lesssim & \|\nabla[ e^{i(t-t_0)\Delta}(u(t_0 ) - \tilde{u}(t_0 ))]\|_{W(I)} + \| \nabla e\|_{2, \frac{2N}{N+2}}+   \| [\nabla F(u, \tilde{u})] \|_{2, \frac{2N}{N+2}} \\
		\lesssim & \; \varepsilon + \|\nabla e\|_{2, \frac{2N}{N+2}} + \|\nabla F(u, \tilde{u})\|_{2, \frac{2N}{N+2}} \\
		\lesssim & \; \varepsilon + \displaystyle \sum_{k=0}^{j} C(k) \;   \varepsilon .
	\end{split}
\end{equation*}
Here, $C(k)$ depends only on $k$, $E$, $E'$, and $\varepsilon_0$. Choosing $\varepsilon_1$ sufficiently small depending on $N_1$, $E$, and $E'$, we can continue the inductive argument. Note that the final constants can easily be chosen to depend in a non-decreasing manner on $E$, $E'$, and $L$.

\end{proof}

 \section{Local well-posedness for DINLS}
 In this section, we prove the local well-posedness for the IVP \eqref{IVP2} with $p_2=\frac{4-2b_2}{N-2}$.
 
 \begin{proposition}[Local Well-Posedness]\label{lwp}
 	Let $u_0 \in H^{1}_x$, $\lambda_1, \lambda_2$ be nonzero real constants, $2<  N <6$, $0 < b_1, b_2 < \min \{\frac{N}{2}, 2 \}$,  $0 < p_1 < \frac{4-2b_1}{N-2}$, and $p_2 = \frac{4-2b_2}{N-2}$. Given $u_0 \in H^{1}(\mathbb{R}^{N})$, there exists $T=T(u_0)>0$ and a unique solution $u$ of \eqref{IVP2} in the time interval $[-T, T]$ with
 	$$
 	u \in C([-T,T]: H^1(\mathbb{R}^{N})) \cap X^1_{b_1, b_2}([-T,T]).
 	$$ 
 	In addition, for all $T'< T$, there exists a neighborhood $B$ of $u_0$ in $H^{1}(\mathbb{R}^{N})$ such that the function $u_0 \in B \mapsto u(t) \in C([-T,T]: H^1(\mathbb{R}^{N})) \cap X^1_{b_1, b_2}([-T,T])$ is Lipschitz.
 	Moreover, for every $T>0$, there exists $\eta = \eta(T)$ such that if
 	$$
 	\|e^{it \Delta } u_0 \|_{X^1_{b_1, b_2}([-T,T])}  \leq \eta,
 	$$
 	then \eqref{IVP2} admits a unique strong $H^{1}_x$-solution $u$ defined on  $[-T,T]$.  
 \end{proposition}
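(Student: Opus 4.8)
The plan is to run a contraction-mapping argument for the Duhamel operator
\[
\Phi(u)(t) := e^{it\Delta}u_0 - i\int_0^t e^{i(t-\tau)\Delta}\bigl[\lambda_1|x|^{-b_1}|u|^{p_1}u + \lambda_2|x|^{-b_2}|u|^{p_2}u\bigr]\,d\tau
\]
on a ball of $X^1_{b_1, b_2}([-T,T])$ equipped with the weaker metric $d(u,v):=\|u-v\|_{X_{b_1, b_2}([-T,T])}$. (The difference estimate of Corollary \ref{nlestimates2} loses one derivative, so the contraction must be measured in this rougher norm, while the ball itself is taken in $X^1_{b_1, b_2}$.) The one point where the usual ``shrink the time interval'' scheme fails is the energy-critical term: since $p_2(N-2)+2b_2=4$, one has $\frac{p_2+2}{\gamma_{b_2}}=1$, so the weight $|I|^{1-(p_i+2)/\gamma_{b_i}}$ occurring in Lemma \ref{lemmaf} and Corollary \ref{nlestimates2} equals $|I|^{0}=1$ for $i=2$ and yields no gain from small $|I|$. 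This is the \textbf{main obstacle}, and it is handled exactly as for the energy-critical INLS/DNLS: we replace smallness of $T$ by smallness of the free evolution. Every exponent pair entering $X^1_{b_1, b_2}$ is Schrödinger-admissible and has finite time exponent, so the Strichartz estimates of Lemma \ref{strichartzest} give $e^{it\Delta}u_0\in X^1_{b_1, b_2}(\mathbb{R})$ for every $u_0\in H^1$, whence $\|e^{it\Delta}u_0\|_{X^1_{b_1, b_2}([-T,T])}\to 0$ as $T\to 0^+$ by dominated convergence. (For the subcritical nonlinearity, $\frac{p_1+2}{\gamma_{b_1}}=\frac{p_1(N-2)+2b_1}{4}<1$ precisely because $p_1<\frac{4-2b_1}{N-2}$, so that term additionally carries a positive power of $|I|$ and causes no trouble.)

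To prove the conditional (``Moreover'') statement, fix $T>0$, take $\eta=\eta(T)>0$ small, and work on $B:=\{u\in X^1_{b_1, b_2}([-T,T]):\|u\|_{X^1_{b_1, b_2}}\le 2\eta\}$ with the metric $d$, which is a complete metric space by the standard weak-compactness argument. Using the homogeneous Strichartz estimate for the data term, the inhomogeneous one for the Duhamel term, and Corollary \ref{nlestimates2}, on $B$ one obtains
\[
\|\Phi(u)\|_{X^1_{b_1, b_2}}\lesssim \|e^{it\Delta}u_0\|_{X^1_{b_1, b_2}([-T,T])}+\sum_{i=1}^{2}|I|^{1-\frac{p_i+2}{\gamma_{b_i}}}\|\nabla u\|_{X_{b_1, b_2}}^{p_i}\bigl(\|u\|_{X_{b_1, b_2}}+\|\nabla u\|_{X_{b_1, b_2}}\bigr)
\]
and, from the difference estimate of the same corollary,
\[
d(\Phi(u),\Phi(v))\lesssim \sum_{i=1}^{2}|I|^{1-\frac{p_i+2}{\gamma_{b_i}}}\bigl(\|\nabla u\|_{X_{b_1, b_2}}^{p_i}+\|\nabla v\|_{X_{b_1, b_2}}^{p_i}\bigr)\,d(u,v).
\]
Using $|I|\le 2T$ and $p_1,p_2>0$, the right-hand sides are bounded by $\|e^{it\Delta}u_0\|_{X^1_{b_1, b_2}([-T,T])}+C(T)\eta^{p_1+1}+C\eta^{p_2+1}$ and by $C(T)\bigl(\eta^{p_1}+\eta^{p_2}\bigr)d(u,v)$, respectively; hence, if $\|e^{it\Delta}u_0\|_{X^1_{b_1, b_2}([-T,T])}\le\eta$ with $\eta$ chosen small (depending on $T$), $\Phi$ maps $B$ into itself and is a $\tfrac12$-contraction. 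The Banach fixed point theorem produces a unique $u\in B$ solving the Duhamel equation, and since $u\in X^1_{b_1, b_2}([-T,T])$ obeys \eqref{duhamell} it belongs to $C([-T,T];H^1(\mathbb{R}^N))$. The first assertion of the proposition then follows: for arbitrary $u_0\in H^1$, pick $T=T(u_0)>0$ with $\|e^{it\Delta}u_0\|_{X^1_{b_1, b_2}([-T,T])}\le\eta(T)$, possible by the vanishing noted above, and apply the conditional result.

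It remains to upgrade uniqueness from $B$ to the full class $C([-T,T];H^1)\cap X^1_{b_1, b_2}([-T,T])$ and to prove Lipschitz dependence. If $u,\tilde u$ are two such solutions, then for every $t_0$ and every short enough subinterval $J\ni t_0$ the norms $\|u\|_{X^1_{b_1, b_2}(J)}$, $\|\tilde u\|_{X^1_{b_1, b_2}(J)}$ are as small as desired (dominated convergence again, since $u,\tilde u\in X^1_{b_1, b_2}([-T,T])$), so the difference estimate of Corollary \ref{nlestimates2} gives $\|u-\tilde u\|_{X_{b_1, b_2}(J)}\le\tfrac12\|u-\tilde u\|_{X_{b_1, b_2}(J)}$, i.e.\ $u\equiv\tilde u$ on $J$, and a connectedness argument propagates this over $[-T,T]$. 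For the Lipschitz bound on $[-T',T']$ with $T'<T$, partition $[-T',T']$ into finitely many subintervals on which the relevant solutions (and a small $H^1$-neighbourhood of $u_0$) have small $X^1_{b_1, b_2}$-norm, and iterate the difference estimates across them; the $X^1$-component uses in addition the difference estimate for $\nabla\bigl(|x|^{-b_i}|u|^{p_i}u-|x|^{-b_i}|v|^{p_i}v\bigr)$, obtained by expanding the difference of gradients via the chain rule (as in the proof of Corollary \ref{nlestimates2}) and applying Lemma \ref{lemmaf} and Corollary \ref{cor1} termwise, exactly as in the continuous-dependence step of Theorem \ref{lwpECnls}, with the $p_1$-nonlinearity absorbed by its positive power of $|I|$.
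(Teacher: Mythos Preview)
Your proposal is correct and follows essentially the same route as the paper: a contraction for the Duhamel map on a ball in $X^1_{b_1,b_2}$ measured with the weaker metric $d(u,v)=\|u-v\|_{X_{b_1,b_2}}$, driven by the nonlinear estimates of Corollary~\ref{nlestimates2}, with the critical term tamed not by shrinking $T$ but by the vanishing $\|e^{it\Delta}u_0\|_{X^1_{b_1,b_2}([-T,T])}\to 0$. The only cosmetic differences are that the paper builds the ball with the auxiliary norm $\|v\|_{\Sigma}=\sup_t\|v-e^{it\Delta}u_0\|_{H^1}+\|v\|_{X^1_{b_1,b_2}}$ (so $C_tH^1$ is obtained inside the fixed-point space rather than recovered afterwards) and treats the unconditional and conditional statements in parallel, whereas you prove the conditional part first and deduce the unconditional one; the underlying estimates and the handling of the energy-critical obstruction are identical.
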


 \begin{proof}

Without loss of generality, we consider only the case $t>0$. Let $\Gamma$ be the operator given by
\begin{equation*}
	\Gamma(u)(t)  =  e^{it\Delta}u(t_0) - i \int_{0}^{t} e^{i(t-\tau)\Delta}G(\cdot, \tau)\, d\tau,
\end{equation*}
where $G(u) = \lambda_1|x|^{-b_1}|u|^{p_1}u + \lambda_2 |x|^{-b_2}|u|^{p_2}u$.

Note that from Strichartz estimates and Corollary  \ref{nlestimates2}, we have for $k=0,1$ we obtain
\begin{align}\label{lwp22}
	\nonumber \| |\nabla|^{k} \Gamma (u)(t)\|_{X_{b_1, b_2}([-T,T])} \lesssim  & \,   \||\nabla|^{k}e^{it\Delta} u_0\|_{X_{b_1, b_2}([-T,T])} + \||\nabla|^{k}G(u)\|_{S'(L^2, [-T,T])} \\
	\nonumber \leq&  \,  C \||\nabla|^{k} e^{it\Delta} u_0\|_{X_{b_1, b_2}([-T,T])} + C\|\nabla u\|_{W([-T,T])}^{p_2}\||\nabla|^{k}u\|_{V([-T,T])}\\
	& \, + C  T^{1- \frac{p_1+2}{\gamma_{b_1}}}\| \nabla u\|_{X_{b_1, b_2}([-T,T])}^{p_1}\||\nabla|^{k}u\|_{X_{b_1, b_2}([-T,T])} .
\end{align}

On the other hand,  we have
\begin{align}\label{lwp12}
	\nonumber \sup_{[0, T_0]} \| \Gamma (u)(t) - e^{it\Delta} u_0\|_{H^{1}}   \leq & \,  C \sum_{k=0}^{1}\| \int_{0}^{t} e^{i(t - \tau)\Delta}|\nabla|^{k}G(u)\|_{S(L^2,I)} \\
	\nonumber \leq & \, C \,  \| \nabla u\|_{W([-T,T])}^{p_2}\||\nabla|^{k}u\|_{V([-T,T])} \\
	& \, + C  T^{1- \frac{p_1+2}{\gamma_{b_1}}} \, \| \nabla u\|_{X_{b_1, b_2}([-T,T])}^{p_1}\||\nabla|^{k}u\|_{X_{b_1, b_2}([-T,T])} .
\end{align}

Since the $X_{b_1,b_2}$-norm is $L^2$-admissible, for any fixed $\epsilon >0$, there exists $\tilde{T}>0$ (see \cite{PL}, Proposition 5.1) such that
$$
\|e^{it\Delta} u_0\|_{ X^1_{b_1, b_2}([-\tilde{T},\tilde{T}])}  + \|\nabla u\|_{X_{b_1,b_2}([-\tilde{T},\tilde{T}])}< \epsilon .
$$
By considering the norm 
$$
\|v\|_{\Sigma} := \sup_{[0, T_0]} \| v - e^{it\Delta} u_0\|_{H^{1}} + \| v\|_{X^1_{b_1, b_2}([-T,T])},
$$
we define the space
\begin{align*}
	\Sigma_{a}^T := \left\{v \ :  \ v \in C([0,T], H^{1}(\mathbb{R}^{N})) \cap X^1_{b_1, b_2}([0,T])  \ \ \mbox{and} \ \ \|u\|_{\Sigma}\leq a \right\}
\end{align*}
and the metric 
$$d(u,v) := \|u-v\|_{X_{b_1, b_2}([0,T])}.$$
By a similar argument as in Step 1 of the proof of [\cite{cazenave03} Theorem 4.4.1] (see also [\cite{cazenave03}, Theorem 1.2.5]), we can prove that $(\Sigma_M , d)$ is a complete metric space.

Using \eqref{lwp22} and \eqref{lwp12}, we have that for all $\epsilon >0$, there exists $T>0$ such that if $u \in \Sigma_a^T$, then
\begin{equation}\label{PF3}
	\|\Gamma (u)\|_{\Sigma} < C \epsilon + C \left( \epsilon^{p_2} + C T^{1- \frac{p_1+2}{\gamma_{b_1}}} a^{p_1} \right) a.
\end{equation}

Therefore, if
\begin{equation}\label{PF1}
	C \epsilon + C \left( \epsilon^{p_2} + CT^{1 - \frac{p_1+2}{\gamma_{b_1}}}a^{p_1}\right) a < a,
\end{equation}
we have that $\Gamma(\Sigma_a^T) \subseteq \Sigma_a^T$. 

Additionally, for the same $T(\epsilon)$ from \eqref{PF3}, we obtain
\begin{equation*}
	\begin{split}
		\| \Gamma (u) - \Gamma (v) \|_{ X_{b_1, b_2}}  \lesssim & \,  \| u -v \|_{S(L^2, [-T,T])} \lesssim    \| G(u) - G(v)\|_{\gamma', \rho'} \\
		\lesssim &  \,  \displaystyle 2 C\left(\|\nabla u\|_{W([-T,T])}^{p_2} + T^{1- \frac{p_1+2}{\gamma_{b_1}}}\|\nabla  v\|_{X_{b_1, b_2}([-T,T])}^{p_1} \right)\|u-v\|_{X_{b_1, b_2}([-T,T])}\\
		\leq &\,   2C\left(\epsilon^{p_2} + T^{1- \frac{p_1+2}{\gamma_{b_1}}}a^{p_1 } \right) \|u-v\|_{X_{b_1, b_2}([-T,T])}.
	\end{split}
\end{equation*}

Thus, for
\begin{equation}\label{PF2}
	2C\left( \epsilon^{p_2} + T^{1- \frac{p_1+2}{\gamma_{b_1}}}a^{p_1 } \right)  < \frac{1}{2},
\end{equation}
we have that $\Gamma$ is a contraction. Now, fixing $\epsilon $ sufficiently small and choosing $a$ satisfying \eqref{PF2}, where $T$ is given by $\epsilon$, we have that \eqref{PF1} is verified. 

Therefore, by the Point Fix Theorem, we establish the existence and uniqueness of the solution to \eqref{IVP2}. To prove the continuous dependence of $\Gamma(u)$ with respect to $u_0$, note that if $u$ and $v$ are the corresponding solutions of \eqref{IVP2} with initial data $u_0$ and $v_0$, respectively, and $\|u_0 - v_0\|$ is sufficiently small, the same argument used in \eqref{lwp22} and \eqref{lwp12} implies
\[
\|u-v\|_{\Sigma} \leq K\|u_0 - v_0\|_{H^1},
\]
which shows that the function $u_0 \mapsto u(t)$ from a neighborhood $B$ of $u_0$ in $H^{1}(\mathbb{R}^{N})$ on $C([-T,T]: H^1(\mathbb{R}^{N})) \cap X^1_{b_1, b_2}([-T,T])$ is Lipschitz.

In turn, given $T > 0$, if we choose $a = 2C \|e^{it\Delta}u_0\|_{X^1_{b_1, b_2}([-T,T])}$ and take $\eta = \eta(T)$ satisfying $a \leq 2C\eta(T)$ and $2C \left( T^{1- \frac{p_1+2}{\gamma_{b_1}}}a^{p_1 } + a^{p_2}\right) \leq \frac{1}{4}$, the inequalities \eqref{PF2} and \eqref{PF1} hold with $\eta$ replaced by $\epsilon$. By a similar argument as before, the second part of the theorem has been proven.
\end{proof}

\begin{proposition}[Blow-up alternative]
	Let $(-T_{\min}, T_{\max})$ be the maximal time interval on which the unique strong $H^{1}_x$-solution $u$ of \eqref{IVP2} is well-defined. Then, $u \in S^{1}(I \times \mathbb{R}^{N})$ for every compact time interval $I \subset (-T_{\min}, T_{\max})$, and the following properties hold:
	\begin{itemize}
		\item[i)] If $T_{\max} < \infty$, then 
		\[
		\lim_{t \rightarrow T_{\max}} \|\nabla u(t)\| = \infty \quad \text{or} \quad \|\nabla u\|_{S(L^{2}, (0, T_{\max}))} = \infty.
		\]
		\item[ii)] If $T_{\min} < \infty$, then 
		\[
		\lim_{t \rightarrow T_{\min}} \|\nabla u(t)\| = \infty\quad \text{or} \quad \|\nabla u\|_{S(L^{2}, (T_{\min}, 0))} = \infty.
		\]
	\end{itemize}
\end{proposition}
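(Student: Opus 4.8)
The plan is to prove the blow-up alternative by combining the local well-posedness result (Proposition \ref{lwp}), the nonlinear estimates of Corollary \ref{nlestimates2} (in particular the $S^1$-estimate with its favorable power of $|I|$ in the first nonlinearity), and a standard ``local theory plus a priori bound $\Rightarrow$ global'' continuation scheme. First I would observe that the persistence of $S^1$-regularity on compact subintervals follows by covering any compact $I \Subset (-T_{\min}, T_{\max})$ by finitely many short time intervals and applying the Strichartz and nonlinear estimates iteratively, so that $\|\nabla u\|_{S(L^2, I)} < \infty$ on each; the main point is then the two dichotomies.

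I would prove the statement for $T_{\max}$ (the case $T_{\min}$ being identical by time reversal). Argue by contradiction: suppose $T_{\max} < \infty$, yet $\limsup_{t\to T_{\max}}\|\nabla u(t)\| =: K < \infty$ and $\|\nabla u\|_{S(L^2,(0,T_{\max}))} =: K' < \infty$. The strategy is to show these two bounds let one solve the Cauchy problem from a time $t_0$ close to $T_{\max}$ on an interval of length bounded below \emph{uniformly in $t_0$}, which extends the solution past $T_{\max}$ --- a contradiction with maximality. Concretely, fix $t_0 < T_{\max}$ and write the Duhamel formula on $[t_0, t_0 + \delta]$. Using Strichartz (Lemma \ref{strichartzest}) and Corollary \ref{nlestimates2} for the $S^1(L^2)$-norm of the nonlinearity, one gets a bound of the form
\begin{equation*}
\|\nabla u\|_{S(L^2,[t_0,t_0+\delta])} \lesssim \|\nabla u(t_0)\| + \|\nabla u\|_{W([t_0,t_0+\delta])}^{p_2}\|\nabla u\|_{V([t_0,t_0+\delta])} + \delta^{1-\frac{p_1+2}{\gamma_{b_1}}}\|\nabla u\|_{X_{b_1,b_2}}^{p_1+1}.
\end{equation*}
Since $\|\nabla u\|_{S(L^2,(0,T_{\max}))} < \infty$, the quantities $\|\nabla u\|_{W([t_0,T_{\max}])}$ and $\|\nabla u\|_{V([t_0,T_{\max}])}$ (which are dominated by the full Strichartz norm, being $L^2$-admissible) tend to $0$ as $t_0 \to T_{\max}$ by dominated convergence. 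Hence for $t_0$ close enough to $T_{\max}$ the energy-critical contribution is a small perturbation, while the subcritical term is controlled by the smallness of $\delta$; a continuity/bootstrap argument then closes the estimate on a fixed-length interval $[t_0, t_0+\delta_0]$ with $\delta_0$ independent of $t_0$, and in particular $\|e^{it\Delta}u(t_0)\|_{X^1_{b_1,b_2}}$ is small, so Proposition \ref{lwp} applies with a uniform existence time. Choosing $t_0$ with $t_0 + \delta_0 > T_{\max}$ contradicts maximality, proving that one of the two alternatives must hold.

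The main obstacle is the precise handling of the energy-critical piece: unlike the subcritical nonlinearity, its contribution carries no power of $|I|$ (see Remark \ref{remark2}), so smallness cannot come from shrinking the interval --- it must come from the decay of the Strichartz norm of $\nabla u$ near $T_{\max}$, which in turn relies crucially on the standing assumption $\|\nabla u\|_{S(L^2,(0,T_{\max}))} < \infty$ in the contradiction hypothesis. One must be careful that the small-data local existence time in Proposition \ref{lwp} depends on $\|e^{it\Delta}u(t_0)\|_{X^1_{b_1,b_2}}$, not merely on $\|\nabla u(t_0)\|$; bridging this gap --- showing the free evolution's $X^1_{b_1,b_2}$-norm starting from $t_0$ is small for $t_0$ near $T_{\max}$ --- is exactly where the vanishing of the tail Strichartz norms and the smallness of $\delta$ are used together. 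The $L^2$-level statements ($\|u\|_{S(L^2,I)}<\infty$ on compact $I$, and the corresponding mass-level bound) follow routinely once the $\dot H^1$-level bounds are in hand, using estimate \eqref{estnl10} and mass conservation.
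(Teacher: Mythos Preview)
Your proposal is correct and follows essentially the same contradiction-plus-continuation strategy as the paper: assume both alternatives fail, then exploit the finiteness of the Strichartz norm on $(0,T_{\max})$ together with the $H^1$ bound to relaunch the local theory from some $t_0$ near $T_{\max}$ and push past the maximal time. In fact you are more explicit than the paper on the key point---the paper simply invokes a ``uniform existence time $T(M)$'' from Proposition~\ref{lwp}, whereas you correctly isolate the mechanism that makes this work in the energy-critical regime, namely that the tail $\|\nabla u\|_{W([t_0,T_{\max}))}$ vanishes as $t_0\to T_{\max}$, which is what forces the free evolution $e^{i(t-t_0)\Delta}u(t_0)$ to be small in $X^1_{b_1,b_2}$ and allows the second clause of Proposition~\ref{lwp} to apply with a uniform interval length.
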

\begin{proof}
We will prove item (i). The second item is analogous.

Since the solution $u$ of \eqref{IVP2} belongs to $C([0,T_{\max}]: H^1(\mathbb{R}^{N})) \cap X^1_{b_1, b_2}([0,T_{\max}])$ and $T_{\max}$ satisfies \eqref{PF2}, we have
\[
\|u\|_{S^1([0, T_{\max}] \times \mathbb{R}^{N})} \leq C\|u_0\|_{H^1_x} + C\|u\|_{X^1_{b_1, b_2}([0, T_{\max}])} \left(\|\nabla u\|_{X_{b_1, b_2}([0, T_{\max}])}^{p_2} + T_{\max}^{1- \frac{p_1+2}{\gamma_{b_1}}}\|\nabla u\|_{X_{b_1, b_2}([0, T_{\max}])}^{p_1}\right) < \infty.
\]

Suppose that there exists a sequence $\{t_j\}$ and a constant $M$ such that $t_j \rightarrow T_{\max}$ and 
$
\|\nabla u(t_j)\| +\|\nabla u\|_{X_{b_1,b_2}} 
\leq M
$
for all $j \in \mathbb{N}$. Note that if $\|\nabla u(t_j)\| \leq M$, from Strichartz estimates  $\|e^{i(t-t_j)\Delta}\nabla u \|_{X_{b_1, b_2}} \lesssim \|\nabla u(t_j)\|$. So, there exists a $\tilde{T}$ sufficently small such that $\|e^{i(t-t_j)\Delta}\nabla u_0\|_{X_{b_1, b_2}([-\tilde{T}, \tilde{T}])}$ is small. Let $T(M)$ denote the existence time obtained by Theorem \ref{lwp} for all initial data bounded above by $M$, and let $k$ be such that $t_k + T(M) > T_{\max}$.

There exists a unique strong $H^{1}_x$-solution $v_k$ of \eqref{IVP2} on $[t_k, T(M)]$ starting from $u(t_k)$. Gluing together $u$ with $v_k$, we obtain a solution of \eqref{IVP2} defined on $[0, t_k + T(M)]$, which is impossible since \(T_{\max}\) is the maximal time of existence..
\end{proof}

As a complement to the previous proposition, we have the following important lemma.

\begin{proposition}\label{Blow-upcriterion}
	Let $u_0 \in H^{1}_x$ and let $u$ be the unique strong $H^{1}_x$-solution of \eqref{IVP2} on the slab $[0, T_0] \times \mathbb{R}^{N}$ such that
	$$
	\|u\|_{X_{b_1,b_2}([0,T_0])} < \infty.
	$$
	Then, there exists $\delta = \delta(u_0) > 0$ such that the solution $u$ extends to a strong $H^{1}_x$-solution on the slab $[0, T_0 + \delta] \times \mathbb{R}^{N}$.
\end{proposition}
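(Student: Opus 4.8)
The plan is to show that finiteness of the $X_{b_1,b_2}([0,T_0])$-norm (together with the a priori control it gives on all Strichartz norms) rules out the blow-up alternative at $T_0$, hence the solution must extend past $T_0$. First I would observe that, by Corollary~\ref{nlestimates2} and the Strichartz estimates exactly as in \eqref{lwp22}, the hypothesis $\|u\|_{X_{b_1,b_2}([0,T_0])}<\infty$ upgrades to $\|u\|_{S^1(L^2,[0,T_0])}<\infty$; in particular $\nabla u \in C([0,T_0];L^2)$, so $u(T_0)\in H^1$ is well defined and $\sup_{t\in[0,T_0]}\|u(t)\|_{H^1}=:M_0<\infty$. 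The key point to extract is that the \emph{tail} of the $X_{b_1,b_2}$-norm is small: since $\|u\|_{X_{b_1,b_2}([0,T_0])}<\infty$, for any $\epsilon>0$ there is $t_*<T_0$ with $\|u\|_{X_{b_1,b_2}([t_*,T_0])}<\epsilon$, and by Strichartz $\|e^{i(t-t_*)\Delta}u(t_*)\|_{X^1_{b_1,b_2}(\R)} \lesssim \|u(t_*)\|_{H^1}\le M_0$, which is not small but is bounded; what we actually need is smallness of $\|e^{i(t-t_*)\Delta}u(t_*)\|_{X^1_{b_1,b_2}([t_*,t_*+\tau])}$ for $\tau$ small, which holds by the monotone convergence / absolute continuity argument cited from \cite{PL}, Proposition~5.1.

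The second step is the actual continuation. Using the small-data portion of Proposition~\ref{lwp} — the clause asserting that if $\|e^{it\Delta}u_0\|_{X^1_{b_1,b_2}([-T,T])}\le\eta(T)$ then there is a strong solution on $[-T,T]$ — applied with initial time $t_*$ and initial datum $u(t_*)$: choosing first $\tau$ and then, via the absolute-continuity statement, ensuring $\|e^{i(t-t_*)\Delta}u(t_*)\|_{X^1_{b_1,b_2}([t_*,t_*+\tau])}\le\eta(\tau)$, we obtain a strong solution $\tilde u$ on $[t_*,t_*+\tau]$ with datum $u(t_*)$. Here it is important that $\tau$ can be taken uniform over data bounded by $M_0$ in $H^1$, which is exactly how $\eta$ depends on the datum in Proposition~\ref{lwp}. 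By uniqueness (the uniqueness clause of Proposition~\ref{lwp}, applied on $[t_*,T_0]$), $\tilde u$ agrees with $u$ on $[t_*,T_0]$, so $\tilde u$ extends $u$ to $[0,t_*+\tau]$. Since $\|u\|_{X_{b_1,b_2}([t_*,T_0])}<\epsilon$ was arranged with room to spare, one can take $\tau$ with $t_*+\tau>T_0$, giving the extension to $[0,T_0+\delta]$ for some $\delta>0$; alternatively, run the blow-up alternative: neither $\|\nabla u(t)\|$ nor $\|\nabla u\|_{S(L^2,(0,T_0))}$ blows up, so $T_0<T_{\max}$ and the solution is already defined on a larger interval.

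The main obstacle, and the only place requiring care, is the passage from ``$\|u\|_{X_{b_1,b_2}}$ finite on the closed-up-to-$T_0$ slab'' to ``$\|e^{i(t-t_*)\Delta}u(t_*)\|_{X^1_{b_1,b_2}}$ small on a short interval starting at $t_*$'': the $X_{b_1,b_2}$ space mixes five space-time Lebesgue norms (the $(\gamma_{b_1},\rho_{b_1})$ pair with its $\pm$-perturbations, plus $W(I)$ and $V(I)$), and the relevant smallness for $W$ and $V$ comes from the factor $|I|^{1-\frac{p_1+2}{\gamma_{b_1}}}$ in Corollary~\ref{nlestimates2} only for the \emph{subcritical} ($i=1$) piece, while the energy-critical ($i=2$) piece carries no such gain and its smallness must come entirely from the smallness of the free evolution on the short interval. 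So one must invoke the dominated-convergence property of the $L^2$-admissible norms for $e^{i(t-t_*)\Delta}u(t_*)$ (as in \cite{PL}, Proposition~5.1) rather than any $|I|^{\theta}$ decay, exactly as was done in the proof of Proposition~\ref{lwp}. Once that is in hand, the fixed-point/continuation argument is identical to the one already run there, so I would keep the proof short: establish $u(T_0)\in H^1$ with $H^1$-bound $M_0$, invoke the small-data local theory with a time step $\tau=\tau(M_0)$, glue by uniqueness, and conclude $\delta=\tau$ (after shifting past $T_0$) works.
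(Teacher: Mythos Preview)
The paper states this proposition without proof (it appears immediately before Section~5 with no \texttt{proof} environment), so there is no ``paper's own proof'' to compare against. Your proposal is essentially correct, and the cleanest route is the one you mention only as an aside at the end: invoke the blow-up alternative (the proposition immediately preceding this one). Since $u$ is by hypothesis a strong $H^1_x$-solution on the \emph{closed} slab $[0,T_0]\times\R^N$, you already have $u\in C([0,T_0];H^1_x)$, hence $u(T_0)\in H^1_x$; applying Proposition~\ref{lwp} with initial time $T_0$ and datum $u(T_0)$ yields a solution on $[T_0,T_0+\delta]$ for some $\delta=\delta(u(T_0))>0$, and gluing by uniqueness finishes the argument. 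This is almost certainly what the authors had in mind, given the juxtaposition with the blow-up alternative and the way the proposition is invoked in the proof of Theorem~\ref{gwp}.

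One point in your main line of argument is not right and should be dropped: you assert that ``$\tau$ can be taken uniform over data bounded by $M_0$ in $H^1$, which is exactly how $\eta$ depends on the datum in Proposition~\ref{lwp}.'' This is false for an energy-critical problem. In Proposition~\ref{lwp} the local time $T=T(u_0)$ depends on the \emph{profile} of $u_0$ (through the smallness of $\|e^{it\Delta}u_0\|_{X^1_{b_1,b_2}([-T,T])}$), not merely on $\|u_0\|_{H^1}$; uniform dependence on the $H^1$-norm is precisely the content of the separate ``good local well-posedness'' Theorem~\ref{goodlwp}, which requires the sign hypotheses on $\lambda_1,\lambda_2$ and the full stability machinery. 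Fortunately you do not need uniformity here: the statement only asks for $\delta=\delta(u_0)$, and $u(T_0)$ is determined by $u_0$, so $\delta=T(u(T_0))$ suffices. Your detour through a base point $t_*<T_0$ and tail-smallness of $\|u\|_{X_{b_1,b_2}([t_*,T_0])}$ is therefore unnecessary; just start the new local solution at $T_0$.
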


\section{Global well-posedness to INLS with two power-type nonlinearities}

In this section, we will prove Theorem \ref{gwp}. Our main tools will be the uniform control of the kinetic energy for solutions to (DINLS) equation and a``good'' local well-posedness theory for the IVP \eqref{IVP2} under some condictions.

\subsection{Kinetic Energy Control}

Now, let's estimate the $\dot{H}^{1}_x$-norm of the solution $u$ to \eqref{IVP2}. From the energy conservation \eqref{energy} for solutions of \eqref{IVP2}, we have
\begin{equation}\label{gradientbounded}
	\frac{1}{2} \|\nabla u (t)\|_{L_{x}^{2}}^{2} =  E(u_0) - \frac{\lambda_1}{p_1 +2}\int_{\mathbb{R}^{N}}  |x|^{-b_1}|u(t,x)|^{p_1+2} \; dx -   \frac{\lambda_2}{p_2 +2}\int_{\mathbb{R}^{N}}|x|^{-b_2}|u(t,x)|^{p_2 +2}  \; dx.
\end{equation}

Note that if $\lambda_1$ and $\lambda_2$ are both positive, it follows immediately that we have uniform control of the kinetic energy of the solution:
\[
\|\nabla u (t)\|_{L_{x}^{2}}^{2} \leq 2E(u_0), \quad \text{for all } t \in (-T_{\min}, T_{\max}).
\]

However, if $\lambda_1 < 0$ and $\lambda_2 > 0$, we need to control the potencial intercritical  energy in terms of the energy-critical term . For this we use the following proposition.

\begin{proposition}\label{bweightednorm}
Let $\eta >0$, $p_1 < \frac{4-2b_1}{N-2}$, $p_2 = \frac{4- b_2}{N-2}$, $b_1, b_2 < \min \{2, \frac{N}{2} \}$ and $u \in L^{2}(\mathbb{R}^{N}) \cap L^{p_2+2}_{b_2}(\mathbb{R}^{N})$. If 
\begin{itemize}
    \item[i)] $p_1 < p_2 $ and $\frac{p_1 }{p_2}b_2 \leq b_1 \leq b_2   $ or \\
    \item[ii)] $ b_2 < b_1 < \frac{N(p_2-p_1) }{p_2+2} $,
    %and $p_1 + 2 < (p_2+2) \left( 1 -  \frac{b_1}{N} \right)$,
\end{itemize}

then $u \in L^{p_1+2}_{b_1}(\mathbb{R}^{N})$ and 
\begin{equation}\label{weakinterpolation}
\|u\|_{ L^{p_1+2}_{b_1}(\mathbb{R}^{N})}^{p_1+2} \leq \, \eta \, \|u\|_{ L^{p_2+2}_{b+2}(\mathbb{R}^{N})}^{p_2+2} + C \left( \eta \ , \ \|u\|_{ L^{2}(\mathbb{R}^{N})}\right).
\end{equation}

\end{proposition}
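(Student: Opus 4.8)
The plan is to split the integral defining $\|u\|_{L^{p_1+2}_{b_1}}^{p_1+2}$ over the unit ball $B_1$ and its complement $B_1^c$, and on each region apply Hölder's inequality with carefully chosen exponents so as to land on the two controlling norms $\|u\|_{L^{p_2+2}_{b_2}}$ and $\|u\|_{L^2}$. Since on $B_1$ the weight $|x|^{-b_1}$ is worse than $|x|^{-b_2}$ precisely when $b_1 \leq b_2$, and on $B_1^c$ the opposite comparison holds, the two hypotheses (i) and (ii) are tailored to make one of the two regions trivial and force the genuine interpolation to happen on the other. Concretely, on $B_1$ I would write, for exponents $\theta \in (0,1)$ and conjugate pair $(q,q')$ to be chosen,
\[
\int_{B_1} |x|^{-b_1}|u|^{p_1+2}\,dx \;=\; \int_{B_1} \big(|x|^{-b_2}|u|^{p_2+2}\big)^{\theta}\,|u|^{(p_1+2)-\theta(p_2+2)}\,|x|^{-b_1+\theta b_2}\,dx,
\]
and then apply Hölder so that the first factor contributes $\|u\|_{L^{p_2+2}_{b_2}}^{\theta(p_2+2)}$, the second contributes a power of $\|u\|_{L^2}$, and the residual weight $|x|^{\theta b_2 - b_1}$ is integrable on $B_1$ against whatever Hölder exponent remains. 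The integrability of that residual weight on $B_1$ requires an exponent condition that, after bookkeeping, reduces exactly to $\frac{p_1}{p_2}b_2 \leq b_1$ in case (i) (and an analogous computation on $B_1^c$ forces $b_1 < \frac{N(p_2-p_1)}{p_2+2}$ in case (ii), where the constraint is that $|x|^{b_1 - \theta b_2}$ be integrable at infinity against the dual exponent).

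**Ordering of the steps.** First I would record the admissible range of $\theta$: one needs $0 < \theta(p_2+2) < p_1+2$ so that the leftover power $(p_1+2) - \theta(p_2+2)$ of $|u|$ is positive and can be paired with the $L^2$ norm; the natural choice making the $|u|$-exponent land on $2$ is $\theta = \frac{p_1}{p_2}$ (equivalently $\theta(p_2+2) = \frac{p_1}{p_2}(p_2+2)$ and the residual $|u|$-power is $(p_1+2) - \frac{p_1}{p_2}(p_2+2) = \frac{p_1(2 - b_2\cdot 0)}{\dots}$ — more precisely it works out to a positive number, and one then uses Young's inequality $ab \leq \eta a^{\sigma} + C(\eta) b^{\sigma'}$ to convert the product into the stated $\eta\|u\|_{L^{p_2+2}_{b_2}}^{p_2+2} + C(\eta,\|u\|_{L^2})$ form). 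Second, verify weight integrability on $B_1$: with the Hölder exponents forced by the above choice, $|x|^{-(b_1 - \theta b_2)}$ must be in $L^r(B_1)$ for the relevant $r$, which is where $N$, $b_1$, $b_2$, $p_1$, $p_2$ enter and where the hypothesis $\frac{p_1}{p_2}b_2 \leq b_1$ is consumed; the condition $b_1 \leq b_2$ then handles $B_1^c$ trivially since there $|x|^{-b_1} \leq |x|^{-b_2}$ and one bounds directly by $\|u\|_{L^{p_2+2}_{b_2}}$ via Hölder in the remaining $|u|$ power against $L^2$ on the (infinite-measure but weight-decaying) region. Third, in case (ii) repeat the symmetric argument with the roles of $B_1$ and $B_1^c$ swapped, using $b_2 < b_1$ to kill the $B_1$ contribution and the sharp decay condition $b_1 < \frac{N(p_2-p_1)}{p_2+2}$ to ensure integrability at infinity. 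Finally, collect the two regional estimates and apply Young's inequality once more with the free parameter $\eta$ to obtain \eqref{weakinterpolation}.

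**Main obstacle.** The routine part is the Hölder/Young bookkeeping; the delicate part is matching the exponents so that the weight integrability conditions collapse \emph{exactly} to the stated hypotheses rather than something weaker or stronger. In particular one must be careful that the same $\theta$ simultaneously (a) makes the residual $|u|$-power nonnegative, (b) keeps all Hölder exponents $\geq 1$, and (c) renders the leftover weight integrable on the correct region — there is essentially a one-parameter family and one has to check it is nonempty precisely under (i) or (ii). A secondary subtlety is the dimensional constraint $b_i < \min\{2, \frac{N}{2}\}$: the bound $b_i < N$ is what guarantees $|x|^{-b_i} \in L^1_{\mathrm{loc}}$ near the origin, which is implicitly needed for the splitting to make sense, while the appearance of $\frac{N(p_2-p_1)}{p_2+2}$ in (ii) is exactly the threshold at which $|x|^{b_1 - \theta b_2}$ ceases to decay fast enough at infinity after the $L^2$ pairing — I would double-check this threshold by a direct scaling count on $B_1^c$ before committing to the final exponent choice.
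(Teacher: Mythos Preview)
Your overall architecture matches the paper's: split $\int |x|^{-b_1}|u|^{p_1+2}\,dx$ over $B_1$ and $B_1^c$, and on each piece use H\"older together with Young's inequality $ab \leq \eta a^s + C(\eta)b^{s'}$ to land on $\|u\|_{L^{p_2+2}_{b_2}}^{p_2+2}$ and a power of $\|u\|_{L^2}$. The paper even uses essentially your choice $\theta=p_1/p_2$ on one of the regions.

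However, there is a concrete sign error that swaps the roles of the two hypotheses in case~(i). You write that on $B_1^c$ one has $|x|^{-b_1}\leq |x|^{-b_2}$ when $b_1\leq b_2$; this is false (for $|x|>1$ the map $a\mapsto |x|^{-a}$ is \emph{decreasing}, so $b_1\leq b_2$ gives $|x|^{-b_1}\geq |x|^{-b_2}$). Likewise on $B_1$ the weight $|x|^{-b_1}$ is \emph{better}, not worse, than $|x|^{-b_2}$ when $b_1\leq b_2$. Consequently the assignments must be flipped: the paper uses $b_1\leq b_2$ on $B_1$ (where $|x|^{-b_1}\leq |x|^{-b_2}$, so one may replace the weight and apply Young with $s=\frac{2p_2+1}{2p_1+1}$), and uses $\frac{p_1}{p_2}b_2\leq b_1$ on $B_1^c$, writing $|x|^{-b_1}=|x|^{-b_1+\frac{p_1}{p_2}b_2}\,|x|^{-\frac{p_1}{p_2}b_2}$ so that the first factor is $\leq 1$ on $\{|x|>1\}$ and the second combines with $|u|^{p_1}$ to produce $(|x|^{-b_2}|u|^{p_2})^{p_1/p_2}$.

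The same reversal infects your plan for case~(ii): you cannot ``kill the $B_1$ contribution'' using $b_2<b_1$, since on $B_1$ this makes $|x|^{-b_1}$ \emph{more} singular than $|x|^{-b_2}$, not less. In the paper neither region is trivial in case~(ii): on $B_1$ one separates the weight via H\"older (using $|x|^{-b_1}\in L^{N/(b_1+\varepsilon)}(B_1)$) and interpolates $|u|$ between $L^2$ and $L^{p_2+2}$, while on $B_1^c$ a more delicate two-endpoint interpolation is needed, and it is precisely there that the threshold $b_1<\frac{N(p_2-p_1)}{p_2+2}$ arises. So your instinct about where that threshold comes from is right, but it does not come from a ``symmetric'' argument with a trivial inner region.
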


\begin{proof}

Let us remember the elementary inequality 
\begin{equation}\label{holderestimate}
AB \leq \left(\frac{A^{s}}{s} + \frac{B^{s'}}{s'} \right)
\end{equation}
valid for any $A$, $B>0$, with $1 < s < \infty$  and $s'$ the dual exponent to $s$. 
We split the $L^{p_1+2}_{b_1}(\mathbb{R}^{N})$-norm in two pieces, we must be estimate
$$
\|u\|_{ L^{p_1+2}_{b_1}(\mathbb{R}^{N})}^{p_1+2} = \displaystyle \int_{|x|\leq 1} \; |x|^{-b_1} |u|^{p_1 +1}|u| \, dx +  \displaystyle \int_{|x|> 1} \, |x|^{-b_1} |u|^{p_1}|u|^{2} \,  dx =: I + II.
$$
\textbf{(i)} $p_1 < p_2$ and $\frac{p_1 }{p_2}b_2 \leq b_1 \leq b_2$.\\
In the first integral, since $b_1 \leq b_2$ we can replace $-b_1$ by $-b_2 +t$ for some $t> 0$ and to apply \eqref{holderestimate} for $A=\eta^{\frac{1}{s}}|u|^{p_1+1}$, $B=\eta^{-\frac{1}{s}}$ and $s= \frac{2p_2+1}{2p_1+1} >1$, to obtain
\begin{equation*}
\begin{split}
I \leq & \displaystyle \int_{|x| \leq 1} \; |x|^{-b_2} \left[ \eta|u|^{p_2+1} + \eta^{-\frac{s'}{s}} \right]|u| \, dx  \leq  \eta \displaystyle \int_{|x| \leq 1} \; |x|^{-b_2} |u|^{p_2+2} \; dx + C_\eta \displaystyle \int_{|x| \leq 1} \; |x|^{-b_2}|u|\, dx\\
\leq & \; \eta\,  \|u\|_{ L^{p+2+2}_{b_2}(\mathbb{R}^{N})}^{p_2+2} + C_\eta \, \||x|^{-b_2 } \|_{L^{2}(\{ |x| \leq 1 \})} \, \|u\|_{L^{2}(\mathbb{R}^{N})} \leq  \eta \,  \|u\|_{ L^{p_2+2}_{b_2}(\mathbb{R}^{N})}^{p_2+2} + C_\eta  \, \|u\|_{L^{2}(\mathbb{R}^{N})},
\end{split}
\end{equation*}
where in the last inequality we use the fact the $b_2 < \frac{N}{2}$. 

In turn, we rewrite the integral $II$ by
$$
II = \displaystyle \int_{|x| >1 } \; |x|^{-b_1 + \frac{p_1b_2}{p_2}} \left[ |x|^{-\frac{p_1b_2}{p_2}}|u|^{p_1}\right] |u|^{2} \; dx \leq  \displaystyle \int_{|x| >1 } \; \left[ |x|^{-\frac{p_1b_2}{p_2}}|u|^{p_1}\right] |u|^{2} \; dx,
$$
where the last inequality is derived from  $\frac{p_1b_2}{p_2}\leq b_1$. Now, taking $A=  \eta^{\frac{1}{s}}|x|^{-\frac{p_1b_2}{p_2}}|u|^{p_1}$, $B=\eta^{-\frac{1}{s}}$ and $s= \frac{p_2}{p_1} >1$ in \eqref{holderestimate} we obtain
$$
II \leq  \eta \, \|u\|_{ L^{p_2+2}_{b_2}(\mathbb{R}^{N})}^{p_2+2} +  C_\eta \, \|u\|_{L^{2}(\mathbb{R}^{N})}^{2}
$$
and the proposition is showed for this case.

\textbf{(ii)} $b_2 < b_1 <  \frac{N(p_2-p_1)}{p_2+2}$.\\

In this case we have
\begin{equation*}
\begin{split}
    I\leq  & \left(\int_{|x|< 1} |x|^{-b_1(\frac{N}{b_1+\varepsilon})} \, dx \right)^{\frac{b_1 + \varepsilon }{N}} \left( \int_{ |x|< 1 } |u|^{\frac{N(p_1+2)}{N- b_1- \varepsilon}}\, dx \right)^{\frac{N - b_1 - \varepsilon }{N}}  
\leq C \left( \int_{ |x|< 1 } |u|^{\frac{N(p_1+2)}{N- b_1- \varepsilon}} \right)^{\frac{N - b_1 - \varepsilon }{N}},
\end{split}
\end{equation*}
where $\varepsilon$ chosen sufficiently small satisfying
\begin{equation}\label{cond 1epsilon}
\varepsilon < N \left[ 1 - \frac{b_1}{N} - \frac{p_1+2}{p_2+2}\right].
\end{equation}

By interpolation, since that $b_2 < \min\{2, \frac{N}{2}  \}$ and $p_1+2 < (p_2+2) \left(1 -  \frac{b_1}{N}\right)$ we obtain from last inequality that
\begin{equation*}
    \begin{split}
        I \leq C \|u\|^{\theta(p_1+2)} \left( \int_{ |x|< 1 } |u|^{p_2+2} \right)^{\frac{(p_1+2)}{p_2+2}(1-\theta)}  \leq C \|u\|^{\theta(p_1+2)}  \|u\|_{L^{p_2+2}_{b_2}}^{(p_1+2)(1-\theta)} .
    \end{split}
\end{equation*}
where 
\begin{equation}\label{deftheta}
\theta = \frac{2}{p_2} \left[ \frac{p_2+2}{p_1+2}\left( 1-   \frac{b_1}{N} \right) -1 - \varepsilon \frac{p_2+2}{N(p_1+2)} \right]
\end{equation}
Thus, using inequality \eqref{holderestimate} with $A = \eta^{\frac{1}{s}}\|u\|_{L^{p_2+2}_{b_2}}^{(p_1+2)(1-\theta)}$, $B=\eta^{-\frac{1}{s}}C \|u\|^{\theta(p_1+2)} $ and $s= \frac{p_2+2}{(p_1+2)(1- \theta) } >1$, we estimate
$$
I \leq \eta \|u\|_{L^{p_2+2}_{b_2}}^{(p_2+2)]} + C( \eta \, , \,  \|u\| )
$$
Now, we will estimate the integral II. For this, note that
\begin{equation}\label{interpI}
\begin{split}
\int_{|x|>1}|x|^{-b_1}|u|^{\frac{2(N-b_1)}{N-2 \varepsilon}} \leq & \int_{|x|>1}|x|^{-b_1+2 \varepsilon b_2\frac{(N-b_1)(N-2)}{(N-2 \varepsilon)(N- b_2)}}\left||x|^{-b_2\frac{(N-2)}{2(N- b_2)}}u\right|^{4\varepsilon\frac{(N-b_1)}{(N-2 \varepsilon)}}|u|^{\frac{2(N-b_1)}{N-2 \varepsilon}(1- 2 \varepsilon)}\,dx\\
\leq & \, C_N \|u\|_{L^2}^{\frac{4 \varepsilon (N-b_1)}{N-2 \varepsilon}}\left[\int|x|^{-b_2}|u|^{p_2+2}\,dx \right]^{(1- 2\varepsilon) \frac{2(N-b_1)}{(N-2 \varepsilon)(p_2+2)}},
\end{split}
\end{equation}
where
$$\frac{1}{N}\left(b_1 -(1-2 \varepsilon) b_2\frac{(N-b_1)(N-2)}{(N-2 \varepsilon)(N- b_2)}\right)>1- 2\varepsilon\frac{N- b_1}{N- 2\varepsilon } - (1-2\varepsilon) \frac{(N-b_1)(N-2)}{(N-2 \varepsilon)(N- b_2)} >0,$$
 what is always holds for $b_2 < \min \{2 , \frac{N}{2} \} $, $b_1 < N$ and $\varepsilon > 0 $ sufficiently small satisfying
 \begin{equation}\label{cond 2epsilon}
 \varepsilon < \frac{N(2-b_2)(b_1(N_2)}{2[(N-b_1)(2-b_2)+2(N-b_2)]}
 \end{equation}

 In the similar way, we obtain
\begin{equation}\label{interpII}
\begin{split}
\int_{|x|>1}|x|^{-b_1}|u|^{\frac{2(N-b_1)}{N-2+2 \varepsilon}} 
\leq & \, C_N \|u\|_{L^2}^{\frac{4\varepsilon(N-b_1)}{N-2+2 \varepsilon}   }\left[\int|x|^{-b_2}|u|^{p_2+2}\,dx \right]^{(1- 2 \varepsilon)\frac{2(N-b_1)}{(N-2+2 \varepsilon)(p_2+2)}},
\end{split}
\end{equation}
with
$$\frac{1}{N}\left(b_1 -(1-2 \varepsilon )b_2\frac{(N-b_1)(N-2)}{(N-2-2 \varepsilon)(N- b_2)}\right)>1- \frac{2\varepsilon (N- b_1)}{N-2+ 2\varepsilon } - (1-2 \varepsilon )\frac{(N-b_1)(N-2)}{(N-2+2 \varepsilon)(N- b_2)} >0. $$
We can reduce the condition above to $b_2 < b_1 < N$, $b_2 < \min \{ 2, \frac{N}{2}\}$ and 
\begin{equation}\label{cond 3epsilon}
   \begin{split}
0<\varepsilon<\frac{(N-2)(b_1-b_2)}{2[(N-b_1)(2-b_2)-(N-b_2)]}.  
   \end{split}
\end{equation}

By taking $\varepsilon $ satisfying \eqref{cond 2epsilon} and \eqref{cond 3epsilon}, and so interpolation \eqref{interpI} and \eqref{interpII} we obtain
\begin{equation}\label{interpIII}
\|u\|_{L_{b_1}^{p_{\beta}} (\{|x|>1\}) } \leq C \| u \|^{2 \varepsilon} \|u\|_{L_{b_2}^{p_2+2}  }^{1-2 \varepsilon} 
\end{equation}
where
$$
\frac{1}{p_{\beta}} = \beta \frac{N - 2 \varepsilon}{2( N -b_1)}+ (1-\beta) \frac{N -2+ 2 \varepsilon}{2( N -b_1)}.
$$
for any $\beta \in [0,1]$. In particular, we have \eqref{interpIII} holds for any $p_{\beta}$ satisfying $p_{\beta}+2 <  (p_2+2) \left(1- \frac{b_1}{N}\right) $. More precisely, for any $b_1 < \frac{N(p_2-p_1)}{p_2+2}.$ we have 
$$
II \leq C \| u \|^{(p_1+2)2\varepsilon} \|u\|_{L_{b_2}^{p_{2}+2}}^{(p_1+2 )(1-2\varepsilon) }  \leq  \eta \|u\|_{L^{p_2+2}_{b_2}}^{p_2+2} + C( \eta \, , \,  \|u\| ).
$$
\end{proof}

\begin{corollary}\label{benergycinetic}
	Consider the initial value problem (IVP) \eqref{IVP2} with \(0 < p_1 < \frac{4-2b_1}{N-2}\) and \(p_2 = \frac{4-2b_2}{N-2}\). Then, for all times \(t\) for which the solution is defined, we have
	\begin{equation}
		\|\nabla u(t)\|^2 \leq C(E(u_0), M(u_0))
	\end{equation}
	uniformly in time, under each of the following conditions
	\begin{itemize}
		\item[i)] \(\lambda_1, \lambda_2 > 0\);
		\item[ii)] \(\lambda_1 < 0\), \(\lambda_2 > 0\), \(p_1 < p_2\), and \(\frac{p_1b_2}{p_2} \leq b_1 \leq b_2 < \min\{2, \frac{N}{2}\}\);
		\item[iii)] \(\lambda_1 < 0\), \(\lambda_2 > 0\), and \(b_2 < b_1 < \frac{N(p_2-p_1)}{p_2+2}\).
	\end{itemize}
\end{corollary}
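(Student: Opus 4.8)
The plan is to read off the kinetic-energy bound from the energy identity \eqref{gradientbounded}, the conservation of mass, and the interpolation inequality established in Proposition \ref{bweightednorm}. Case (i) is immediate: when $\lambda_1,\lambda_2>0$ both potential terms on the right-hand side of \eqref{gradientbounded} are nonnegative, so $\frac{1}{2}\|\nabla u(t)\|^2\le E(u_0)$ for every $t$ in the interval of existence, and there is nothing further to do.

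For cases (ii) and (iii) we have $\lambda_1<0$ and $\lambda_2>0$, so \eqref{gradientbounded} rewrites as
\[
\frac{1}{2}\|\nabla u(t)\|^2 \;=\; E(u_0) \;+\; \frac{|\lambda_1|}{p_1+2}\,\|u(t)\|_{L^{p_1+2}_{b_1}}^{p_1+2} \;-\; \frac{\lambda_2}{p_2+2}\,\|u(t)\|_{L^{p_2+2}_{b_2}}^{p_2+2}.
\]
The restrictions on $(p_1,p_2,b_1,b_2)$ imposed in (ii) and in (iii) are exactly hypotheses (i) and (ii) of Proposition \ref{bweightednorm}, so for any $\eta>0$ that proposition gives
\[
\|u(t)\|_{L^{p_1+2}_{b_1}}^{p_1+2} \;\le\; \eta\,\|u(t)\|_{L^{p_2+2}_{b_2}}^{p_2+2} \;+\; C\big(\eta,\|u(t)\|_{L^2}\big),
\]
and since $\|u(t)\|_{L^2}^2=M(u_0)$ is conserved the last constant depends only on $\eta$ and $M(u_0)$. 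Substituting this into the identity and choosing $\eta=\eta(\lambda_1,\lambda_2,p_1,p_2)>0$ small enough that $\frac{|\lambda_1|}{p_1+2}\,\eta\le\frac{\lambda_2}{p_2+2}$, the two critical-potential terms combine with a nonpositive coefficient and drop out, leaving
\[
\frac{1}{2}\|\nabla u(t)\|^2 \;\le\; E(u_0)+\frac{|\lambda_1|}{p_1+2}\,C\big(\eta,M(u_0)^{1/2}\big)\;=:\;C\big(E(u_0),M(u_0)\big).
\]
Since the right-hand side involves only the conserved energy and mass, this estimate is uniform over the whole interval of existence, which is the assertion of the corollary.

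The substantive content is entirely contained in Proposition \ref{bweightednorm}; the argument above is bookkeeping together with the choice of the absorption parameter $\eta$, so I do not expect any genuine obstacle here. The two points that must be verified are that the weighted-Lebesgue exponents appearing in \eqref{gradientbounded} match the hypotheses of that proposition — they do, by inspection — and that $u(t)\in L^2(\mathbb{R}^N)\cap L^{p_2+2}_{b_2}(\mathbb{R}^N)$ for each $t$ in the lifespan, which is automatic since $u(t)\in H^1(\mathbb{R}^N)$ and the energy is finite along the flow. The real difficulty, namely separating the weights $|x|^{-b_1}$ and $|x|^{-b_2}$ and trading a small multiple of the energy-critical potential energy against the mass, was already resolved in proving Proposition \ref{bweightednorm}.
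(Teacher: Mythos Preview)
Your proof is correct and follows essentially the same approach as the paper: use the energy identity \eqref{gradientbounded}, invoke Proposition \ref{bweightednorm} under the matching hypotheses, and choose the absorption parameter $\eta$ so that the critical-potential term is cancelled (the paper takes $\eta=\frac{(p_1+2)|\lambda_2|}{(p_2+2)|\lambda_1|}$, giving equality rather than inequality, but this is immaterial). There is no substantive difference.
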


\begin{proof}
	We need to prove cases \(ii)\) and \(iii)\). From \eqref{gradientbounded}, we have
	\[
	\|\nabla u(t)\|^2 = 2E(u_0) + \frac{2|\lambda_1|}{p_1+2}\|u(t)\|_{L_{b_1}^{p_1 +2}}^{p_1+2} - \frac{2|\lambda_2|}{p_2+2}\|u(t)\|_{L_{b_2}^{p_2 +2}}^{p_2+2}.
	\]
	
	Applying Proposition \ref{bweightednorm} with \(\eta = \frac{(p_1 +2)|\lambda_2|}{(p_2+2)|\lambda_1|}\), we estimate
	\begin{equation*}
		\begin{split}
			\|\nabla u(t)\|^2 &= 2E(u_0) + \frac{2|\lambda_2|}{p_2+2}\|u(t)\|_{L_{b_2}^{p_2 +2}}^{p_2+2} \\
			&\quad - \frac{2|\lambda_2|}{p_2+2}\|u(t)\|_{L_{b_2}^{p_2 +2}}^{p_2+2} + C(M(u_0)) \\
			&\leq C(M(u_0), E(u_0)),
		\end{split}
	\end{equation*}
	where the constant does not depend on time.
\end{proof}

\subsection{Good Local Well-Posedness}

We know that the solution of the initial value problem (IVP) \eqref{IVP2}, as found in Proposition \ref{lwp}, belongs to \(C([-T,T] : H^1(\mathbb{R}^N)) \cap X^1_{b_1, b_2}([-T, T])\). Now, we aim to find a time \(T = T(\|u_0\|_{H^1_x})\) such that \(u \in S^1([-T, T] \times \mathbb{R}^N)\) and
\[
\|u\|_{S^1([-T, T] \times \mathbb{R}^N)} \leq C(E, M),
\]
where \(E\) and \(M\) denote the mass and energy of the system. In other words, the existence time \(T\) of the solution for IVP \eqref{IVP2} will depend solely on the norm \(\|u_0\|_{H^1_x(\mathbb{R}^N)}\).

\begin{theorem}\label{goodlwp}
	Let \( 2 < N < 6 \), \(0 < b_1, b_2 < \min\{2, \frac{N}{2} , \frac{6-N}{2} \}\), \(0 < p_1 < \frac{4-2b_1}{N-2}\), and \(p_2 = \frac{4-2b_2}{N-2}\). Suppose 
	\begin{itemize}
		\item[i)] $\lambda_1$, $\lambda_2 >0$;
		\item[ii)] $ \lambda_1 <0$, $\lambda_2 >0$, $p_1 < p_2$ and $\frac{p_1b_2}{p_2} \leq b_1 \leq b_2$;
		\item[iii)] $ \lambda_1 <0$, $\lambda_2 >0$ and $b_2 < b_1 < \frac{N(p_2-p_1)}{p_2+2}$.
		\end{itemize}
	Given \(u_0 \in H^1_x\), there exists \(T = T(\|u_0\|_{H^1_x})\) sufficiently small such that the Cauchy problem \eqref{IVP2} has a unique strong solution in \(C([0,T], H^1_x(\mathbb{R}^N))\) satisfying
	\[
	\|u\|_{S^1([-T,T] \times \mathbb{R}^N)} \leq C(E, M).
	\]
\end{theorem}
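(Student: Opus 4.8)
The plan is to combine the local well-posedness in $X^1_{b_1,b_2}$ from Proposition \ref{lwp} with the Strichartz-based nonlinear estimates of Corollary \ref{nlestimates2} and the a priori kinetic-energy bound of Corollary \ref{benergycinetic}, so that the existence time depends only on $\|u_0\|_{H^1_x}$, and then upgrade the solution to $S^1$. The first step is to produce, for initial data with $\|u_0\|_{H^1_x}\le K$, an existence time $T=T(K)$: by the Strichartz inequality (Lemma \ref{strichartzest}) and the local theory, one picks $T$ small enough that $\|e^{it\Delta}u_0\|_{X^1_{b_1,b_2}([-T,T])}\le\eta(T)$ and simultaneously $2C\big(T^{1-\frac{p_1+2}{\gamma_{b_1}}}a^{p_1}+a^{p_2}\big)\le\frac14$ with $a\sim 2CK$; since $p_2$ is energy-critical the $|I|$-power for the second nonlinearity is zero, so the scaling-critical part is handled by smallness of $\|e^{it\Delta}u_0\|$ alone (which holds on a short interval by the dominated-convergence/limit argument of \cite{PL}, Proposition 5.1), while the subcritical part is absorbed by the explicit power of $T$. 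This yields a unique solution on $[-T,T]$ in $C([-T,T];H^1_x)\cap X^1_{b_1,b_2}([-T,T])$ by the contraction already carried out in Proposition \ref{lwp}.

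The second step is to pass from $X^1_{b_1,b_2}$-control to full $S^1(L^2,[-T,T])$-control. Writing the Duhamel formula for $u$ and applying the Strichartz estimates together with Corollary \ref{nlestimates2} for $k=0,1$, one gets
\begin{equation*}
\|u\|_{S^1(L^2,[-T,T])}\lesssim \|u_0\|_{H^1_x}+\sum_{i=1}^{2}|T|^{1-\frac{p_i+2}{\gamma_{b_i}}}\|\nabla u\|_{X_{b_1,b_2}([-T,T])}^{p_i}\|u\|_{X^1_{b_1,b_2}([-T,T])},
\end{equation*}
and since $\|u\|_{X^1_{b_1,b_2}([-T,T])}$ is already controlled by the construction in Proposition \ref{lwp} in terms of $\|u_0\|_{H^1_x}$, the right side is finite and bounded by a function of $\|u_0\|_{H^1_x}$. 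Hence $u\in S^1([-T,T]\times\mathbb{R}^N)$ with $\|u\|_{S^1([-T,T]\times\mathbb{R}^N)}\le C(\|u_0\|_{H^1_x})$.

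The final step is to replace the dependence on $\|u_0\|_{H^1_x}$ by dependence on the conserved quantities $(E,M)$. Here one invokes Corollary \ref{benergycinetic}: under each of the sign/exponent hypotheses (i)--(iii) — which are exactly the hypotheses under which the interpolation Proposition \ref{bweightednorm} controls the intercritical potential term by the critical one plus a mass term — we have $\|\nabla u(t)\|_2^2\le C(E(u_0),M(u_0))$ uniformly in time, and together with mass conservation $\|u(t)\|_{H^1_x}^2\le C(E,M)$. Choosing $T=T(C(E,M))$ from the first step and feeding the bound $\|u_0\|_{H^1_x}\le C(E,M)$ into the second step gives $\|u\|_{S^1([-T,T]\times\mathbb{R}^N)}\le C(E,M)$, as claimed. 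The main obstacle is Step 1: making the existence time genuinely depend only on $\|u_0\|_{H^1_x}$ despite the presence of the energy-critical nonlinearity, which forces one to handle that term purely through smallness of the free evolution in the critical Strichartz norm $W(I)$ on a short interval rather than through a power of $|I|$; the subcritical nonlinearity, by contrast, poses no difficulty because $1-\frac{p_1+2}{\gamma_{b_1}}>0$.
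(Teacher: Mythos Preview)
Your Step~1 contains a genuine gap: the claim that one can choose $T=T(K)$ depending only on $K=\|u_0\|_{H^1_x}$ so that $\|e^{it\Delta}u_0\|_{X^1_{b_1,b_2}([-T,T])}\le\eta$ is false for the energy-critical component. The dominated-convergence argument you cite gives, for each \emph{fixed} $u_0$, a $T=T(u_0)$ for which the critical free-evolution norm is small, but this cannot be made uniform over $\{u_0:\|u_0\|_{H^1_x}\le K\}$. Indeed, by the $\dot H^1$-scaling $u_0^\lambda(x)=\lambda^{(N-2)/2}u_0(\lambda x)$ one has $\|\nabla u_0^\lambda\|_{L^2}=\|\nabla u_0\|_{L^2}$ while $\|\nabla e^{it\Delta}u_0^\lambda\|_{W([-T,T])}=\|\nabla e^{it\Delta}u_0\|_{W([-\lambda^2T,\lambda^2T])}\to\|\nabla e^{it\Delta}u_0\|_{W(\mathbb R)}$ as $\lambda\to\infty$, so no fixed $T>0$ makes this quantity small across the whole ball. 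Your alternative route, taking $a\sim 2CK$ and requiring $2C\,a^{p_2}\le\tfrac14$, fails for the same reason: $a^{p_2}$ is a fixed number once $K$ is given and cannot be made small by shrinking $T$.

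This is precisely the obstacle the paper circumvents by invoking the stability machinery of Section~3. The paper does \emph{not} argue directly via a contraction with $T=T(K)$; instead it introduces the solution $w$ of the pure defocusing energy-critical INLS \eqref{IVP} with the same data $w_0=u_0$, uses the Strichartz bound $\|\nabla w\|_{S(L^2,[0,T])}\le C(E)$ to subdivide $[0,T]$ into $J=J(E,\eta)$ subintervals $I_j$ on which $\|\nabla w\|_{X_{b_1,b_2}(I_j)}\sim\eta$, and on each $I_j$ applies the long-time perturbation Theorem~\ref{LTP} with the subcritical term $\lambda_1|x|^{-b_1}|u|^{p_1}u$ playing the role of the error $e$. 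The key point is that the smallness of the error \emph{does} carry a positive power of $T$ (namely $T^{1-\frac{p_1+2}{\gamma_{b_1}}}$), so after the comparison one can choose $T$ small depending only on $(E,M)$ to close the iteration over the $I_j$'s. In other words, the paper transfers the burden of the critical nonlinearity to the auxiliary solution $w$, whose Strichartz control depends only on the energy, and treats the first nonlinearity as a genuinely subcritical perturbation; your proposal attempts to handle both nonlinearities simultaneously by a single fixed-point argument and therefore runs into the non-uniformity of the critical local theory.
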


\begin{proof}

By practice, consider \(\lambda_1 = \lambda_2 = 1\).

From Proposition \ref{lwp}, we may assume that there exists a strong solution \(u\) to \eqref{IVP2} on the slab \([-T,T] \times \mathbb{R}^{N}\) and show that \(u\) has a finite \(X^1_{b_1, b_2}\) bound on this slab as long as \(T = T(\|u_0 \|_{H^{1}_{x}})\) is sufficiently small.

Let \(w\) be the unique strong global solution to the energy-critical equation \eqref{IVP} with initial data \(w_0 = u_0\) at time \(t = 0\). 
From Strichartz estimates we have $\|\nabla e^{it\Delta}w_0\|_{W([-T,T])} \leq \|\nabla w_0\| < \infty$. So, we can consider $T$ sufficiently small such that $\|\nabla e^{it\Delta} w_0\|_{W([-T,T])} $ also is small and by Theorem \ref{lwpECnls} we have that \(\| \nabla w \|_{S(L^{2}, [-T, T])} \) is bounded.  

Then, as $X_{b_1, b_2}$-norm is admissible, we can subdivide \([0,T]\) into \(J = J(E, \eta)\) subintervals \(I_j = [t_j, t_{j+1}]\) such that
\begin{equation}\label{estimate1}
	\|\nabla w\|_{X_{b_1, b_2} (I_j)} \sim \eta
\end{equation}
for some small \(\eta\) to be specified later. 

The nonlinear evolution of \(w\) being small on the interval \(I_j\) implies that the linear evolution is also small on \(I_j\). In fact, by using Strichartz estimates and Corollary (\ref{cor1}), we obtain the following inequality
\begin{equation*}
	\begin{split}
		\|\nabla [e^{i(t-t_j) \Delta} w(t_j)]\|_{X_{b_1, b_2} (I_j)} \leq & \; \|\nabla w\|_{X_{b_1, b_2} (I_j)} + \||x|^{-b_2}|w|^{p_2}w\|_{2, \frac{2N}{N+2}} \\
		\leq & \; \eta + C \; \|\nabla w\|_{X_{b_1, b_2} (I_j)}^{p_2 +1} \leq  \eta + C \eta^{p_2 +1}
	\end{split}
\end{equation*}
where \(C\) is an absolute constant that depends on the Strichartz constant. By choosing \(\eta\) sufficiently small, for any \(0 \leq j \leq J\), we obtain the following estimate

\begin{equation}\label{estimate2}
	\| \nabla[ e^{i(t-t_j) \Delta}  w(t_j)]\|_{X_{b_1, b_2} (I_j)} \leq 2 \eta.
\end{equation}
To estimate \(u\) on the interval \(I_0\), recalling that \(u(0) = w(0) = u_0\), we use Strichartz estimates and Corollary \(\ref{nlestimates2}\):
\begin{equation*}
	\begin{split}
		\|\nabla u\|_{X_{b_1, b_2} (I_0)} \leq & \; \|\nabla [e^{it \Delta}  w_0]\|_{X_{b_1, b_2} (I_0)} + C \; |I_0|^{1 - \frac{p_1+2}{\gamma_{b_1}}} \| \nabla u\|_{X_{b_1, b_2} (I_0)}^{p_1+1} + \| \nabla u\|_{X_{b_1, b_2} (I_0)}^{p_2+1}  \\
		\leq & \;  2 \eta + C \; T^{1 - \frac{p_1+2}{\gamma_{b_1}}} \| \nabla u\|_{X_{b_1, b_2} (I_0)}^{p_1+1} + \|\nabla u\|_{X_{b_1, b_2} (I_0)}^{p_2+1}.
	\end{split}
\end{equation*}

Assuming that $\eta$ and $T$ are sufficiently small, a standard continuity argument then yields
\begin{equation}\label{estimate3}
	\| \nabla  u\|_{X_{b_1, b_2} (I_0)} \leq 4  \eta.
\end{equation}
Thus, by Sobolev embedding, we get
\[
\|u\|_{Z(I_0)} \leq C\|\nabla u \|_{W^0(I_0)} \leq 4C \eta.
\]
By definition of the $X_{b_1, b_2}$-norm, \eqref{boundedhipL} holds on $I=I_0$ for $L = 4 C \eta$. 
Moreover, from Lemma \ref{boundedINLScritical} and Corollary \ref{benergycinetic}, we have
\[
\|\nabla w\|_{L_t^{\infty}L_x^2} \leq C(\eta) \|\nabla w_0\|_{L_x^2} \leq C( E(u_0), M(u_0)).
\]
Therefore, we have proved that \eqref{energybound1-L} holds with $E= C(E(u_0), M(u_0))$. Also, as \eqref{energybound2-L} holds with $E'= 0$, we are in the position to apply the stability result from Theorem \ref{LTP}, provided the error, which in this case is the first nonlinearity, is sufficiently small. By Corollary \ref{nlestimates2},
\[
\|\nabla (|x|^{-b_1}|u|^{p_1}u)\|_{S'(L^{2}, I_0)} \lesssim T^{1- \frac{p_1+2}{\gamma_{b_1}}} \|\nabla u\|_{X_{b_1, b_2} (I_0)}^{p_1+1} \lesssim T^{1- \frac{p_1+2}{\gamma_{b_1}}} (4 \eta)^{p_1+1},
\]
where in the last inequality we use \eqref{estimate3}. Thus, choosing $T$ sufficiently small (depending only on the energy and the mass of the initial data), we get
\begin{equation}\label{boundedit1}
	\|\nabla (|x|^{-b_1}|u|^{p_1}u)\|_{S'(L^{2}, I_0)} \leq \varepsilon,
\end{equation}
where $\varepsilon = \varepsilon(E(u_0), M(u_0))$ is a small constant to be chosen later. Taking $\varepsilon$ sufficiently small, the hypotheses of Theorem \ref{LTP} are satisfied, which implies that the conclusion holds. In particular,
\begin{equation}\label{boundedit2}
	\|\nabla (u - w)\|_{S(L^2,I_0)} \leq \, C(E,M) \, \varepsilon,
\end{equation}
for a small positive constant $C$ that depends only on the dimension $N$.
By Strichartz estimates, \eqref{boundedit1} and \eqref{boundedit2} imply
\begin{equation}\label{boundedit3}
	\|u(t_1) - w(t_1)\|_{\dot{H}^1_x} \lesssim \, C(E,M) \,  \varepsilon ,
\end{equation}
\begin{equation}\label{boundedit4}
	\|\nabla [e^{i(t-t_1)\Delta}(u(t_1) - w(t_1))]\|_{X_{b_1, b_2}} \lesssim \,  C(E,M) \,  \varepsilon .
\end{equation}
Using \eqref{estimate2}, \eqref{boundedit3}, \eqref{boundedit4}, Corollary \ref{nlestimates2}, and Strichartz estimates, we estimate
\begin{equation*}
	\begin{split}
		\| \nabla u\|_{X_{b_1, b_2}(I_1)} \leq & \|\nabla [e^{i(t-t_1)\Delta}u(t_1)]\|_{X_{b_1, b_2}(I_1)} + CT^{1- \frac{p_1+2}{\gamma_{b_1} }}\|\nabla u\|_{X_{b_1, b_2}(I_1)}^{p_1+1} + C\| \nabla u\|_{X_{b_1, b_2}(I_1)}^{p_2+1} \\
		\leq & \| \nabla [e^{i(t-t_1)\Delta}w(t_1)]\|_{X_{b_1, b_2}(I_1)} + \| \nabla [e^{i(t-t_1)\Delta}(u(t_1)- w(t_1))]\|_{X_{b_1, b_2}(I_1)}\\
		& +  CT^{1- \frac{p_1+2}{\gamma_{b_1} }}\|\nabla u\|_{X_{b_1, b_2}(I_1)}^{p_1+1} + C\|\nabla u\|_{ X_{b_1, b_2}(I_1)}^{p_2+1}\\
		\leq & 2 \eta + C(E,M) \, \varepsilon +  CT^{1- \frac{p_1+2}{\gamma_{b_1} }}\|\nabla u\|_{X_{b_1, b_2}(I_1)}^{p_1+1} + C\|\nabla  u\|_{X_{b_1, b_2}(I_1)}^{p_2+1}.
	\end{split}
\end{equation*}
A standard continuity argument then yields
\[
\|\nabla u\|_{X_{b_1, b_2}(I_1)} \leq 4 \eta,
\]
provided $\varepsilon$ is chosen sufficiently small depending on $E$ and $M$, which amounts to taking $T$ sufficiently small depending on $E$ and $M$. Thus, \eqref{estimate3} holds with $I_0$ replaced by $I_1$, and we are again in the position to apply Theorem \ref{LTP} on $I=I_1$ to obtain
\[
\|\nabla (u-w)\|_{S(L^2 , I_1)} \leq \, C(E, M) \, \varepsilon^{2}.
\]
By induction, for every $0 \leq j \leq J$, we obtain
\begin{equation}\label{boundedit5}
	\| \nabla u\|_{X_{b_1, b_2}(I_j)} \leq 4 \eta,
\end{equation}
provided $\epsilon$ (and hence $T$) is sufficiently small depending on the energy and the mass of the initial data. Adding  \eqref{boundedit5} over all $0 \leq j \leq J$, we get
\begin{equation}\label{boundedit6}
	\| \nabla u\|_{X_{b_1, b_2}([0,T])} \leq 4 J \eta = C(E).
\end{equation}
Now, to prove that $\|u\|_{S^1(L^2,I)}  \leq C(E, M)$, we proceed as follows. 
By Strichartz estimates, Corollary \ref{nlestimates2}, Corollary \ref{benergycinetic}, \eqref{boundedit6}, and recalling that $T = T(E, M)$, we obtain
\begin{equation}\label{boundedit7}
	\|\nabla u\|_{S(L^2,[0,T])} \lesssim \|u_0\|_{\dot{H}^1_x} + T^{1- \frac{p_1+2}{\gamma_{b_1}}}\| \nabla u\|_{X_{b_1, b_2}([0,T])}^{p_1+1} + \|\nabla u\|_{X_{b_1, b_2}([0,T])}^{p_2+1} \leq C(E, M).
\end{equation}
Similarly,
\begin{equation}\label{boundedit8}
	\begin{split}
		\| u\|_{S(L^2,[0,T])} \lesssim & \, \|u_0\|_{L^2_x} + T^{1- \frac{p_1+2}{\gamma_{b_1}}}\|\nabla  u\|_{X_{b_1, b_2}([0,T])}^{p_1}\| u\|_{X_{b_1, b_2}([0,T])} \\
		& + \|\nabla  u\|_{X_{b_1, b_2}([0,T])}^{p_2}\|u\|_{X_{b_1, b_2}([0,T])}.
	\end{split}
\end{equation}
Subdividing $[0,T]$ into $N_1 \sim N_1(E, M, \delta)$ subintervals $J_k$ such that
$$
\| \nabla u \|_{X_{b_1, b_2}(J_k)} \sim \delta
$$
for some small constant $\delta > 0$, the computations that lead to \eqref{boundedit8} now yield
$$
\| u\|_{S(L^2, J_k)} \lesssim \,  M^{\frac{1}{2}} +  C(E,M) \, \delta^{p_1} \| u\|_{X_{b_1, b_2}(J_k)} + \delta^{p_2} \| u\|_{X_{b_1, b_2}(J_k)}.
$$
Choosing $\delta$ sufficiently small depending on $E$ and $M$, we obtain
$$
\| u\|_{S(L^2, J_k)} \leq \, C(E,M)
$$
on every subinterval $J_k$. Adding these bounds over all subintervals $J_k$, we get
$$
\| u\|_{S(L^2, [0,T])} \leq \, C(E,M).
$$
This concludes the proof.
\end{proof}
\begin{proof}[Proof of Theorem \ref{gwp}]
Given an initial condition \(u_0 \in H^1_x(\mathbb{R}^N)\), our objective is to prove that the corresponding solution \(u\) for the initial value problem (IVP) \eqref{IVP2} extends for all time. Suppose, by contradiction, that the maximum existence time \(T_{\max}\) for the solution \(u\) is finite.

Define \(T = T(\|u_0\|_{H^1}) > 0\) based on Theorem \ref{goodlwp}, where \(\|u\|_{S^1(L^2,[0,T])} \leq C(E,M)\), and \(C(E,M) > 0\) depends only on \(M = M[u_0]\) and \(E[u_0]\). As a reminder, the Corollary ensures uniform control of the kinetic energy of \(u\), i.e., \(\|u(t)\|_{H^1} \leq C(E,M)\) for all \(t \in (-T_{\\min}, T_{\max})\). Now, divide the interval \([0, T_{\max} - \varepsilon]\) into disjoint intervals \(J_k\) with lengths smaller than \(T\). By repeatedly applying Theorem \ref{goodlwp} to each interval \(J_k\), we deduce by the uniqueness of the solution, conservation of mass, and uniform control of kinetic energy that \(\|u\|_{S^1(L^2,[0,T_{\max}-\varepsilon])} \leq LC(E,M)\). Applying Proposition \ref{Blow-upcriterion}, we find a positive \(\delta\) such that \(u\) remains defined on \([0, T_{\max} - \varepsilon + \delta]\). By choosing \(\varepsilon < \delta\), we arrive at a contradiction. Consequently, \(u\) is a global solution.

\end{proof}
\section{Finite time Blowup}
In this session, we establish Theorem \ref{blwupresult}, which addresses the existence of blow-up solutions for the nonlinear Schrödinger (INLS) equation with two power-type nonlinearities. Our investigation begins with the virial identity, a fundamental concept that connects energy distribution to solution behavior.
\begin{lemma}\label{BLfinitelemma}
	Let $3\leq  N < 6$, $\lambda_2 < 0$, $0 < p_1 < \frac{4-2b_1}{N-2}$, $p_2 = \frac{4-2b_2}{N-2}$, and $0 < b_1, b_2 < \min \{2, \frac{6-N}{2}\}$. Consider the variance
	\begin{equation*}
		V(t) = \int_{\mathbb{R}^{N}} |x|^2 |u(x,t)|^2 \, dx,
	\end{equation*}
	where $u$ is the maximal strong solution of \eqref{IVP2}. If $u_0 \in \Sigma$, then $V \in C^2(-T_{\min}, T_{\max})$, and as a function of $t > 0$, $V$ is decreasing and concave in each of the following cases:
	\begin{enumerate}[(a)]
		\item $\lambda_1 > 0$, $p_1 < p_2$, $b_1 < b_2$, and $E(u) < 0$,
		\item $\lambda_1 > 0$, $b_1 < \frac{N(p_2-p_1)}{p_2+2}$, and $E(u) < 0$,
		\item $\lambda_1 < 0$, $\frac{4-2b_1}{N} < p_1 < p_2$, $b_1 < b_2$, and $E(u) < 0$,
		\item $\lambda_1 < 0$, $\frac{4-2b_1}{N} < p_1$, $b_1 < \frac{N(p_2-p_1)}{p_2+2}$, and $E(u) < 0$,
		\item $\lambda_1 < 0$, $p_1 < p_2$, $\frac{p_1b_2}{p_2} \leq b_1 \leq b_2$, and $E(u_0) + CM(u_0) < 0$ for some suitably large constant $C$ (depending as usual on $N$, $p_1$, $p_2$, $\lambda_1$, $\lambda_2$).
	\end{enumerate}
\end{lemma}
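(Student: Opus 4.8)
The plan is to establish the regularity statement $V\in C^{2}(-T_{\min},T_{\max})$ together with the two virial identities, and then to bound $V''(t)$ from above by a strictly negative constant in each of the five cases; strict concavity is then immediate, and the monotonicity for $t>0$ follows by combining it with the positivity of $y_{0}=y(0)$ posited in Theorem~\ref{blwupresult}. The regularity rests on propagation of the weighted norm: for $u_{0}\in\Sigma$ one has $xu\in C\big((-T_{\min},T_{\max});L^{2}(\mathbb{R}^{N})\big)$, obtained by rerunning the fixed-point argument of Proposition~\ref{lwp} with the Galilei operator $x+2it\nabla$ in place of $\nabla$ (compare \cite{cazenave03}, Ch.~6). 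Approximating $u_{0}$ in $\Sigma$ by Schwartz data — equivalently, truncating the weight by $|x|^{2}\chi(x/R)$, computing, and letting $R\to\infty$ — one then justifies differentiating twice under the integral to obtain, for $t\in(-T_{\min},T_{\max})$,
\begin{equation*}
V'(t)=4\,\Im\int_{\mathbb{R}^{N}}\overline{u}(t,x)\,x\cdot\nabla u(t,x)\,dx,\qquad V''(t)=8\|\nabla u(t)\|_{L^{2}}^{2}+\sum_{i=1}^{2}\frac{4\lambda_{i}(Np_{i}+2b_{i})}{p_{i}+2}\,P_{i}(t),
\end{equation*}
where $P_{i}(t):=\int_{\mathbb{R}^{N}}|x|^{-b_{i}}|u(t,x)|^{p_{i}+2}\,dx\ge0$, the exponent $Np_{i}+2b_{i}$ (rather than $Np_{i}$) being produced by the identity $x\cdot\nabla(|x|^{-b_{i}})=-b_{i}|x|^{-b_{i}}$.

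Since $p_{2}=\frac{4-2b_{2}}{N-2}$, one checks $Np_{2}+2b_{2}=\tfrac{4(N-b_{2})}{N-2}$ and $p_{2}+2=\tfrac{2(N-b_{2})}{N-2}$, so the $i=2$ term is exactly $8\lambda_{2}P_{2}(t)$. Eliminating $\lambda_{2}P_{2}$ by conservation of energy, $E(u_{0})=\tfrac12\|\nabla u\|_{L^{2}}^{2}+\tfrac{\lambda_{1}}{p_{1}+2}P_{1}+\tfrac{\lambda_{2}}{p_{2}+2}P_{2}$, yields the working identity
\begin{equation*}
V''(t)=-4p_{2}\,\|\nabla u(t)\|_{L^{2}}^{2}+8(p_{2}+2)E(u_{0})-\frac{4\lambda_{1}\mu}{p_{1}+2}\,P_{1}(t),\qquad \mu:=2(p_{2}+2)-(Np_{1}+2b_{1}).
\end{equation*}
Here $\mu>0$: from $p_{1}<\frac{4-2b_{1}}{N-2}$ one has $Np_{1}+2b_{1}<2p_{1}+4$, and $p_{1}<p_{2}$ holds in every case (in (b) and (d) it is forced by $0<b_{1}<\tfrac{N(p_{2}-p_{1})}{p_{2}+2}$), so $Np_{1}+2b_{1}<2p_{1}+4<2(p_{2}+2)$. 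In cases (a) and (b) ($\lambda_{1}>0$, $E(u_{0})<0$) the last term of the working identity is $\le0$, hence $V''(t)\le 8(p_{2}+2)E(u_{0})<0$. In cases (c) and (d) ($\lambda_{1}<0$, $p_{1}>\tfrac{4-2b_{1}}{N}$, $E(u_{0})<0$): since $\lambda_{2}<0$, conservation of energy gives $\tfrac{|\lambda_{1}|}{p_{1}+2}P_{1}(t)\le\tfrac12\|\nabla u(t)\|_{L^{2}}^{2}-E(u_{0})$; substituting into the working identity and using $2\mu-4p_{2}=2\big(4-(Np_{1}+2b_{1})\big)<0$ (here $p_{1}>\tfrac{4-2b_{1}}{N}$ is used) and $8(p_{2}+2)-4\mu=4(Np_{1}+2b_{1})>0$ leads to $V''(t)\le 4(Np_{1}+2b_{1})E(u_{0})<0$.

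Case (e) ($\lambda_{1}<0$, $p_{1}<p_{2}$, $\tfrac{p_{1}b_{2}}{p_{2}}\le b_{1}\le b_{2}$) is the delicate one, since one may no longer assume $p_{1}>\tfrac{4-2b_{1}}{N}$. Here I would instead substitute $\|\nabla u\|_{L^{2}}^{2}=2E(u_{0})+\tfrac{2|\lambda_{1}|}{p_{1}+2}P_{1}+\tfrac{2|\lambda_{2}|}{p_{2}+2}P_{2}$ into the term $8\|\nabla u\|_{L^{2}}^{2}$ of the original expression for $V''$, obtaining
\begin{equation*}
V''(t)=16E(u_{0})+\frac{4|\lambda_{1}|}{p_{1}+2}\big(4-(Np_{1}+2b_{1})\big)P_{1}(t)-\frac{8p_{2}|\lambda_{2}|}{p_{2}+2}P_{2}(t).
\end{equation*}
If $4-(Np_{1}+2b_{1})\le0$ this is already $\le16E(u_{0})<0$. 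Otherwise, Proposition~\ref{bweightednorm}(i) — whose hypotheses are exactly $p_{1}<p_{2}$, $\tfrac{p_{1}b_{2}}{p_{2}}\le b_{1}\le b_{2}$, $b_{i}<\min\{2,\tfrac{N}{2}\}$ — gives $P_{1}(t)\le\eta\,P_{2}(t)+C\big(\eta,M(u_{0})\big)$ for any $\eta>0$, the mass being conserved; choosing $\eta$ small enough that $\tfrac{4|\lambda_{1}|}{p_{1}+2}\big(4-(Np_{1}+2b_{1})\big)\eta\le\tfrac{8p_{2}|\lambda_{2}|}{p_{2}+2}$ absorbs the two $P_{2}(t)$ contributions and leaves $V''(t)\le16E(u_{0})+C\big(M(u_{0})\big)$, which is strictly negative under the hypothesis $E(u_{0})+CM(u_{0})<0$ of case (e). Thus in all five cases $V''(t)\le-c<0$ on $(-T_{\min},T_{\max})$, so $V$ is strictly concave, and decreasing for $t>0$ as soon as $V'(0)\le0$, which holds under the hypothesis $y_{0}>0$ of Theorem~\ref{blwupresult}. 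I expect the main obstacle to be the rigorous justification of $V\in C^{2}$ and of the virial identities for data merely in $\Sigma$ (propagation of the weighted norm and the truncation/limiting passage), together with, in case (e), arranging the interpolation absorption so that the constant it produces genuinely matches the threshold $E(u_{0})+CM(u_{0})<0$ stated in the lemma.
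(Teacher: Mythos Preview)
Your proof is correct and rests on the same ingredients as the paper's: the virial identities for $V'$ and $V''$, elimination via conservation of energy, and Proposition~\ref{bweightednorm} for case~(e). The bookkeeping differs slightly. You always eliminate the critical term $\lambda_{2}P_{2}$ to reach the unified ``working identity'' and then, in cases (c)--(d), control $P_{1}$ from above via energy; the paper instead eliminates $P_{2}$ for (a)--(b) but $P_{1}$ for (c)--(d). In case~(e) your elimination of $\|\nabla u\|^{2}$ is more direct than the paper's auxiliary parameter $\theta=\tfrac{2(2+\varepsilon)}{Np_{2}+2b_{2}}$, and your observation that the specific constraints on $b_{1},b_{2}$ in (a)--(d) enter only through $p_{1}<p_{2}$ (hence $\mu>0$) is a mild sharpening.

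The one substantive difference is the \emph{form} of the conclusion: you obtain $V''(t)\le -c<0$ with $c$ a fixed positive constant, whereas the paper proves $y'(t)\ge c\,\|\nabla u(t)\|^{2}$ (equivalently $V''(t)\le -4c\,\|\nabla u(t)\|^{2}$). Both give the concavity asserted in the lemma, but they feed different blow-up arguments for Theorem~\ref{blwupresult}: the paper combines its inequality with $y(t)\le \|xu(t)\|\,\|\nabla u(t)\|$ to derive an ODE for $y$, while your constant bound supports the classical Glassey argument ($V(t)\le V(0)+V'(0)t-\tfrac{c}{2}t^{2}$ becomes negative in finite time). Either route closes Theorem~\ref{blwupresult}.
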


\begin{proof}
As well as in \cite{farah16}, we can easily prove that for all \(t \in (-T_{\min}, T_{\max})\):
\begin{equation}\label{derivativevariance}
	V'(t) = -4y(t)
\end{equation}
where
\[
y(t) = -\Im\displaystyle \int_{\mathbb{R}^{N}} \overline{u}(x,t)\left( \nabla u(x,t) \cdot x\right)\, dx
\]
and
\begin{equation}\label{virialid}
	V''(t) = -4y'(t) = 8 \|\nabla u(t)\|^2 + \frac{4\lambda_1(Np_1 + 2b_1)}{p_1 +2 } \, \|u(t)\|_{L_{b_1}^{p_1+2}}^{p_1+2} + \frac{4\lambda_2(Np_2 + 2b_2)}{p_2 +2 } \, \|u(t)\|_{L_{b_2}^{p_2+2}}^{p_2+2}.
\end{equation}

Hence, it is sufficient to show that
\begin{equation}\label{ineqblow}
	y'(t) \geq c\|\nabla u\|^2 > 0
\end{equation}
for a small positive constant \(c\).

Suppose that ocours (a). From \eqref{virialid} and \eqref{energy}, it follows
\begin{equation*}
\begin{split}
y'(t) = & -2  \|\nabla u(t)\|^2 -  \frac{\lambda_1(Np_1 + 2b_1)}{p_1 +2 }  \|u(t)\|_{L_{b_1}^{p_1+2}}^{p_1+2} - \frac{\lambda_2(Np_2 + 2b_2)}{p_2 +2 } + \|u(t)\|_{L_{b_2}^{p_2+2}}^{p_2+2} +(Np_2 +2b_2) E(u(t)) \\
& - (Np_2 +2b_2)  E(u(t))\\
\geq & \frac{Np_2 -4 + 2b_2}{2}\|\nabla u(t)\|^2 + \frac{\lambda_1}{p_1+2}[N(p_2-p_1) + 2(b_2 - b_1)]\|\nabla u(t)\|_{L_{b_1}^{p_1+2}}^{p_1+2}  \geq \frac{Np_2 -4 + 2b_2}{2}\|\nabla u(t)\|^2,
\end{split}
\end{equation*}
where in the first inequality we use the hypothesis $E(u(t)) <0$ and in the last inequality we use the conditions under the parameters $\lambda_1$, $\lambda_2$, $b_1$, $b_2$, $p_1$ and $p_2$. Thus, \eqref{ineqblow} follows with $c=  \frac{Np_2 -4 + 2b_2}{2}$.\\

Suppose that condition (a) and (b) holds. From \eqref{virialid} and \eqref{energy}, it follows

\begin{equation*}
	\begin{split}
		y'(t) = & -2  \|\nabla u(t)\|^2 -  \frac{\lambda_1(Np_1 + 2b_1)}{p_1 +2 }  \|u(t)\|_{L_{b_1}^{p_1+2}}^{p_1+2} - \frac{\lambda_2(Np_2 + 2b_2)}{p_2 +2 } + \|u(t)\|_{L_{b_2}^{p_2+2}}^{p_2+2} +(Np_2 +2b_2) E(u(t)) \\
		& - (Np_2 +2b_2)  E(u(t)) \\
		\geq & \frac{Np_2 -4 + 2b_2}{2}\|\nabla u(t)\|^2 + \frac{\lambda_1}{p_1+2}[N(p_2-p_1) + 2(b_2 - b_1)]\|\nabla u(t)\|_{L_{b_1}^{p_1+2}}^{p_1+2}  \geq \frac{Np_2 -4 + 2b_2}{2}\|\nabla u(t)\|^2,
	\end{split}
\end{equation*}

where in the first inequality we use the hypothesis \(E(u(t)) < 0\), and in the last inequality, we rely on the conditions involving the parameters \(\lambda_1\), \(\lambda_2\), \(b_1\), \(b_2\), \(p_1\), and \(p_2\). Thus, \eqref{ineqblow} follows with \(c = \frac{Np_2 - 4 + 2b_2}{2}\).

Now, if conditions (c) and (d) hold, then from \eqref{virialid}, we have
\begin{equation*}
	\begin{split}
		y'(t) = & -2  \|\nabla u(t)\|^2 -  \frac{\lambda_1(Np_1 + 2b_1)}{p_1 +2 }  \|u(t)\|_{L_{b_1}^{p_1+2}}^{p_1+2} - \frac{\lambda_2(Np_2 + 2b_2)}{p_2 +2 } \|u(t)\|_{L_{b_2}^{p_2+2}}^{p_2+2} \\
		= &  -2  \|\nabla u(t)\|^2 +  (Np_1 + 2b_1) \left( \frac{1}{2} \|\nabla u(t)\|^2 + \frac{\lambda_2}{p_2 + 2}\|u(t)\|_{L_{b_2}^{p_2+2}}^{p_2+2} - E(u(t)) \right) - \frac{\lambda_2(Np_2 + 2b_2)}{p_2 +2 } \|u(t)\|_{L_{b_2}^{p_2+2}}^{p_2+2} \\
		\geq & \frac{Np_1 -4 + 2b_1}{2}\|u(t)\|^2 + \frac{\lambda_2}{p_2+2}[N(p_1-p_2) + 2(b_1 - b_2)]\|u(t)\|_{L_{b_2}^{p_2+2}}^{p_2+2}  \geq \frac{Np_1 -4 + 2b_1}{2}\|u(t)\|^2.
	\end{split}
\end{equation*}

Therefore, we conclude that \eqref{ineqblow} holds with \(c = \frac{Np_1 -4 + 2b_1}{2}\).

Finally, consider the case where condition (e) holds. Since \(p_2 > \frac{4 - 2b_2}{N}\), we can find a sufficiently small \(\varepsilon\) such that \(p_2 > \frac{2(2 - b_2 + \varepsilon)}{N}\). It is immediate that \(\theta := \frac{2(2 + \varepsilon)}{Np_2 + 2b_2} < 1\). Therefore, by adding and subtracting \(\frac{\lambda_2(Np_2 - 2b_2)\theta}{p_2 + 2}\) to \(y'(t)\) and using the energy expression, we obtain

\begin{equation*}
	\begin{split}
		y'(t) = & -2  \|\nabla u(t)\|^2 -  \frac{\lambda_1(Np_1 + 2b_1)}{p_1 + 2}  \|u(t)\|_{L_{b_1}^{p_1+2}}^{p_1+2} - \frac{\lambda_2(Np_2 + 2b_2)(1 - \theta)}{p_2 + 2} \|u(t)\|_{L_{b_2}^{p_2+2}}^{p_2+2} \\
		& - \frac{\lambda_2(Np_2 + 2b_2) \theta }{p_2 + 2} \|u(t)\|_{L_{b_2}^{p_2+2}}^{p_2+2} \\
		\geq & -2  \|\nabla u(t)\|^2 -  \frac{\lambda_1(Np_1 + 2b_1) \theta}{p_1 + 2}  \|u(t)\|_{L_{b_1}^{p_1+2}}^{p_1+2}  - \frac{\lambda_2(Np_2 + 2b_2)(1 - \theta)}{p_2 + 2} \|u(t)\|_{L_{b_2}^{p_2+2}}^{p_2+2} \\
		& + (Np_2 + 2b_2) \theta \left[ \frac{1}{2} \| \nabla u\|^2 + \frac{\lambda_1}{p_1 + 2} \| u \|_{L_{b_1}^{p_1+2}}^{p_1+2} - E(u_0) \right] \\
		\geq & \left(- 2 + \frac{(Np_2 + 2b_2)\theta}{2} \right)\|\nabla u\|^2 + \frac{\lambda_1 \theta}{p_1+2}\left[ (Np_2+2b_2) - (Np_1+2b_1) \right] \,  \|u(t)\|_{L_{b_1}^{p_1+2}}^{p_1+2} - (Np_2 + 2b_2) \theta E(u_0) \\
		& - \frac{\lambda_2(Np_2 + 2b_2)(1- \theta )}{p_2 +2 } \|u(t)\|_{L_{b_2}^{p_2+2}}^{p_2+2} \\
		= & \varepsilon \,  \| \nabla u\|^2  + \frac{\lambda_1 \theta}{p_1+2} \left[ (N(p_2-p_1) - 2(b_2 - b_1) \right] \,  \|u(t)\|_{L_{b_1}^{p_1+2}}^{p_1+2} - (Np_2 + 2b_2) \theta E(u_0) \\
		& - \frac{\lambda_2(Np_2 + 2b_2)(1- \theta )}{p_2 +2 } \|u(t)\|_{L_{b_2}^{p_2+2}}^{p_2+2}.
	\end{split}
\end{equation*}

By Proposition \ref{bweightednorm}, we have
\begin{equation*}
	\begin{split}
		\frac{|\lambda_1|\theta}{p_1+2} \left[ N(p_2 - p_1) + 2(b_2 - b_1) \right]\, \|u(t)\|_{L_{b_1}^{p_1+2}}^{p_1+2} \leq & \frac{|\lambda_1| \theta}{p_1+2} \left[ N(p_2 - p_1) + 2(b_2 - b_1) \right]\eta \|u(t)\|_{L_{b_2}^{p_2+2}}^{p_2+2} \\
		& + \frac{|\lambda_1| \theta}{p_1+2} \left[ N(p_2 - p_1) + 2(b_2 - b_1) \right] C(\eta, M(u_0))
	\end{split}
\end{equation*}
for any \(\eta > 0\). Choosing \(\eta\) sufficiently small such that
\[
\frac{|\lambda_1| \theta}{p_1+2} \left[ N(p_2 - p_1) + 2(b_2 - b_1) \right] \eta \leq \frac{|\lambda_2|(Np_2 + 2b_2)(1- \theta )}{\lambda_1 (p_2 +2)}
\]
results in
\begin{equation*}
	\begin{split}
		y'(t) \geq \varepsilon \, \| \nabla u\|^2 - C(\lambda_i, p_i, b_i, N, M(u_0)) - (Np_2 + 2b_2) \theta E(u_0)
	\end{split}
\end{equation*}
for \(i=1,2\). As long as
\[
(Np_2 + 2b_2) \theta E(u_0) + C(\lambda_i, p_i, b_i, N, M(u_0)) < 0
\]
we have
\[
y'(t) \geq \varepsilon \, \| \nabla u\|^2.
\]
\end{proof}

The last lemma ensures the proof of Theorem \ref{blwupresult} as we will see below.

\begin{proof}[Proof of Theorem \ref{blwupresult}]
	Using the notation from the previous lemma, we know that by hypothesis, \(y(0) = y_0 > 0\). Furthermore, from \eqref{ineqblow}, we deduce that \(y(t) > y(0) > 0\) for all times \(t > 0\). Additionally, by Hölder's inequality, for all \(t \in (-T_{\min}, T_{\max})\):
	
	\[
	y(t) \leq \|xu(t)\| \|\nabla u(t)\|
	\]
	
	Consequently, we have
	
	\begin{equation}\label{EDOBLOw}
		\|\nabla u(t)\| \geq \frac{y(t)}{\|xu(t)\|}
	\end{equation}
	
	Combining \eqref{ineqblow} and \eqref{EDOBLOw}, we obtain a first-order ordinary differential equation for \(y\)
	
	\begin{equation*}
		\begin{cases}
			y'(t) \geq c \frac{y^2(t)}{\|xu(t)\|}, \\
			y(0) = y_0 > 0.
		\end{cases}
	\end{equation*}
	
	This implies that there exists a time \(T^{*} \leq \frac{\|xu_0\|^2}{cy_{(0)}}\) such that:
	
	\[
	\lim_{t \rightarrow T^{*}} \|\nabla u \| = \infty.
	\]
\end{proof}


\begin{thebibliography}{99}
	
	\bibitem{bourgainglobal}
	J. Bourgain,
	\emph{Global Solutions of Nonlinear Schrödinger Equations},
	American Mathematical Society Colloquium Publications, Vol. 46, American Mathematical Society, 1999.
	
	\bibitem{BFG23}
	J. Bellazzini, L. Forcella, and V. Georgiev,
	\emph{Ground state energy threshold and blow-up for NLS with competing nonlinearities},
	Ann. Sc. Norm. Super. Pisa Cl. Sci., vol. XXIV, pp. 955–988, 2023.
	
	\bibitem{Campos21}
	L. Campos,
	\emph{Scattering of radial solutions to the inhomogeneous nonlinear Schrödinger equation},
	Nonlinear Anal., vol. 202, pp. 112–118, 2021.
	
	\bibitem{CCF2022}
	L. Campos, S. Correia, and L. G. Farah,
	\emph{Some well-posedness and ill-posedness results for the INLS equation},
	arXiv:2210.07060 [math.AP], 2022.
	
	
	Existence of stable standing waves for the nonlinear Schrödinger equation with the Hardy potential.(English summary).
	
	\bibitem{Cao2023}
	L. Cao,
	\emph{Existence of stable standing waves for the nonlinear Schrödinger equation with the Hardy potential},
	Discrete Contin. Dyn. Syst. Ser. B28, no.2, pp. 1342–1366, 2023.
	
	\bibitem{CFG23}
	M. Cardoso, L. G. Farah, and C. Guzmán,
	\emph{On well-posedness and concentration of blow-up solutions for the intercritical inhomogeneous NLS equation},
	J. Dyn. Diff. Equations, vol. 35, pp. 1337–1367, 2023.
	
	\bibitem{cazenave03}
	T. Cazenave,
	\emph{Semilinear Schrödinger Equations},
	Courant Lecture Notes in Mathematics, American Mathematical Society, 2003.
	
	\bibitem{CMZ16}
	X. Cheng, C. Miao, and L. Zhao,
	\emph{Global well-posedness and scattering for nonlinear Schrödinger equations with combined nonlinearities in the radial case},
	J. Differential Equations, vol. 261, pp. 2881–2934, 2016.
	
	\bibitem{CL19}
	Y. Cho, and M. Lee,
	\emph{On the orbital stability of inhomogeneous nonlinear Schrödinger equations with singular potential},
	Bull. Korean Math. Soc., vol. 56, no. 6, pp. 1601–1615, 2019.
	
	\bibitem{CKSTT2008}
	J. Colliander, M. Keel, G. Staffilani, and T. Tao,
	\emph{Global well-posedness and scattering for the energy-critical nonlinear Schrödinger equation in $\mathbb{R}^3$},
	Annals of Mathematics, vol. 167, pp. 767–865, 2008.
	
	\bibitem{Dinh21}
	V. D. Dinh,
	\emph{Scattering theory in weighted $L^2$ space for a class of the defocusing inhomogeneous nonlinear Schrödinger equation},
	Advances in Pure and Applied Mathematics, vol. 12, pp. 38–72, 2021.
	
	\bibitem{farah16}
	L. G. Farah,
	\emph{Global well-posedness and blow-up on the energy space for the inhomogeneous nonlinear Schrödinger equation},
	J. Evol. Equ., vol. 16, pp. 193–208, 2016.
	
	\bibitem{fibich2015nonlinear}
	G. Fibich,
	\emph{The Nonlinear Schrödinger Equation: Singular Solutions and Optical Collapse},
	Applied Mathematical Sciences, Springer International Publishing, 2015.
	
	\bibitem{Gammal2000}
	A. Gammal, T. Frederico, L. Tomio, and P. Chomaz,
	\emph{Atomic Bose-Einstein condensation with three-body interactions and collective excitations},
	J. Phys. B: At. Mol. Opt. Phys., vol. 33, no. 19, pp. 4053, 2000.
	
	\bibitem{G12}
	F. Genoud,
	\emph{An inhomogeneous, $L^2$-critical, nonlinear Schrödinger equation},
	Anal. Anwend., vol. 31, pp. 283–290, 2012.
	
	\bibitem{GS08}
	F. Genoud, and C. A. Stuart,
	\emph{Schrödinger equations with a spatially decaying nonlinearity: existence and stability of standing waves},
	Discrete Contin. Dyn. Syst., vol. 21, pp. 137–186, 2008.
	
	\bibitem{GMS2023}
    T. Gou, M. Majdouband and T. Saanouni,
	\emph{Non-Radial NLS Equation with Competing Inhomogeneous Nonlinearities: Ground States, Blow-Up and Scattering"},
	arXiv:2311.12693v2 [math.AP], 2023.
	
	\bibitem{Guz17}
	C. Guzmán,
	\emph{On well-posedness for the inhomogeneous nonlinear Schrödinger equation},
	Nonlinear Anal. Real World Appl., vol. 37, pp. 249–286, 2017.
	
	\bibitem{PL}
	F. Linares, and G. Ponce,
	\emph{Introduction to nonlinear dispersive equations},
	Springer, 2009.
	
	\bibitem{liu94}
	C. S. Liu, and V. K. Tripathi,
	\emph{Laser guiding in an axially nonuniform plasma channel},
	Phys. Plasmas, vol. 1, no. 9, pp. 3100–3103, 1994.
	
	\bibitem{Luo23}
	Y. Luo,
	\emph{On the sharp scattering threshold for the mass-energy double critical nonlinear Schrödinger equation via double track profile decomposition},
	Ann. Inst. H. Poincaré Anal. Non Linéaire, 2023.
	
	\bibitem{RV2007}
	E. Ryckman, and M. Visan,
	\emph{Global well-posedness and scattering for the defocusing energy-critical nonlinear Schrödinger equation in $\mathbb{R}^{1+4}$},
	Amer. J. Math., vol. 129, pp. 1–60, 2007.
	
	\bibitem{2000Prama}
	T. Singh Gill,
	\emph{Optical guiding of laser beam in nonuniform plasma},
	Pramana, vol. 55, no. 5, pp. 835–842, 2000.
	
	\bibitem{sulem1999nonlinear}
	C. Sulem, and P. Sulem,
	\emph{The Nonlinear Schrödinger Equation: Self-Focusing and Wave Collapse},
	Applied Mathematical Sciences, Springer New York, 1999.
	
	\bibitem{taononlinear}
	T. Tao,
	\emph{Nonlinear dispersive equations: local and global analysis},
	CBMS Regional Conference Series in Mathematics, vol. 106, Published for the Conference Board of the Mathematical Sciences, Washington, DC. American Mathematical Society, 2006.
	
	\bibitem{TV2005}
	T. Tao, and M. Visan,
	\emph{Stability of energy-critical nonlinear Schrödinger equations in high dimensions},
	Electron. J. Differential Equations, vol. 118, pp. 1–28, 2005.
	
	\bibitem{TVZ2007}
	T. Tao, M. Visan, and X. Zhang,
	\emph{The nonlinear Schrödinger equations with combined power-type nonlinearities},
	Commun. Partial Differential Equations, 2007.

\bibitem{Zhang2024}
M. Zhang, J. Zhang
\emph{On the blow-up for nonlinear Schrödinger equation with inhomogeneous perturbation},
Math. Meth. in the App. Sci., 2023.

	
\end{thebibliography}
\end{document}